\newcommand{\doublearrow}[2]{\overset{#1}{\underset{#2}{\rightrightarrows}}}
\newcommand{\xydr}{\ar@<-.5ex>[r] \ar@<.5ex>[r]}
\newcommand{\xydd}{\ar@<-.5ex>[d] \ar@<.5ex>[d]}
\mathchardef\ordinarycolon\mathcode`\:
\newcommand{\mbred}[1]{{\textcolor{red}
{#1}}}
\newcommand{\red}[1]{{\textcolor{red}
{#1}}}
\newcommand{\ZG}{\mathbb Z G }
\newcommand{\Z}{\mathbb{Z}}
\newcommand{\N}{\mathbb{N}}
\newcommand{\R}{\mathbb{R}}
\newcommand{\rr}{\mathcal{R}}
\newcommand{\stabzero}{\textnormal{st0}}
\newcommand{\stabone}{\textnormal{st1}}
\newcommand{\Rcal}{\mathcal{R}}
\newcommand{\calR}{\mathcal{R}}
\newcommand{\gl}{\textnormal{GL}}
\newcommand{\el}{\textnormal{El}}
\newcommand{\As}{\mathcal{A^{\Box}}}
\newcommand{\Bs}{\mathcal{B^{\Box}}}
\newcommand{\ea}{E_A}
\newcommand{\ear}{E(A,\calR)}
\newcommand{\enilar}{E_{\textnormal{Nil}}(A,\calR)}
\newcommand{\ebr}{E(B,\calR )}
\newcommand{\orbglr}{\textnormal{Orb}_{\textnormal{GL}(\calR )}}
\newcommand{\orbglrt}{\textnormal{Orb}_{\textnormal{GL}(\calR[t] )}}
\newcommand{\nil}{\textnormal{Nil}_0}
\newcommand{\orbelr}{\textnormal{Orb}_{\textnormal{EL}(\calR )}}
\newcommand{\orbelrt}{\textnormal{Orb}_{\textnormal{EL}(\calR[t] )}}
\newcommand{\go}{\textnormal{Orb}_{\textnormal{GL}(\calR )}}
\newcommand{\eo}{\textnormal{Orb}_{\textnormal{El}(\calR )}}
\newcommand{\seo}{\textnormal{ElSt} }
\newcommand{\seor}{\textnormal{ElSt}_{\calR}}
\newcommand{\seozg}{\textnormal{ElSt}_{\Z G}}
\newcommand{\seort}{\textnormal{ElSt}_{\calR [t]}}
\newcommand{\eogo}{\textnormal{Orb}_{\textnormal{El}(\calR ),\textnormal{Gl}(\calR )}}
\newcommand{\nk}{NK}
\newcommand{\NK}{NK}
\newcommand{\sk}{SK}
\newcommand{\coker}{\textnormal{coker}}
\DeclareMathOperator{\GL}{GL}
\DeclareMathOperator{\SSE}{SSE}
\DeclareMathOperator{\SE}{SE}
\DeclareMathOperator{\SL}{SL}
\DeclareMathOperator{\EL}{El}
\DeclareMathOperator{\El}{El}
\numberwithin{equation}{section}
\newtheorem{theorem}[equation]{Theorem}
\newtheorem{proposition}[equation]{Proposition}
\newtheorem{corollary}[equation]{Corollary}
\newtheorem{conjecture}[equation]{Conjecture}
\newtheorem{lemma}[equation]{Lemma}
\newtheorem{prop}[equation]{Proposition}
\theoremstyle{definition}
\newtheorem{notation}[equation]{Notational convention}
\newtheorem{definition}[equation]{Definition}
\newtheorem{elstabprob}[equation]{Elementary Stabilizer Problem}
\theoremstyle{remark}
\newtheorem{remark}[equation]{Remark}
\numberwithin{equation}{section}
\begin{document}
\setstcolor{red}
\keywords{shift equivalence; Nil group;
  localization; elementary equivalence }
\subjclass[2010]{Primary 19M05; Secondary 37B10}

\title{Strong shift equivalence and algebraic K-theory}

\author{Mike Boyle and Scott Schmieding}

\begin{abstract} For a semiring $\Rcal$, the relations of
shift equivalence over $\Rcal$ (SE-$\Rcal$) and strong shift equivalence
over $\Rcal$  (SSE-$\Rcal$) are natural equivalence relations
on square matrices over $\Rcal$, important  for symbolic dynamics.
When $\calR$ is a ring, we prove that the refinement of
SE-$\Rcal$ by SSE-$\Rcal$, in the SE-$\calR$ class of
  a matrix $A$, is classified by the
quotient $\nk_1(\calR )/\ear$ of the
algebraic K-theory group
$\nk_1 (\calR)$. Here, $\ear$ is a certain
  stabilizer group, which we prove must vanish if $A$ is nilpotent
or invertible. For this, we first show for any square
matrix $A$ over $\Rcal$ that the refinement of its
SE-$\Rcal$ class into  SSE-$\Rcal$ classes corresponds
precisely to the refinement of
the $\GL (\Rcal [t])$ equivalence class of $I-tA$
into $\el (\Rcal [t])$ equivalence classes. We then
show this refinement is in bijective correspondence
with $\nk_1 (\calR )/\ear$.
For a general ring $\calR$ and $A$ invertible,
the proof that $\ear$ is trivial
rests on  a theorem of Neeman and Ranicki
 on the K-theory of  noncommutative localizations.
For $\calR$ commutative, we show
$\cup_A \ear = NSK_1(\calR)$; the proof rests on
Nenashev's presentation of $K_1$ of an exact
category.
\end{abstract}
\maketitle

\tableofcontents

\section{Introduction}\label{sec:intro}
Let $\Rcal$ (always assumed to contain 0 and 1)
be a subset of a ring.
Let $A,B$ be square matrices over $\Rcal$ (not necessarily of
equal size).  Matrices $A$ and $B$ over $\Rcal$ are
{\it elementary strong shift equivalent}
over $\Rcal$ (ESSE-$\Rcal$)
if there exist matrices $U,V$ over $\Rcal$ such that
$A=UV$ and $B=VU$. $A$
 and $B$ are  {\it strong shift equivalent}
over $\Rcal$ (SSE-$\Rcal$) if they are connected by a chain of
elementary strong shift equivalences. $A$ and $B$ are
\emph{shift equivalent} over $\Rcal$ (SE-$\Rcal$) if there exist matrices
$U,V$ over $\Rcal$ and $\ell $ in $\mathbb N$ such that the
following hold:
\begin{align*}
A^{\ell} = UV  \ \qquad  B^{\ell} &=VU \\
AU=UB         \qquad  VA &=BV \quad .
\end{align*}
If $A,B$ are  SSE-$\Rcal$, then they are SE-$\Rcal$.

For symbolic dynamics, these are central
relations,  introduced
by Williams \cite{LindMarcus1995,Williams73};
%(primarily for
%$\calR=\[ 0,1\}$ or  $\mathbb Z_+$)
they may be familiar from other settings. For example, idempotent matrices $p, q$ over a unital $C^{*}$-algebra $\mathcal{A}$ are Murray-von Neumann equivalent if and only if $p$ and $q$ are SSE-$\Rcal$ (this can be deduced from \cite[Lemma A.4.4]{HigsenRoeBook}).
%(they make perfect sense for endomorphisms
%in a category in place of matrices).
We give background and motivation in Section \ref{backgroundsec}.
Briefly: shift equivalence is very useful for symbolic dynamics and
reasonably tractable, with several algebraic characterizations when
$\Rcal$ is a  ring (see Theorem \ref{selisttheorem}).
% (For example, if $\Rcal$ is a ring,
%then matrices $A,B$ over $\calR$ are SE-$\Rcal$ if and only if the
%$\calR[t]$-modules $\coker(I-tA)$, $\coker(I-tB)$ are
%isomorphic.)
Strong shift equivalence is a more fundamental
and mysterious relation.

There is an obvious  basic question: assuming  $\calR$ is a ring,
does $\SE-\calR$ imply $\SSE-\calR$?
The answer was shown to be
 yes for $\calR=\Z$ (see Williams' proof
in \cite{Williams1992} on his work from the 70s);
for $\calR$ a principal ideal domain (Effros, 1981, \cite{Effros1981});
and
for $\calR$ a Dedekind domain (Boyle-Handelman, 1993 \cite{BH93}). There were no counterexamples, and no results after \cite{BH93}.
In his 1999 Bulletin AMS survey,
  Wagoner formally posed the ``Algebraic Shift Equivalence
  Problem'' \cite[Problem 2.14]{Wagoner99}:
for what rings $\Lambda$ does SE over $\Lambda$
imply SSE over $\Lambda$?
We will show that for $\calR$ a ring,
in a given $\SE-\calR$ class the refinement of $\SE-\calR$
by $\SSE-\calR$ is captured exactly by
a certain quotient group of the algebraic K-theory group $\nk_1(\calR )$.

%, summarizing Theorem \ref{aplusn}).

%The central result behind this theorem is the
%following correspondence..
%What leads to this connection?
%In the ``positive K-theory'' framework
%for symbolic dynamics, it is natural to
%move from a given matrix $A$ over $\calR$
%to a new matrix $I-tA$ over
%the polynomial ring
%$\calR [t]$ (or just the matrix
%$I-A$ over $\calR$). Invariants of
%interest do not distinguish
%$I-tA$ and $(I-tA) \oplus I_n$, for
%any $I_n$ (the $n\times n$ identity matrix).
%

%\red{The introduction from here has been rewritten,
%without colors.}

From here, let $\Rcal$ be a ring, and
$\mathfrak M_n(\Rcal)$ the $n\times n$
matrices over $\Rcal$. With the maps
 $p_n\colon \mathfrak M_n(\calR)\to \mathfrak M_{n+1}(\calR)$
defined by $M\mapsto M\oplus 1$, we form a direct limit
of semigroups $\mathfrak M(\calR)$,
with a finite matrix $M$ sent to $M_{\stabone}$ in  $\mathfrak M(\calR)$.
%\red{(Here and elsewhere I changed
%    subscripts, e.g. $M_{\infty}$ to $M_{\stabone}$, for some
%    reasons.)}
The maps $p_n$ are the
maps which construct
$\gl(\Rcal )$ and the elementary
group $\textnormal{El}(\Rcal )$
as direct limits.  A $\GL_n(\calR)$ equivalence
$UMV=M'$ gives a $\GL_{n+1}(\calR)$ equivalence
$p_n(U) p_n(M) p_n(V)=p_n(M')$, so
$\GL(\Rcal )$ equivalence  and
$\el(\Rcal )$ equivalence of the objects
$M_{\stabone}$ is well defined.   When we say that
two finite matrices $M$ and $M'$
are $\GL(\Rcal )$ equivalent
or $\el(\Rcal )$ equivalent, we mean
that the relation holds for
$M_{\stabone}$ and $(M')_{\stabone}$,
i.e. $UM_{\stabone}V = (M')_{\stabone}$ for
$U,V \in GL(\calR)$ or $U,V \in \el(\calR)$.
It is natural to identify
$M_{\stabone} $ with an $\N \times \N$ matrix
(see Sec. \ref{backgroundsec}).
%equal to the identity outside finitely many
%entries.

For finite square matrices $A,B$ over $\Rcal$,
we will show
\begin{align} \label{seisgl}
A \text{ and } B \text{ are  SE-}\calR  &\iff  I-tA\text{ and }  I-tB \text{ are GL}(\calR[t])
\text{ equivalent} \\  \label{sseisel}
A\text{ and } B \text{ are  SSE-}\calR  &\iff  I-tA\text{ and }  I-tB \text{ are El}(\calR[t])
\text{ equivalent}
\end{align}
The proof of \eqref{seisgl} in Section  \ref{secfitting}
uses an old stabilization result of Fitting,
following Warfield (see Theorem \ref{WarfieldLemma}).
In Section \ref{sseaselsec},
\eqref{sseisel} is proved.
The formulation
of the correspondence in
Theorem \ref{finecentral}
as induced by a map
 $ I-A \mapsto \As$ is simple and natural.
The matrix arguments of the proof, however, are
nonstandard for $K$-theory,  and
a $K$-theorist may find the details
barbaric:
nonfunctorial,  complicated
and (worst of all?)
bereft of exact sequences. For better
and for worse, this is the proof we have.

% To state the second main result (*),
%use a little notation.
%For a square matrix $M$ over $\Rcal$, let
%$\go (M)$ be the set of square matrices $M'$
%over $\Rcal$ which are
%$\GL(R[t})$ equivalent to $M$.
%Let $\eo (M)$ be the set of square matrices $M'$
%over $\Rcal$ which are
%$\el(R[t})$ equivalent to $M$.
%Define
%\[
%\eogo (M)= \{\eo (M'): M' \in \go (I-A)\} \ .
%\]
Given a ring $\Rcal$ and  a square matrix $M$ over $\Rcal$,  we
define associated sets of square matrices over $\Rcal$:
%\begin{align*}
%\orbglr (M) &=
%\{ UM_{\stabone}V: \ \{U,V\} \subset \GL(\Rcal )\ \}\\
%\orbelr (M) &=
%\{ UM_{\stabone}V: \ \{U,V\} \subset \el(\Rcal )\ \}
%\end{align*}
\begin{align*}
\orbglr (M)&=
\{ M': M' \text{ is } \GL(\calR) \text{ equivalent to }M \}\\
\orbelr (M) &=
\{ M': M' \text{ is }\, \el (\calR) \text{ equivalent to }M \}
\end{align*}
Now suppose $A$ is any square matrix over $\Rcal$.
Then
 $\orbglrt (I-tA)$ is a disjoint union of
the sets $\orbelrt (I-tB)$ such that $I-tB$ is
$ \GL(\calR[t])$  equivalent to $I-tA$ and $B$ has entries in $\Rcal$.
Define the elementary stabilizer
  \[
 \ear = \{ U\in \GL (\calR [t])\colon
  U {\textnormal{Orb}_{\textnormal{El}(\calR[t] )}} (I-tA) \subset {\textnormal{Orb}_{\textnormal{El}(\calR[t] )}} (I-tA)\} \  .
  \]

Because $\el (\calR[t])\subset \ear$, we may also regard
  $\ear$ as a subgroup of $K_1(\calR[t])$; there,
  $\ear \subset \nk_1(\calR)$.
 %(  \eqref{elstabisinnk1}).
   (We will recall definitions in Section \ref{backgroundsec}.)

%\newcommand{\orbglr} =
%\textnormal{Orb}_{\textnormal{GL}(\calR )}}
%\newcommand{\orbglrt}{\textnormal{Orb}_{\textnormal{GL}(\calR[t] )}}
%\newcommand{\orbelr}{\textnormal{Orb}_{\textnormal{GL}(\calR )}}
%\newcommand{\orbglert}{\textnormal{Orb}_{\textnormal{GL}(\calR[t] )}}
We will show that
there is a  bijection
\begin{align}\label{main2}
\nk_1(\calR)/\ear  \ &\ \to \
\{\orbelrt (I-tB): I-tB \in \orbglrt (I-tA)\} \\ \notag
[I-tN]\ & \ \mapsto\  \orbelrt (I-t(A\oplus N)) \ .
\end{align}
%\begin{align}\label{main2}
%\textnormal{Nil}_0(\calR)\ &\ \to \
%\{\orbelrt (I-tB): I-tB \in \orbglrt (I-tA)\} \\ \notag
%[N]\ & \ \mapsto\  \orbelrt (I-t(A\oplus N)) \ .
%\end{align}
In \eqref{main2}, $B$ is a square matrix over $\Rcal$;
$N$ is a nilpotent matrix over $\Rcal$;
and $[I-tN]$ is the class in $\nk_1(\calR)$
containing $I-tN$.

For a square matrix $B$ over $\calR$, let  $
[B]_{SSE-\calR}$ denote the set of matrices SSE-$\calR$  to $B$;
similarly define
$[B]_{SE-\calR}$.
From  \eqref{seisgl} ,\eqref{sseisel} and \eqref{main2}, for
any square matrix $A$ over $\calR$
we get  a well-defined bijection (Theorem \ref{aplusn2}),
%\begin{align}
%\nil (\calR)\  & \to \  \{[B]_{SSE-\calR} \hspace{.05in} |
%\hspace{.05in} [A]_{SE-\calR} = [B]_{SE-\calR}\} \\ \notag
%[N] \ &\mapsto \  [A \oplus N]_{SSE-\calR} \ .
%\end{align}
\begin{align}
\nk_1(\calR)/\ear \  & \to \  \{[B]_{SSE-\calR} \hspace{.05in} |
\hspace{.05in} [A]_{SE-\calR} = [B]_{SE-\calR}\} \\ \notag
[I-tN] \ &\mapsto \  [A \oplus N]_{SSE-\calR} \ .
\end{align}
%\red{This correspondence can also be described in terms
%of $\nil (\calR)$ (Theorem \ref{aplusn2}).}
It is easy to check $\ear$ is trivial if $A$ is nilpotent.
There are rings with nontrivial $\nk_1(\calR)$
  such that  $\ear $ is trivial for every $A$
  (Remark \ref{goodrings}).
We will show $\ear$ is trivial if
$A$ is SE-$\calR$ to a matrix which
is  invertible or idempotent (Theorem
\ref{classesinOmega}),
    and in some other cases when $\calR$ is the
    integral group ring of a finite abelian group
    (Cor. \ref{localizationexample}).

The key to the triviality of $\ear$ for invertible or idempotent
$A$ (important for applications) is
Theorem \ref{fredholmk1injective}, which
shows that the map
$K_{1}(\calR[t]) \to K_{1}(\Omega_{+}^{-1}\calR[t])$
induced by a certain Cohn localization
$\calR[t] \to \Omega_{+}^{-1}\calR[t]$ is injective.
In the case $\Rcal$ is commutative, this can be
handled with a standard localization exact sequence.
But for the generality of all rings $\Rcal$, the proof
depends on the
 work  of  Neeman and Ranicki
%(following Schofield)
on the
$K$-theory of noncommutative localization.
For general $\Rcal$, they extended a
localization  finite exact sequence of
Schofield by a single term
(see Theorem \ref{NeemanRanicki6term}).
We need that extra term
to prove Theorem \ref{fredholmk1injective}.

The elementary stabilizer $\ear$ is not always trivial.
For $\calR$ commutative,
we show (Theorem \ref{nonvanishingstabilizers}) that
\[
 \bigcup_{A\in \mathcal M(\mathcal R )} \ear = NSK_{1}(\mathcal{R})
\]
where
$NSK_{1}(\mathcal{R}) =\{ [M] \in NK_{1}(\mathcal{R})\colon \det
(M)=1\}$. If $\calR$ is a reduced ring (one with no nonzero
nilpotent element), then $NSK_{1}(\mathcal{R}) =NK_{1}(\mathcal{R}) $.
%$$\{[G] \in NK_{1}(\mathcal{R}) \hspace{.02in} | \hspace{.02in}
%\det(G)=1\}.$$
The proof uses  Fitting's stabilization result
(Theorem \ref{WarfieldLemma});  Quillen's
localization sequence in K-theory for the localization of
$\calR[t]$ at the reverse monic polynomials; and
 Nenashev's characterization of $K_1$ of an exact category.
We leave open the problem of finding a more complete 
understanding of the elementary stabilizer (see  
Conjecture \ref{unionofesconjecture} and 
Problem \ref{elstabprob}).

 At the end of Section \ref{secstabilizer}, we
provide some context for
the statement and proof of
Theorem \ref{classesinOmega}.
In Section \ref{nilsec}, we note
  that for nilpotent matrices $N,N'$ over $\calR$,
  $[N]=[N']$ in
  $\text{Nil}_0(\rr)$ if and only if $N$ and
  $N'$ are SSE-$\calR$.

%the triviality of
%$\seort (I-tA) $ in $K_1(\calR[t])$.

%Without this result, we can establish a bijection between (i)
%$\SSE-\calR $ classes of matrices in the
%$\SE -\calR$ class of a given matrix $A$,
%and (ii) a quotient of $\nk_1 (\calR)$ (which a priori might
%depend on $A$). With the technical result,
%we can show the quotient is equal to  $\nk_1 (\calR)$
%(and is therefore independent of $A$).

This paper is entirely about matrices over rings
and related $K$-theory. However, strong motivation
for the paper comes from
symbolic dynamics (where the paper
already has a serious application \cite{BoSc3}),
as indicated in the last two subsections
of Section \ref{backgroundsec}.
The original arXiv post \cite{BoSc1}
of our paper
contained an error (see Remark \ref{meaculpa}).
The implications of that error for the applications
is discussed in Remark \ref{implicationsoferror}.

%At the end of  Section \ref{nilsec} we add a few remarks
%about SSE and $K$-theory.

%We describe some applications and
%relations with algebraic K-theory.

{\it Acknowledgements.}
We thank
Jonathan Rosenberg
for all the K-theory education and consultation.
We thank Wolfgang Steimle for the
content of Remark \ref{failedproofs}
and we thank  David Handelman
for completing the proof of
Proposition \ref{nontrivialkseoexamples}.
%\red{I dropped the first Ranicki thanks.}
%We thank
%Andrew Ranicki for guiding us to
%\cite{RanickiNeeman},  which freed
%our results  from the confines of
%rings with
%invariant basis number.
We
are grateful to A. Ranicki, J. Rosenberg and
C. Weibel for their books
\cite{Ranickibook,Rosenberg1994,WeibelBook},
without which we might not have written this paper.
Mike Boyle is happy to acknowledge support
during this work from
 the Danish National Research Foundation, through
the Centre for Symmetry and Deformation (DNRF92);
and from the
NSERC Discovery grants of David Handelman and of Thierry Giordano,
at the University of Ottawa.
Finally, we are grateful to the referee for a
  very detailed and thorough review, which has
  improved the presentation and accuracy of the paper.
%and
%from the Pacific Institute for the
%Mathematical Sciences and ? .}

\section{Background and applications}\label{backgroundsec}

In this section, we give basic definitions we need for $K$-theory,
shift equivalence and  strong shift equivalence. Then we
give a little background from symbolic dynamics (not needed for
proofs), and  summarize motivations and
applications.

\begin{notation}\label{abuse}
Let $M_{\stabone}$ be defined as in the introduction
from a finite square  matrix $M$. We regard
$M_{\stabone}$ as an
$\N \times \N$ matrix which has $M$ as its upper left
corner and is otherwise equal to the identity matrix.
In the set of $\N \times \N$ matrices,
$I$ denotes the infinite identity matrix. Thus
the direct limit semigroup
$\mathfrak M(\calR)$
 may be identified with
 the set of all $\N \times \N$ matrices
over $\calR$ equal to $I$ outside finitely many entries.
 To avoid a heavier notation, we sometimes
suppress the subscript $_{\stabone}$.
For example, if $M$ is a finite square matrix
and $U$ in $\GL(\calR)$, then
$UM$ means $UM_{\stabone}$.
When we say finite square
matrices $M,M'$ are $\gl(\Rcal )$
equivalent, we mean
 there are $U,V$ in
$\gl(\Rcal )$ such that
$UM_{\stabone} V=(M')_{\stabone} $.

%use the same
%letter to refer to a finite matrix and its infinite version.
%For example, given a finite matrix \red{$I-A$,} we let
%$I-A$ denote the matrix \red{$(I-A) _{\stabone}$.}
%% and an $\N \times \N$
%matrix $I-A$ always means a matrix $I-A_{\stabone}$.
\end{notation}
\begin{remark} \label{remarkableabuse}
If in the introduction for $p_n$ we used $M\mapsto M\oplus 0$ rather
than $M\mapsto M\oplus 1$, we would produce a more standard stable
version of $M$, which we denote $M_{\stabzero}$.
 Consistent with the
matrix interpretation of $M_{\stabone}$, we regard $M_{\stabzero}$
as an $\N \times \N$ matrix which has upper left corner $M$ and
has other entries zero. With this interpretation,
$(I_n-A)_{\stabone} = I -A_{{\stabzero}}$ .
\end{remark}

{\bf Some basic K-theory.} \label{subsec:ktheory}
Throughout this paper, a ring means a ring with unit. Unless mentioned otherwise, for $\calR$ a ring,
an $\calR$-module $M$ is a right $\calR$-module
($r: m \mapsto mr$), and matrix multiplication of vectors is
  multiplication of column vectors.
Everything in the paper would remain true if instead we used left
$\calR$ modules and
multiplication of row vectors.

We briefly review some definitions and notation.
We recommend the books
\cite{Rosenberg1994,WeibelBook} for
an introduction to algebraic K-theory.
\\
\indent Let $\Rcal$ be a ring. The group
$K_{1} (\Rcal)$ is defined by $K_{1} (\Rcal) = \GL(\Rcal)/\EL(\Rcal)$,
where $\GL(\Rcal) = \varinjlim \GL_{n}(\Rcal)$ and $\EL (\Rcal) =
\varinjlim \EL_{n}(\Rcal)$, with $\EL_{n}(\Rcal)$ the group
generated by basic elementary matrices of
size $n$ (those equal to $I$ except possibly in a single
offdiagonal entry).
If $\calR$ is commutative, then
  $\EL (\Rcal) \subset \SL(\Rcal) := \varinjlim \SL_{n}(\Rcal)$,
  and $\sk_1(\Rcal)$ denotes $\{ [M] \in K_{1}(\Rcal): \det M = 1\}$.
%\mbred{How should $\EL$ look ... right now it is a macro.}
As above, we  use $\N \times \N$ matrices
as a notation for these direct limits.
%e.g. $U$ in  $\GL_{n}(\Rcal)$ becomes $U\oplus I_{\infty}$ in
%$\GL (\calR)$.
%If $\Rcal$ is also commutative, the determinant map
%$\det:\Rcal \to \Rcal^{\times}$ is a split surjection,
%and gives a decomposition $K_{1}(\Rcal) \cong SK_{1}(\Rcal)
%\oplus \Rcal^{\times}$, where $SK_{1}(\Rcal)=\ker(\det)$, and
%$\Rcal^{\times}$ denotes the group of units in $\Rcal$.\\
The group $NK_{1}(\Rcal)$ is
the kernel of the homomorphism
$K_{1}(\Rcal[t]) \to K_{1}(\Rcal)$ induced by
the ring homomorphism
$\Rcal[t] \stackrel{t \to 0}\to \Rcal$. The
exact sequence $0 \to t\Rcal[t] \to \Rcal[t] \stackrel{t \to 0}
\to \Rcal \to 0$ is split on the right, giving a decomposition
$K_{1}(\Rcal[t]) \cong NK_{1}(\Rcal) \oplus K_{1}(\Rcal)$.

%{\bf Preparations for describing $\mathbf{NK_1(R)}$.}

%If
%$\Rcal$ is regular, then $NK_{1}(\Rcal)=0$
%orphisms(see \cite{Rosenberg1994} or \cite{WeibelBook}).
%Any Dedekind domain is regular.
%If $R$ is reduced (has no non-trivial nilpotents),
%then one also has $NK_{1}(\Rcal) \subset SK_{1}(\Rcal[t])$.
%

For a category $\mathcal{P}$ with exact sequences and small
skeleton $\mathcal{P}_{0}$, $K_{0}(\mathcal{P})$ is defined to be the free abelian group on $Obj(\mathcal{P}_{0})$, modulo the relations:\\
(1) $[P_{1}] = [P_{2}]$ if $P_{1}$ and $P_{2}$ are isomorphic in $\mathcal{P}$.\\
(2) $[P] = [P_{1}] + [P_{2}]$ if there is a short exact sequence in $\mathcal{P}$
$$ 0 \to P_{1} \to P \to P_{2} \to 0 $$
For a ring $\Rcal$, the nil category \textbf{Nil($\Rcal$)} is the
exact category whose objects are pairs $(P,f)$, where $P$ is an object
in \textbf{Proj($\Rcal$)},
the category of finitely generated projective $\calR$-modules,
and $f$ is a nilpotent endomorphism of $P$.
A morphism $h \colon (P,f)\to (Q,g)$ in
  \textbf{Nil($\Rcal$)}
 is a  morphism $h \colon P \to Q$ in
\textbf{Proj($\Rcal$)} such that \\
\[ \xymatrix{
& P \ar[r]^h \ar[d]^{f} &  Q \ar[d]^{g} \\
& P \ar[r]^h & Q }\\
\]
commutes. There is a split surjective functor \textbf{Nil($\Rcal)$}$
\to $\textbf{Proj $\Rcal$} defined by sending $(P,f)$ to $P$, and we
let $\nil (\Rcal)$ denote the
kernel of $K_{0}$(\textbf{Nil($\Rcal$)})$ \to K_{0}(\Rcal)$, giving a
decomposition $K_{0}$(\textbf{Nil($\Rcal)$}) = $K_{0}(\Rcal) \oplus
\nil (\Rcal)$.

%{\bf $\mathbf{NK_1(R)}$ and $\mathbf{Nil_0(R)}$.}
Every element of
$\nk_{1}(\Rcal)$ contains a matrix
of the form $I-tN$, with $N$ a nilpotent
matrix with entries in $\Rcal$.
It is a classic result that
the map $[I-tN] \to [N]$ defines an isomorphism $\nk_{1}(\Rcal) \to
\nil (\Rcal)$.
A theorem of Farrell  \cite{Farrell1977} shows that when $\nk_{1}(\Rcal)
\ne 0$, $\NK_{1}(\Rcal)$ is not finitely generated
as a group.
If $G$ is a finite group of order $n$, then
$\NK_{1}(\Z G)$ is trivial if $n$ is square-free
\cite{Harmon1987}, but in general may not vanish
\cite{Weibel2009}.

To appreciate that $NK_{1}(\calR )$ is often trivial,
recall that a  (left) Noetherian ring is regular if every finitely generated (left) $\Rcal$-module $M$
has a finite-type projective resolution, i.e. there
exists an exact sequence
$$0 \to P_{n} \to \cdots \to P_{0} \to M \to 0$$
with $P_{i}$ projective for all $i$.
These Noetherian regular rings form a large class, containing
rings of finite global dimension (fields,
principal ideal domains, Dedekind domains ...).
If $\calR$ is regular, then the polynomial ring
$\calR[x_1, \dots ,x_n]$
is regular.
If $\calR$ is a Noetherian regular ring, then
$NK_{1}(\calR )$ is trivial.

{\bf Cohn Localization.} \label{subsec:noncommlocal}
Cohn localization is a fundamental tool
for the study of  noncommutative rings.

\indent Let $\Sigma$ be a collection of matrices over
a ring $\calR$, $\Sigma
= \{A_{i}\}$. The Cohn localization of $\calR$ with respect to
$\Sigma$ consists of a ring
(denoted $\Sigma^{-1}\calR$) with
 a ring homomorphism $\phi\colon \calR \to \Sigma^{-1}\calR$
satisfying two properties:
\begin{enumerate}
\item
For every matrix $A$ in $\Sigma$, $\phi(A)$ is invertible in $\Sigma^{-1}\calR$.
\item
If $\gamma \colon \Rcal \to S$ is any other
ring homomorphism such that $\gamma(A)$ is invertible over $S$
for all $A \in \Sigma$, then
there is a (unique) ring homomorphism
$\delta \colon \Sigma^{-1}\calR \to S$ such that
$\gamma = \phi \circ \delta$.
%$\gamma$ factors through $\phi$:\\
%\[ \xymatrix{
%& \calR \ar[r]^{\phi} \ar[dr]^{\gamma} & \Sigma^{-1}\calR \ar[d] \\
%&  & S }\\
%\]
\end{enumerate}
The ring $\Sigma^{-1}\calR$ is thus a universal $\Sigma$-inverting
ring.
With the usual nontriviality assumption for a ring, $0\neq 1$, there might be no ring over which
the matrices in $\Sigma$ become invertible. Therefore, so that $\Sigma^{-1}\calR$ is always defined, the degenerate possibility $\Sigma^{-1}\calR = \{0\}$ is allowed.
Then $\Sigma^{-1}\calR$ exists
and is essentially unique (see \cite{Schofield}
or \cite{CohnBook}).  \\
\indent The Cohn localization can also be constructed given
a collection of morphisms between finitely generated projective
$\calR$-modules in an analogous fashion. Given such a collection
$\Sigma$, call a ring morphism $\calR \to \mathcal{S}$
$\Sigma$-inverting if
$\sigma \otimes 1\colon P \otimes_{\calR}  \mathcal{S}
\to Q \otimes_{\calR} \mathcal{S}$ is an $\mathcal{S}$-module
isomorphism for every $\sigma \colon P \to Q$ in $\Sigma$. Then the
noncommutative localization is a ring $\Sigma^{-1}\calR$
with a $\Sigma$-inverting map $\calR \to \Sigma^{-1}\calR$ such that $\Sigma^{-1}\calR$ is universal with respect to $\Sigma$-inverting maps, analogous to $(2)$ above. \\
\indent More details regarding the general construction of $\Sigma^{-1}\calR$ may be found in 7.2 of \cite{CohnBook}. \\
\indent Given $\calR$, define $\Omega_{+}$ to be the collection of $\calR[t]$-module homomorphisms satisfying the following:
\begin{enumerate}
\item
Each $f \in \Omega_{+}$ is an $\calR[t]$-module homomorphism $f \colon P \to Q$ between some finitely generated $\calR[t]$-modules $P,Q$.
\item For every $f \in \Omega_{+}$, $f$ is injective, and $\coker(f)$ is a finitely generated projective $\calR$-module.
\end{enumerate}
Following \cite{Ranickibook}, we refer to $\Omega_{+}$ as the set of Fredholm homomorphisms. The localization $\Omega_{+}^{-1}\calR[t]$ has the property that the map $\calR[t] \to \Omega_{+}^{-1}\calR[t]$ is injective \cite[Prop. 10.7]{Ranickibook}.\\
\indent One can alternatively construct the Fredholm localization
using matrices. Let $\Omega_{+}^{\textnormal{mat}}$ denote the set
of
matrices $A$ over $\calR[t]$ such that (with $A$ $m \times n$)
the induced map on free $\calR[t]$-modules
$\calR[t]^{n} \stackrel{A}\to \calR[t]^{m}$ is injective and
$\coker(A)$ is a finitely generated projective
$\calR$-module. We refer to $\Omega_{+}^{\textnormal{mat}}$ as the set of Fredholm
matrices. That the localizations $\Omega_{+}^{-1}\calR[t]$ and
 $(\Omega_{+}^{\textnormal{mat}})^{-1}\calR[t]$ coincide is easy to check. We may occasionally abuse notation and write $\Omega_{+}$ in place of $\Omega_{+}^{\textnormal{mat}}$ when it is clear that matrices are being considered. \\
\indent An alternative construction of $\Omega_{+}^{-1}\calR[t]$ may
be described as follows. Let $\Omega_{M}$ denote the set of monic
matrices over $\calR [t]$,  i.e. the square matrices
%$A = A_{0} + A_{1}t + \cdots A_{d}t^{d}$
$A=\sum_{i=0}^d A_it^i$
with the $A_i$  matrices over $\calR$
%with the highest degree term equals the identity, so
such that  $A_{d}  $ is the identity matrix.
Note that $\Omega_{M} \subset \Omega_{+}$.
In fact, the two localizations coincide
\cite[Prop. 10.7]{Ranickibook}:
$\Omega_{+}^{-1}\calR[t] = \Omega_{M}^{-1}\calR[t]$.
\\

{\bf Shift equivalence} \label{subsec:shiftequivalence}
%\indent
%\cite{Boyle1991}
%\red{Mike: what is the reference here for?}
Two square matrices $A,B$ over $\Rcal$ are called shift equivalent over $\Rcal$ (SE-$\Rcal$) if there exists a positive integer $l$ (the lag) and matrices $R,S$ over $\Rcal$ such that
$$RS=A^{l}, SR=B^{l}, RB=AR, BS=SA.$$
While shift equivalence is an equivalence relation, lag one shift equivalence is not. The transitive closure of lag one shift equivalence is called strong shift equivalence, so two square matrices $A,B$ over $\Rcal$ are strong shift equivalent over $\Rcal$ (SSE-$\Rcal$) if there is a chain of lag one shift equivalences between them.
{\bf
Strong shift equivalence} \label{subsec:strongshiftequivalence}
Let $\Rcal$ be a ring.
The nature of SSE-$\Rcal$ as a kind of stabilized
version of similarity over $\Rcal$
  is shown by the following characterization
from \cite{MallerShub1985}.
The relation SSE-$\Rcal$ is generated by
two relations:\\
 (1) Similarity over $\Rcal$: $A=U^{-1}BU$.\\
 (2) ``Zero extensions'':
\[
\begin{pmatrix} A & U \\ 0&0
\end{pmatrix}
 \sim
A
\sim
\begin{pmatrix} A & 0 \\ U&0
\end{pmatrix}
\]
Similarity over $\calR$ implies SSE-$\calR$,
  since $A=U^{-1}BU$ gives $A=VU$, $B=UV$ with $V=U^{-1}B$.
  Each type of zero extension respects SSE-$\Rcal$, because
  \begin{align*}
    A= \begin{pmatrix} A & U \end{pmatrix}
    \begin{pmatrix} I \\ 0 \end{pmatrix}\ ,&
  \qquad
  \begin{pmatrix} I \\ 0 \end{pmatrix} \begin{pmatrix} A & U \end{pmatrix} = \begin{pmatrix}A & U \\ 0 & 0 \end{pmatrix} \\
A = \begin{pmatrix} I & 0 \end{pmatrix} \begin{pmatrix} A \\ U \end{pmatrix}\ ,&
\qquad
\begin{pmatrix}A \\ U \end{pmatrix} \begin{pmatrix} I & 0 \end{pmatrix} = \begin{pmatrix} A & 0 \\ U & 0 \end{pmatrix}\ .
\end{align*}
\iffalse
\mbred{$(1)$ and $(2)$ together imply SSE-$\calR$, since $A=U^{-1}BU$ gives $A=VU$, $B=UV$ with $V=U^{-1}B$, and
  $$A= \begin{pmatrix} A & U \end{pmatrix}\begin{pmatrix} I \\ 0 \end{pmatrix},
  \qquad
  \begin{pmatrix} I \\ 0 \end{pmatrix} \begin{pmatrix} A & U \end{pmatrix} = \begin{pmatrix}A & U \\ 0 & 0 \end{pmatrix}$$
while
$$A = \begin{pmatrix} I & 0 \end{pmatrix} \begin{pmatrix} A \\ U \end{pmatrix},
\qquad
\begin{pmatrix}A \\ U \end{pmatrix} \begin{pmatrix} I & 0 \end{pmatrix} = \begin{pmatrix} A & 0 \\ U & 0 \end{pmatrix}$$}
\fi
Conversely, given $A=UV,\ B=VU$
we have a similarity:
\begin{equation} \label{ssesim}
\begin{pmatrix} I & 0 \\ V & I
\end{pmatrix}
\begin{pmatrix} A & U \\ 0 & 0
\end{pmatrix}
\ =\
\begin{pmatrix} 0 & U \\ 0 & B
\end{pmatrix}
\begin{pmatrix} I & 0 \\ V & I
\end{pmatrix}
\end{equation}

{\bf Antecedents.}
The connection between
$\nil (\calR)$ and
$\SSE -\calR$ grew for us out of the
``positive K-theory''
\cite{B02posk,BW04} approach to
 classification problems
in symbolic dynamics. That approach grew out of earlier work,
especially \cite{bgmy,S7,S6}, and  Wagoner's background
in algebraic K-theory.  Some classification
problems in symbolic dynamics can be presented,
for a suitable ordered ring $\calR$,
as the problem of classifying
square matrices $A,B$ over $\calR$
up to  $\SSE-\calR_+$.
In the most important example, for the classification of shifts of finite
type,  Williams used
$\calR = \Z_+$ \cite{Williams73}.
For the classification of group extensions of shifts of finite
type by a finite group $G$ for example, Parry used
$\calR = \Z_+G$ \cite{BS05, BoSc2}.
For a group ring $\calR =\Z G$,
the relation $\SSE-\Z_+G$ of $A$ and $B$ is equivalent to
 ``positive''  equivalence
of the matrices $I-tA$ and $I-tB$ \cite[Theorem 7.2]{BW04}.
Here a positive equivalence is a certain
type of
$\EL (\ZG[t])$ equivalence
$U (I-tA)V = I-tB$ (see \cite{B02posk,BW04,BoSc2}
for definitions and explanation).
This by analogy raises the question for rings answered by
\eqref{sseisel}.

%For an integral group ring $\Z G$ with positive set $\Z_+G$,
%a positive matrices $A,B$ over $\calR$
%exists if and only if there is a positive
%equivalence of the matrices $I-tA$ and $I-tB$. }
%The $\el(\calR)$ equivalence
%given by setting $t=1$
%gives  very strong
%invariants of flow equivalence
%\cite{BS05}.}

The elementary stabilizer as a subgroup of $K_1(\calR)$
appeared in a related context in \cite{BS05}
(see Remark \ref{festabBS05remark}).

{\bf Motivation and applications.}
The results in this paper have been used to
answer (in the negative)  a question of
Parry \cite[Sec. 4.4]{pst2009} about a possible extension of
Liv\v{s}ic theory to finite group extensions of
shifts of finite type, and have significantly clarified the
structure of their algebraic invariants \cite{BoSc2}.
They have also been used to show that
two old conjectures about the
algebraic structure of nonnegative
matrices are equivalent \cite{BoSc3}.

\begin{remark}\label{implicationsoferror}
The papers \cite{BoSc3,BoSc2} appealed to
the incorrect claim in our original arXiv post \cite{BoSc1}
that $\ear$ is always trivial
(see Remark \ref{meaculpa}). However, the arguments  of
  \cite{BoSc3} go through unchanged, with
  appropriate reference to
  Theorem \ref{aplusn} in place of
  \cite[Theorem 2.1]{BoSc3}.
  In \cite{BoSc2}, after replacing
  Theorem 2.2(2) with
  a reference to  Theorem \ref{aplusn} below,
  the theorems and proofs remain correct, with
  one amendment:
  in Theorem 6.4 of \cite{BoSc2},
  there should be added the assumption that
  the elementary stabilizer
$ E(A,\Z G)$
(see (Defn. \ref{eardefn}))
  is trivial.  By Theorem \ref{classesinOmega},
for every finite group $G$,
 $ E(A,\Z G)$  is trivial for many
matrices $A$, e.g. for every $A$ invertible
over $\Z G$ (also note Cor. \ref{localizationexample}).
Thus the revised  Theorem 6.4
  still provides for every finite group $G$ with
nontrivial $NK_1(\Z G)$ many cases in
  which the answer to Parry's
  question is decisively no.
  \end{remark}

In \cite{BKR2013}, a three part program
for understanding SSE for positive real
matrices was proposed. One part,
understanding the refinement of
SSE by SE for subrings of $\R$,
is addressed by the current paper.

One ``application'' of a result describing
the refinement of SE by SSE is that one
acquires constraints on what proofs
might possibly work. For example,
the main result of \cite{BKR2013}
had a hypothesis of SSE (not SE) of two
matrices over a subring of $\R$. We
now know that hypothesis is not an
artifact of the proof.

The classification problem for shifts of finite type
is a central open problem for symbolic dynamics.
Wagoner used  $K_2$ of the dual numbers as
an ingredient for producing a counterexample
to Williams' conjecture that SE-$\Z_+$  implies
SSE-$\Z_+$, and suggested further possible
connection between the classification problem
and algebraic K-theory  \cite{Wagoner2000,
Wagoner2004}. The current paper
is, we hope, a step toward understanding that
connection.

\section{$K_1(\calR[t])\to K_1(\Omega^{-1}_{+}\calR[t])$ is injective} \label{injectivesec}

The main purpose of this section is to prove Theorem
\ref{fredholmk1injective}, which we need to prove
  Theorem \ref{trivialkseo}.
%But to establish Theorem \ref{aplusn2} for a completely
%general ring,  we rely on Theorem \ref{fredholmk1injective}.

\begin{theorem}
\label{fredholmk1injective}
  Let $\Omega_{+}$ denote the set of Fredholm
homomorphisms of finitely generated projective modules over $\calR[t]$. Then the natural map
$$K_{1}(\calR[t]) \to K_{1}(\Omega_{+}^{-1}\calR[t])$$
induced by $\calR[t] \to \Omega_{+}^{-1}\calR[t]$ is injective.
\end{theorem}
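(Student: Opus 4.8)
The plan is to deduce the injectivity of $K_1(\calR[t]) \to K_1(\Omega_+^{-1}\calR[t])$ from the Neeman–Ranicki localization sequence (Theorem \ref{NeemanRanicki6term}) applied to the Cohn localization $\calR[t] \to \Omega_+^{-1}\calR[t]$. Recall that by the material above, $\Omega_+^{-1}\calR[t] = \Omega_M^{-1}\calR[t]$, the localization inverting all monic matrices, and that $\calR[t] \to \Omega_+^{-1}\calR[t]$ is injective. The Neeman–Ranicki sequence, which extends Schofield's finite localization exact sequence by one term on the left, reads (in the relevant range)
\[
K_1(\calR[t]) \to K_1(\Omega_+^{-1}\calR[t]) \to K_0(\mathbb{H}) \to K_0(\calR[t]) \to K_0(\Omega_+^{-1}\calR[t]),
\]
where $\mathbb H$ is the category of $\Omega_+$-torsion modules — the finitely generated $\calR[t]$-modules admitting a finite resolution by the kernels/cokernels data of $\Omega_+$-maps; equivalently, here, $\calR[t]$-modules that are finitely generated projective over $\calR$. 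The extra term supplied by Neeman–Ranicki is precisely a surjection onto the kernel of $K_1(\calR[t]) \to K_1(\Omega_+^{-1}\calR[t])$ coming from $K_1$ of this torsion category (or, in their formulation, from a relative term), so the whole point is to show that contribution vanishes.

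First I would identify the torsion category $\mathbb H$ concretely: an $\Omega_+$-torsion module is a finitely generated $\calR[t]$-module $T$ which, as an $\calR$-module, is finitely generated projective; equivalently $T$ is a finitely generated projective $\calR$-module $P$ equipped with the $\calR$-linear endomorphism given by multiplication by $t$, and since $T$ is $\Omega_M$-torsion this endomorphism must be "integral" — killed by a monic polynomial. So $\mathbb H$ is equivalent to the category of pairs $(P,f)$ with $P \in \textbf{Proj}(\calR)$ and $f \in \End_{\calR}(P)$ satisfying a monic relation. Next I would produce a filtration argument: any such $(P,f)$ sits in a short exact sequence relating it to $(P,f)$ with $f$ replaced by a power-bounded / generalized-eigenspace piece, and by dévissage the interesting part is the nilpotent part, i.e. $\textbf{Nil}(\calR)$, plus a part where $f$ is invertible. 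The part where $f$ is invertible contributes, via the map to $K_1(\calR[t])$, only classes that become trivial — concretely, $(P,f)$ with $f$ invertible corresponds to $I - tf$ which is already invertible over $\calR[t]$ after the evident manipulation, hence dies. The remaining contribution is from $[I - tN]$ with $N$ nilpotent; but such a matrix is elementarily equivalent to $I$ over $\calR[t]$ (a nilpotent $I-tN$ can be reduced to the identity by elementary row and column operations, a standard fact also recorded implicitly in the $\nk_1 \cong \nil$ discussion), so its class in $K_1(\calR[t])$ is trivial. Therefore the connecting contribution to $\ker\big(K_1(\calR[t]) \to K_1(\Omega_+^{-1}\calR[t])\big)$ is zero, giving injectivity.

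The main obstacle will be making the previous paragraph rigorous in the full noncommutative generality, i.e. correctly invoking the Neeman–Ranicki theorem so that the term measuring the kernel really is computed by $K$-theory of $\mathbb H$ (their result is stated for noncommutative localizations that are "stably flat" or satisfy the hypotheses under which the localization sequence holds, and one must check $\Omega_+^{-1}\calR[t]$ qualifies — this is exactly where Schofield's / Neeman–Ranicki's hypotheses on Fredholm localizations of polynomial rings are used), and then carrying out the dévissage on $\mathbb H$ without commutativity, where "generalized eigenspace decomposition" is not literally available. The fix is to avoid eigenspace decompositions entirely and argue directly: every object of $\mathbb{H}$ is, up to the relations defining $K_0(\mathbb H)$, a sum of a piece killed by a power of $t$ and a piece on which $t$ acts invertibly (split off via the idempotent $\lim (t\text{-mult})^{n!}$ when $f$ satisfies a monic relation, working over the finite $\calR$-algebra generated by $f$), and then handle each summand's image in $K_1(\calR[t])$ as above. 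I expect the bookkeeping to identify the Neeman–Ranicki extra term with $\nil(\calR)$-type data, and the triviality of $[I-tN]$ in $K_1(\calR[t])$ — as opposed to its nontriviality in $NK_1$, which lives in $K_1(\calR[t])$ but is detected only relative to $t \mapsto 0$ — to be the conceptual crux that makes the kernel collapse.
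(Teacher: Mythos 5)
Your first paragraph is on the right track: the paper does invoke the Neeman--Ranicki six-term sequence for the Fredholm localization, does reduce to showing that the map out of the $K$-theory of the torsion category $\mathcal{E}$ into $K_1(\calR[t])$ vanishes, and does use the fact that every object of $\mathcal{E}$ is finitely generated projective as an $\calR$-module. (Your worry about stable flatness is moot; the six-term version needs no such hypothesis.) But the mechanism you propose for making that map vanish is broken in several places. Most seriously: your claim that $I-tN$ with $N$ nilpotent ``can be reduced to the identity by elementary row and column operations'' over $\calR[t]$, hence is trivial in $K_1(\calR[t])$, is exactly the assertion that $NK_1(\calR)=0$. The class $[I-tN]$ \emph{is} the image of $[N]$ under the isomorphism $\nil(\calR)\to NK_1(\calR)\subset K_1(\calR[t])$; there is no sense in which it is ``trivial in $K_1(\calR[t])$ but nontrivial in $NK_1$,'' since $NK_1(\calR)$ is a subgroup of $K_1(\calR[t])$. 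If your claim held, the main theorem of the paper would be vacuous. The companion claim that $(P,f)$ with $f$ invertible gives an $I-tf$ that is ``already invertible over $\calR[t]$'' is also false ($1-t$ is not a unit of $\calR[t]$). Finally, the group whose image must be killed is $K_1(\mathcal{E})$, whose elements are represented by \emph{automorphisms} of torsion modules, not by classes of torsion objects; a d\'evissage of objects into nilpotent and invertible pieces computes the wrong thing (classes of objects live in $K_0(\mathcal{E})$, which sits elsewhere in the sequence).

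The paper's actual argument avoids all case analysis. After showing $\mathcal{E}\subset X$, i.e.\ that $\eta(M):=M\otimes_{\calR}\calR[t]$ lands in $\mathcal{H}_1(\calR[t])$ for every torsion module $M$, one forms the characteristic short exact sequence of exact functors
$\eta \stackrel{\mathcal{F}}\rightarrowtail \eta \stackrel{\mathcal{G}}\twoheadrightarrow j$,
where $\mathcal{F}(M)=t-f_M$ and $j$ is the inclusion $\mathcal{E}\to\mathcal{H}_1(\calR[t])$. The Additivity Theorem then gives $K_n(\eta)=K_n(\eta)+K_n(j)$, hence $K_n(j)=0$ for all $n$, and injectivity follows from the localization sequence. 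The point is that one never needs to decide whether any individual class such as $[I-tN]$ is trivial; the functorial cancellation does all the work uniformly. If you want to salvage your approach, replace the d\'evissage and the false triviality claims with this additivity argument.
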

%\annotation{\red{If we are going to use Fredholm homomorphisms
%%    \ref{fredholmk1injective}, then something clear should be said about them
%  back where the Fredholm matrices were introduced.}}

\iffalse
Before proving Theorem \ref{fredholmk1injective} in the general case,
we give first a proof for the case $\calR$ is commutative, using a
standard
 long exact localization sequence in K-theory \eqref{comlongexact}.
The proof in the general case is a consequence of the
fundamental work of Neeman, Ranicki and Schofield.
In some places, we provide perhaps more
explanation than experts might need, in an effort to make the material
more widely accessible.
\fi
The proof  of Theorem \ref{fredholmk1injective} for
general $\calR$ requires us to delve into the proofs
behind the Neeman and Ranicki
results on the
$K$-theory of Cohn localizations. Before going to that more
difficult work,
we'll give the  (shorter) proof for the case that $\calR$ is commutative.
The proof for this case
uses the  standard K-theory localization exact sequence
\eqref{comlongexact}
with claims appealing to
standard references.
%and
%can be checked by references to standard statements.
After that, we will be better positioned to
understand (and appreciate) how the work
of Neeman and Ranicki fits in.
%In this section,
We provide more
explanation \and reference
than experts might need, in an effort to make the material
more widely accessible and easily checked.

%\emph{Proof of Theorem \ref{fredholmkinjective} when $\calR$ is commutative}:
%\subsection{The Commutative Case} \label{subsec:commutativeproof}
%\emph{In this \red{subsection}, $\calR$ is assumed to be commutative.}\\
%\label{subsec:commutativeproof}

\begin{center}{\bf The Commutative Case} \end{center}

In this subsection, $\calR$ is assumed to be commutative.
\begin{definition}
For a ring $\calR$, we consider the following exact categories:
\begin{enumerate}
\item
$\mathcal{H}_{1}(\calR)$ is the exact category
  whose objects are $\calR$-modules which have a resolution
of length $\le 1$ by finitely generated projective
  $\calR$-modules,
and whose morphisms are the $\calR$-module homomorphisms between
them.
\item
%$\calR$-modules
%which have a resolution
%\red{of length $\le 1$
% by finitely generated projective $\calR$-modules
%which
%are $S$-torsion (i.e. $sM = 0$ for some $s \in S$),
%with morphisms being $\calR$-module homomorphisms.
Given a multiplicatively closed set $S \subset \calR$ of non-zero
divisors, $\mathcal{H}_{1,S}(\Rcal)$ denotes the
full subcategory of $\mathcal{H}_{1}(\calR)$ whose objects
 are the objects of $\mathcal{H}_{1}(\calR)$ which are
$S$-torsion modules
(i.e. $sM = 0$ for some $s \in S$).

\end{enumerate}
\end{definition}
%\red{ I think it would be good to rewrite this paragraph to include
%explicitly the relation of finitely generated projective and
%finitely presented, and also Weibel's definition (or that could be in a footnote
%if it works better). }
Our use of the term exact category matches the standard one, as in
\cite[Definition II.7.0]{WeibelBook}. The notation
$\mathcal{H}_{1}(\calR)$ was chosen to match Weibel's K-Book
\cite[Definition II.7.7]{WeibelBook}.
It follows from the Resolution Theorem
  \cite[V.3.1]{WeibelBook}
that the inclusion of Proj$\calR$ into $\mathcal{H}_{1}(\calR)$
induces an isomorphism $\rho\colon K_{1}(\calR) \to K_{1}(\mathcal{H}_{1}(\calR))$.
%Obviously $\mathcal{H}_{1,S}(\calR)$ may be naturally
%identified as a subcategory of $\mathcal{H}_{1}(\calR)$.\\
The category $\mathcal{H}_{1,S}(\Rcal)$ appears in the standard long
exact sequence
%(which can be found in
\cite[V.7.1]{WeibelBook}
\begin{equation}
\label{comlongexact}
 \cdots \to K_{n}(\mathcal{H}_{1,S}(\calR)) \to K_{n}(\calR) \to
 K_{n}(S^{-1}\calR) \to \cdots
\end{equation}
which holds for the localization of a commutative ring $\calR$ at a multiplicatively closed set $S$ of
central non-zero divisors.\\
\indent Let $S_{+}$ denote the collection of monic polynomials in
$\calR[t]$, i.e. polynomials of the form $p(t) =
\sum_{i=0}^{n}a_{i}t^{i}$ with $a_{n} = 1$. The set $S_{+}$ is a
multiplicatively closed set
of non-zero divisors.
%, and we may consider the localization $S_{+}^{-1}\calR[t]$. Using
Replacing $\calR$ and $S$ in \eqref{comlongexact} with
$\calR[t]$ and $S_{+}$, we get the exact sequence
\begin{equation}\label{commutkseq}
\cdots \to K_{n}(\mathcal{H}_{1,S_{+}}(\calR[t])) \to K_{n}(\calR[t]) \to
 K_{n}(S_{+}^{-1}\calR[t]) \to \cdots
\end{equation}
To prove Theorem \ref{fredholmk1injective} for $\calR$ commutative, it
is now sufficient to show that the map
$\alpha\colon K_{1}(\mathcal{H}_{1,S_{+}}(\calR[t])) \to K_{1}(\calR[t])$ in
\eqref{commutkseq} is the zero map. This map factors through the map
induced by the inclusion functor (see the proof of \cite[V.7.1]{WeibelBook})
$j\colon\mathcal{H}_{1,S_{+}}(\calR[t]) \to
\mathcal{H}_{1}(\calR[t])$,
%\annotation{\red{Is there an explicit reference for ``induced by the
%inclusion functor'' here?}}
giving a diagram
\[ \xymatrix{
 K_{1}(\mathcal{H}_{1,S_{+}}(\calR[t])) \ar[r]^{K_{1}(j)} \ar[dr]^{\alpha} & K_{1}(\mathcal{H}_{1}(\calR[t])) \ar[d]^{\rho^{-1}} \\
 & K_{1}(\calR[t]) \\ }
\]
in which
%where
the vertical map is the inverse to the isomorphism
$K_{1}(\textnormal{Proj}\calR[t]) \to K_{1}(\mathcal{H}_{1}(\calR[t]))$
given by the Resolution Theorem.
%(\cite[V.3.1]{WeibelBook}).
It suffices then to show the map
$$K_{1}(j)\colon K_{1}(\mathcal{H}_{1,S_{+}}(\calR[t])) \to K_{1}(\mathcal{H}_{1}(\calR[t]))$$
is the zero map.

%$\coloneqq$ this commands requires loading the package mathtools

For $M$ in
$\mathcal{H}_{1,S_{+}}(\calR[t])$,
define $\eta(M)  =
M \otimes_{\calR}
\calR[t]$. The right $\calR[t]$-module $\eta (M)$ carries
no memory of the
original action of $t$ on $M$; as an $\calR$-module,
it is isomorphic to a direct sum of countably many copies of
$M$.
 A well known argument
%\annotation{Is there also a K-book ref for this
%well known argument? - unfortunately seems like no}
%(which may be found in
\cite[p. 441]{Grayson1977} shows that every object $M$ in
$\mathcal{H}_{1,S_{+}}(\calR[t])$ is finitely generated projective as
an $\calR$-module.
For $M$ in
$\mathcal{H}_{1,S_{+}}(\calR[t])$, it follows that
$\eta(M)$
is a finitely generated projective  $\calR[t]$-module,
and hence lies in
$\mathcal{H}_{1}(\calR[t])$.
% (in fact in Proj$\calR[t]$).
Let $\eta$ also denote the functor
$ \mathcal{H}_{1,S_{+}}(\calR[t]) \to
\mathcal{H}_{1}(\calR[t])$ which is
$M\mapsto \eta (M)$ on objects and
$f\mapsto f \otimes_{\calR} \textnormal{id}$ on morphisms.
%It is elementary that
The functor $\eta$ is exact, since $\calR[t]$ is free as an $\calR$-module.
%As follows:
%An exact sequence $0\to A \to B \to C \to 0$
%of-modules from $\mathcal{H}_{1}(\calR[t])$
%is exact considered as a sequence of $\calR$-modules.
%Since $\calR[t]$ is free as an $\calR$-module,
%the sequence $0\to \eta (A) \to \eta (B) \to \eta (C) \to 0$
%is exact considered as a sequence of $\calR$-modules.
%Therefore it is exact as a sequence of $\calR [t]$-modules.

\indent Given $M \in \mathcal{H}_{1,S_{+}}(\calR[t])$, let $f_{M}$
denote the endomorphism of $M$ induced by the $\calR[t]$-module
structure of $M$ (so, $f_{M}(x) = x \cdot t$).
\iffalse
From the discussion above we have the exact functor $\eta \colon
\mathcal{H}_{1,S_{+}}(\calR[t]) \to \mathcal{H}_{1}(\calR[t])$, and we
\fi
Let $\pi_M \colon \eta (M) \to M$ be the
 $\calR [t]$
module homomorphism such that
$\pi_M  \colon x\otimes t^i \mapsto (f_M)^i(x)$, for
$i$ in $\mathbb Z_+$.
Recall $j \colon \mathcal{H}_{1,S_{+}}(\calR[t]) \to
\mathcal{H}_{1}(\calR[t])$ denotes the inclusion functor.
For  morphisms $\psi \colon A\to B$ in
$\mathcal{H}_{1,S_{+}}(\calR[t])$,
we define transformations of
functors,
$\mathcal{F} \colon \eta \mapsto
\eta $
and $\mathcal{G} \colon \eta \mapsto j$,
by the following
commutative diagrams
of $\calR[t]$-module homomorphisms,
\[
\xymatrix{
\eta (A)
\ar[d]_{\mathcal F (A)}
\ar[r]^{\eta (\psi )}
&
\eta (B) \ar[d]^{\mathcal F (B)}
&
 \ar@{}[d]|{\ \ \text{=}}
&&
 A \otimes_{\calR} \calR[t]
\ar[r]^{\psi\otimes \text{id}}
\ar[d]_{\text{id} \otimes t-f_A\otimes \text{id}}
&
B \otimes_{\calR} \calR[t]
\ar[d]^{\text{id} \otimes t-f_B\otimes \text{id}}
\\
\eta (A)
\ar[r]^{\eta (\psi )}
&
\eta (B)
&& &
A \otimes_{\calR} \calR[t]
\ar[r]^{\psi\otimes \text{id}}
&
B \otimes_{\calR} \calR[t]
}
\]
and
\[
\xymatrix{
\eta (A)
\ar[d]_{\mathcal G (A)}
\ar[r]^{\eta (\psi )}
&
\eta (B) \ar[d]^{\mathcal G (B)}
&
 \ar@{}[d]|{\ \ \ \text{=}}
&
A \otimes_R \calR[t]
\ar[r]^{\psi\otimes \text{id}}
\ar[d]_{\pi_A}
&
B \otimes_{\calR}\calR[t]
\ar[d]^{\pi_B}
\\
j(A)
\ar[r]^{j (\psi )}
&
j (B)
 &&
A
\ar[r]^{\psi}
&
B
&
}
\]
Because the vertical arrows do not depend on $\psi$,
$\mathcal F$ and $\mathcal G$ are natural transformations.
%$\stackrel{t-f_{M}}\to \eta(M)$
Also $\eta \stackrel{\mathcal{F}}\rightarrowtail \eta
 \stackrel{\mathcal{G}}\twoheadrightarrow j$ is
a short exact sequence of
 functors since for any $M \in \mathcal{H}_{1,S_{+}}(\calR[t])$, the
 sequence
\[
\xymatrix{
0\ar[r] &  M\otimes_{\calR}\calR[t]\ar[r]^-{t-f_{M}} &
M\otimes_{\calR}\calR[t]\ar[r]^-{\pi_M} & M\ar[r] &0
}
\]
%$$0 \to M \otimes_{\calR}
%\calR[t] \stackrel{t-f_{M}}\rightarrowtail M
%\otimes_{\calR} \calR[t] \stackrel{\pi}\twoheadrightarrow
%M \to 0$$
(with $t-f_M\colon x\otimes t^i\mapsto
x\otimes t^{i+1}
-f_M(x) \otimes t^i $)
is exact (see e.g. \cite[p. 630]{BassBook}).
Let $K_{1}(\eta), K_{1}(j)$ denote the corresponding maps on
K-theory.
Because
$\eta \stackrel{\mathcal{F}}\rightarrowtail \eta
 \stackrel{\mathcal{G}}\twoheadrightarrow j$ is a short exact sequence
of exact functors of exact categories,
it follows from the Additivity Theorem
\cite[V.1.2]{WeibelBook} that $K_{1}(\eta) = K_{1}(\eta) + K_{1}(j)$. Thus $K_{1}(j)$ is the zero map. This concludes the proof of Theorem \ref{fredholmk1injective} in the case $\calR$ is commutative.
\begin{remark}
In the commutative case, the injectivity of $K_{1}(\calR[t]) \to K_{1}(S_{+}^{-1}\calR[t])$ may also be deduced using an argument of Grayson, found in \cite[Corollary 6]{Grayson1977}. As described in \cite[Corollary 6]{Grayson1977}, one constructs a Mayer-Vietoris sequence that splits up, analogous to the proof of the Fundamental Theorem concerning $K_{1}(\calR[t,t^{-1}])$ as found in \cite[p.20]{Grayson1976}.
\end{remark}

%\subsection{The General Case} \label{subsec:generalcaseproof}
\begin{center} {\bf The General Case} \end{center}

From here on,
 we do not assume the ring $\calR$ is commutative.
Before proving the general case of Theorem \ref{fredholmk1injective},
we present the
necessary material from
\cite{Neeman2,RanickiNeeman}.
%Recall that an $\calR$-module $M$ is finitely presented if there exists a short exact sequence
%$$0 \to K \to \calR^{n} \to M \to 0$$
%for some finitely generated $\calR$-module $K$.

\begin{definition} \label{Edefinition}
Let $\Sigma = \{\sigma_{i}\}$ be a collection of monomorphisms between
finitely generated projective $\Rcal$-modules.
The exact category $\mathcal{E}=\mathcal{E}(\Sigma )$ is defined to be the full subcategory of
$\mathcal{H}_{1}(\calR)$ determined by the following conditions:
\begin{enumerate}
\item For every $\sigma \in \Sigma$, \coker($\sigma$) lies in $\mathcal{E}$.
\item
If $0 \to M_{1} \to M_{2} \to M_{3} \to 0$  is a short exact
  sequence
of objects in $\mathcal{H}_{1}(\calR)$
such that  two of the objects $M_{1}, M_{2}, M_{3}$ lie in
$\mathcal{E}$, then so does the third.
\item $\mathcal{E}$ contains all direct summands of its objects.
\item $\mathcal{E}$ is minimal, subject to (1),(2) and (3).
%(i), (ii), (iii).
\end{enumerate}
\end{definition}
Following \cite{Neeman2}, we refer to the objects in the category
$\mathcal{E}(\Sigma)$ as $(\calR,\Sigma)$-torsion modules. When the
collection $\Sigma$ is clear, we may simply refer to $\mathcal{E}$
instead of $\mathcal{E}(\Sigma)$.
Note that in
 Definition \ref{Edefinition} we have used
$\mathcal{H}_{1}(\calR)$
in place of the
 category of all finitely presented $\calR$-modules of projective
 dimension $\le 1$ in
\cite{Neeman2}.
The two definitions are equivalent, because
the category $\mathcal{H}_{1}(\calR)$ and the category of finitely
presented modules of projective dimension $\le 1$ coincide: given a
finitely presented module $M$ of projective dimension less than or
equal to one, one may always construct a resolution of length one or
less by finitely generated projective modules \cite[4.1.6]{WeibelBook}.
 \\

The next theorem will not be used directly, but helps provide context for the torsion category
$\mathcal{E}$ defined above, so we include it.
%A proof can be found  \cite[Corollary 3.3]{Neeman2}.}
%\annotation{\red{For Theorem \ref{explained}, it seems we can just
%    quote
%the explicit statement in Neeman, yes?
%He has the proof reference right by that.}}
\begin{theorem}\label{explained}\cite[Proposition 0.7]{Neeman2}
  Assume for all $\sigma \in \Sigma$ that $\sigma$ is a monomorphism, and let $\mathcal{E} = \mathcal{E}(\Sigma)$ be as in
Definition \ref{Edefinition}.
  Then an $\Rcal$-module $M$ belongs to $\mathcal{E}$ iff\\
(i) $M$ is finitely presented with projective dimension $\le 1$, and \\
(ii) $\{\Sigma^{-1}\Rcal\} \otimes_{\calR} M$ and $Tor_{1}^{\calR}(\Sigma^{-1}\calR,M)$ both vanish.
\end{theorem}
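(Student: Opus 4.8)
The plan is to prove the two implications separately. Throughout, write $\mathcal{E}'$ for the full subcategory of $\mathcal{H}_{1}(\calR)$ consisting of those $M$ with $\Sigma^{-1}\calR \otimes_{\calR} M = 0$ and $\mathrm{Tor}_{1}^{\calR}(\Sigma^{-1}\calR, M) = 0$. Since, as observed just after Definition \ref{Edefinition}, $\mathcal{H}_{1}(\calR)$ is exactly the category of finitely presented $\calR$-modules of projective dimension $\le 1$, condition (i) is equivalent to $M \in \mathcal{H}_{1}(\calR)$, and the content of the statement is precisely that $\mathcal{E} = \mathcal{E}'$.

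For $\mathcal{E} \subseteq \mathcal{E}'$, I would check that $\mathcal{E}'$ satisfies conditions (1)--(3) of Definition \ref{Edefinition}; then $\mathcal{E} \subseteq \mathcal{E}'$ follows from the minimality of $\mathcal{E}$ among full subcategories of $\mathcal{H}_{1}(\calR)$ with those properties. For (1): given $\sigma \colon P \to Q$ in $\Sigma$, apply $-\otimes_{\calR}\Sigma^{-1}\calR$ to $0 \to P \to Q \to \coker\sigma \to 0$; since $P,Q$ are projective their higher Tor against $\Sigma^{-1}\calR$ vanishes and $\sigma \otimes 1$ is an isomorphism by the defining property of the localization, so both $\coker\sigma \otimes_{\calR}\Sigma^{-1}\calR$ and $\mathrm{Tor}_{1}^{\calR}(\Sigma^{-1}\calR,\coker\sigma)$ vanish. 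For (2): for a short exact sequence $0 \to M_{1} \to M_{2} \to M_{3} \to 0$ in $\mathcal{H}_{1}(\calR)$ the associated Tor sequence has at most six nonzero terms, because projective dimension $\le 1$ forces $\mathrm{Tor}_{2}^{\calR}(\Sigma^{-1}\calR, M_{i}) = 0$; a short case check on which two of $M_{1}, M_{2}, M_{3}$ lie in $\mathcal{E}'$ then forces the third into $\mathcal{E}'$. For (3): $-\otimes_{\calR}\Sigma^{-1}\calR$ and $\mathrm{Tor}_{1}^{\calR}(\Sigma^{-1}\calR,-)$ commute with direct sums, so a summand of an object of $\mathcal{E}'$ lies in $\mathcal{E}'$.

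For $\mathcal{E}' \subseteq \mathcal{E}$, take $M \in \mathcal{E}'$ and a length-one resolution $0 \to P_{1} \xrightarrow{f} P_{0} \to M \to 0$ by finitely generated projectives. The long exact Tor sequence for $-\otimes_{\calR}\Sigma^{-1}\calR$, together with the vanishing of $\Sigma^{-1}\calR \otimes_{\calR} M$ and $\mathrm{Tor}_{1}^{\calR}(\Sigma^{-1}\calR, M)$, shows that $f \otimes 1$ is an isomorphism over $\Sigma^{-1}\calR$; after stabilizing $f$ by an identity map on a suitable finitely generated projective we may assume $P_{0}, P_{1}$ are free of equal rank, so $f$ is a square matrix inverted by $\calR \to \Sigma^{-1}\calR$, with $\coker f$ differing from $M$ only by a free summand, which is harmless since $\mathcal{E}$ is closed under summands and $\coker(f \oplus \mathrm{id}) = \coker f$. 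I would then invoke the structure theorem for the matrices inverted by a Cohn localization (Cohn, Malcolmson): every such matrix is stably associated over $\calR$ to a matrix obtained from the members of $\Sigma$ and the invertible matrices by the operations of composition, direct sum, and block-triangular amalgamation $(A,B) \mapsto \mat{A}{C}{0}{B}$ (for suitable $C$). Finally, I would observe that the class of square maps of finitely generated projectives whose cokernel lies in $\mathcal{E}$ is closed under exactly these operations, using the short exact sequences $0 \to \coker g \to \coker(fg) \to \coker f \to 0$ for composable injections, $\coker(A \oplus B) \cong \coker A \oplus \coker B$, and $0 \to \coker B \to \coker\mat{A}{C}{0}{B} \to \coker A \to 0$, plus $\coker$ of an isomorphism being $0$; each step is handled by the $2$-out-of-$3$ property (2) and the summand property (3) of $\mathcal{E}$. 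Hence $M = \coker f \in \mathcal{E}$.

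The substantive obstacle is the reverse inclusion, and within it the structure theorem for matrices inverted under a Cohn localization, which must be quoted in precisely the form that feeds the cokernel computation; the surrounding manipulations with short exact sequences and the stabilization from projective to free modules are routine but must be carried out with care so as not to disturb the cokernel.
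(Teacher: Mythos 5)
The paper does not prove this statement: it is flagged as context that ``will not be used directly'' and is attributed wholesale to \cite[Proposition 0.7]{Neeman2}, with no argument supplied. Your blind proof therefore cannot be matched against an in-paper argument; what it has to match is Neeman's proof, which works at the level of derived/Waldhausen categories (essentially the $\mathbf{R}$, $\mathbf{R}'$, $C_b(\mathcal{E})$ machinery that appears later in this very paper, Lemma \ref{subcategorylemma}) rather than by a direct module-theoretic closure argument. Your forward inclusion $\mathcal{E}\subseteq\mathcal{E}'$ is correct and the right way to do it: show $\mathcal{E}'$ satisfies (1)--(3) of Definition \ref{Edefinition} and invoke minimality; the three Tor verifications are routine.

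The reverse inclusion is where there is a genuine gap, and it is not a quotable-lemma gap but a conceptual one. The ``structure theorem for matrices inverted by a Cohn localization'' you invoke is false as stated. The set of square matrices that $\calR\to\Sigma^{-1}\calR$ inverts is not the closure of $\Sigma\cup\GL(\calR)$ under composition, direct sum, and block-triangular amalgamation (up to stable association); it must also be \emph{factor-closed}, because if $AB$ becomes invertible over $\Sigma^{-1}\calR$ with $A,B$ square, then both $A$ and $B$ become invertible. Concretely, take $\calR=\Z$, $\Sigma=\{(4)\}$, so $\Sigma^{-1}\Z=\Z[1/4]=\Z[1/2]$. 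The matrix $(2)$ is inverted, yet any matrix built by your three operations from $(4)$ and $(\pm1)$ has determinant $\pm 4^k$, and stable association preserves determinant up to units, so $(2)$ is not stably associated to anything in your closure. Correspondingly $\coker(2)=\Z/2$ really does lie in $\mathcal{E}$ --- but only because $\Z/8\oplus\Z/2$ is an extension of $\Z/4$ by $\Z/4$, hence in $\mathcal{E}$ by (2), and $\Z/2$ is a \emph{direct summand} of it, hence in $\mathcal{E}$ by (3). Your scheme of tracking cokernels through the matrix operations never produces $\Z/2$, since all intermediate cokernels have size $4^k$; and the one closure condition you omitted, factor-closure, is exactly the one that refuses to go through the 2-out-of-3 property, because $0\to\coker g\to\coker(fg)\to\coker f\to 0$ with only $\coker(fg)\in\mathcal{E}$ gives you one of three terms, not two. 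This failure is the real content of the proposition and is why Neeman and Ranicki avoid the Cohn--Malcolmson saturation entirely in favor of the triangulated-category argument. (Separately, your stabilization to square matrices over free modules is not in general available: $P_0$ and $P_1$ are forced to become isomorphic over $\Sigma^{-1}\calR$, not stably isomorphic over $\calR$, so $[P_0]-[P_1]$ can be a nonzero class in $K_0(\calR)$ and the resolution cannot be made free-square. But this is a secondary issue.)
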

%\annotation{I changed without coloring a few trivialities in Theorem
%  \ref{explained} to math Neeman more closely.}

When $\calR$ is commutative and $S \subset \calR$ is a
multiplicatively closed set of non-zero-divisors, we let $\Omega_{S}$
denote the collection of all homomorphisms $f_{s} \colon \calR \to
\calR$ given by
$f_{s} \colon
x \mapsto xs$, with $s \in S$. In this
case the
%noncommutative
Cohn
localization $\Omega_{S}^{-1}\calR$ coincides with the standard
commutative localization $S^{-1}\calR$, and $\mathcal{E}(\Omega_{S})$
agrees with $\mathcal{H}_{1,S}(\calR)$. Indeed, in the commutative
case $S^{-1}\calR$ is flat, so we always have
$Tor_{1}^{\calR}(S^{-1}\calR,M) = 0$, and for a
nontrivial finitely generated $\calR$-module $M$, $S^{-1}\calR \otimes_{\calR} M = 0$ iff there exists $s \in S$ such that $Ms = 0$.\\

The following theorem is the main tool we use to prove the injectivity
of the map $K_{1}(\calR[t]) \to K_{1}(\Omega_{+}^{-1}\calR[t])$. The
sequence \ref{mainsequence}, without the leftmost map, was established by Schofield in
\cite{Schofield}. The extension to include the term $K_{1}(\mathcal{E}) \to
K_{1}(\calR)$, which is critical for our application,
is due to
Neeman and Ranicki;
Theorem \ref{NeemanRanicki6term}
is a combination of
\cite[Theorem 0.5]{Neeman2}
and the result stated as Theorem \ref{sixR} below.
%\annotation{Since Theorem \ref{NeemanRanicki6term} is the main
%theorem we appeal to, I tried  to give as precise a reference as
%possible. }
\begin{theorem}
\label{NeemanRanicki6term}
\cite[p. 789]{Neeman2}
Let $\calR$ be a ring, and $\Sigma$ be a collection of monomorphisms
between finitely-generated projective $\calR$-modules. Let
$\mathcal{E} = \mathcal{E}(\Sigma)$ denote the torsion category of
Definition 3.2. Then there is an exact sequence
%\annotation{\red{I changed some * references to numbered labels.}}
\begin{equation} \label{mainsequence}
K_{1}(\mathcal{E}) \to K_{1}(\Rcal) \to K_{1}(\Sigma^{-1}\Rcal) \to
K_{0}(\mathcal{E}) \to K_{0}(\Rcal) \to
K_{0}(\Sigma^{-1}\Rcal)
%\tag{$*$}
\end{equation}
\end{theorem}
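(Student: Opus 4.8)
\medskip

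The plan is to assemble the exact sequence \eqref{mainsequence} from two results already in the literature rather than to prove it from scratch. First I would invoke Schofield's localization theorem \cite{Schofield}, which for any collection $\Sigma$ of monomorphisms between finitely generated projective $\calR$-modules gives exactness of the last five terms of \eqref{mainsequence},
\[
K_{1}(\calR) \to K_{1}(\Sigma^{-1}\calR) \to K_{0}(\mathcal{E}) \to K_{0}(\calR) \to K_{0}(\Sigma^{-1}\calR),
\]
where the torsion term is the $K$-theory of the category of $(\calR,\Sigma)$-torsion modules. The point needing attention here is that the torsion category implicit in Schofield's formulation is the same as the category $\mathcal{E}=\mathcal{E}(\Sigma)$ of Definition \ref{Edefinition}; this is exactly the identification recorded in Theorem \ref{explained}, via the vanishing of $\Sigma^{-1}\calR\otimes_{\calR}M$ and $\mathrm{Tor}_{1}^{\calR}(\Sigma^{-1}\calR,M)$, so nothing new is required at this stage.

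Next I would append the leftmost map $K_{1}(\mathcal{E})\to K_{1}(\calR)$ and establish exactness of \eqref{mainsequence} at $K_{1}(\calR)$ using the Neeman--Ranicki extension. Concretely, Theorem \ref{sixR} provides a six-term exact sequence of the asserted shape, but with the torsion term described through the derived-category machinery of \cite{RanickiNeeman}; I would then quote \cite[Theorem 0.5]{Neeman2} to identify that torsion term with the $K$-theory of $\mathcal{E}(\Sigma)$, compatibly with the maps in the sequence. Splicing the Neeman--Ranicki six-term sequence onto Schofield's five-term sequence along the common map $K_{1}(\calR)\to K_{1}(\Sigma^{-1}\calR)$ then produces \eqref{mainsequence}: exactness at $K_{1}(\Sigma^{-1}\calR)$, at $K_{0}(\mathcal{E})$, and at $K_{0}(\calR)$ comes from Schofield, while exactness at $K_{1}(\calR)$ is the new Neeman--Ranicki input.

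The hard part is exactly this last point: the image of $K_{1}(\mathcal{E})$ in $K_{1}(\calR)$ must coincide with $\ker\bigl(K_{1}(\calR)\to K_{1}(\Sigma^{-1}\calR)\bigr)$. Unlike the commutative localization sequence \eqref{comlongexact}, where one has flatness and can run a Resolution-Theorem/Waldhausen-fibration argument, the map $\calR\to\Sigma^{-1}\calR$ is in general not stably flat, so there is no formal reduction; Neeman and Ranicki obtain this exactness through a delicate analysis of perfect complexes over $\Sigma^{-1}\calR$ and the controlled failure of flatness, and I would cite their theorem rather than reprove it. The only computation I would do by hand is the bookkeeping check that all the sequences being spliced genuinely share the same map $K_{1}(\calR)\to K_{1}(\Sigma^{-1}\calR)$, so that the splice is legitimate and the result is precisely the sequence displayed in \eqref{mainsequence}.
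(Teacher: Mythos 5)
Your proposal is correct and is essentially the paper's own treatment: the theorem is not proved from scratch but assembled from Schofield's five-term sequence and the Neeman--Ranicki extension, exactly as you describe, with the torsion term identified with $\mathcal{E}(\Sigma)$ via \cite[Theorem 0.5]{Neeman2}. The only (purely presentational) difference is that the paper skips the explicit splice along $K_1(\calR)\to K_1(\Sigma^{-1}\calR)$ by quoting the full six-term sequence of Theorem \ref{sixR} at once and then converting $K(\textbf{R})$ to $K(\mathcal{E})$ and $K(C_{b}(\textnormal{Proj}\,\calR))$ to $K(\calR)$ using Lemma \ref{subcategorylemma} and Gillet--Waldhausen, which renders the separate appeal to Schofield redundant.
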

\begin{remark}
\normalfont{
Neeman and Ranicki \cite{RanickiNeeman} extended
\eqref{mainsequence} to
\[
\cdots \to K_{n}(\mathcal{E}) \to K_{n}(\Rcal) \to K_{n}(\Sigma^{-1}\Rcal) \to
K_{n-1}(\mathcal{E})\to \cdots
\]
 for all $n>1$
%infinitely far to the left
under the
  hypothesis that the localization $\Sigma^{-1}\calR$ is \emph{stably
    flat}:
for all $n\geq 1$ the group
$\text{Tor}_n^{\calR}(\Sigma^{-1}\calR  ,\Sigma^{-1}\calR  )$
vanishes.  The six term version
\eqref{mainsequence}
has no stably flat requirement. We have no need of the full long exact in the present paper.}
\end{remark}

By Theorem \ref{NeemanRanicki6term}, to prove the injectivity of
$K_{1}(\calR[t]) \to K_{1}(\Omega_{+}^{-1}\calR[t])$ it is sufficient
to show the map $K_{1}(\mathcal{E}) \to K_{1}(\calR[t])$ in
\eqref{mainsequence} is zero. For
this, we will need a more detailed examination of the original
sequence from
\cite[Corollary 4.9]{RanickiNeeman}.
Definitions of maps
in $\eqref{mainsequence}$ involve identifications of various
groups, and we take care to track through these
identifications. We do this for general $\Sigma$ at first,
specializing to the case of interest
($\Sigma = \Omega_{+}$, the Fredholms) at a later point. \\

\indent Recall that a Waldhausen category consists of a category with
a subcategory of morphisms called cofibrations, along with a
distinguished family of morphisms called weak equivalences, satisfying
some axioms, which may be found in
\cite[Definition II.9.1.1]{WeibelBook}. We let $C_{b}(\textnormal{Proj} \calR)$ denote the following Waldhausen category:
\begin{enumerate}
\item The objects are bounded chain complexes of finitely generated projective $\calR$-modules
\item The morphisms are chain maps
\item The cofibrations are degree-wise split monomorphisms
\item The weak equivalences are the quasi-isomorphisms,
 i.e. the chain maps inducing an isomorphism on homology in every degree.
\end{enumerate}
The only Waldhausen categories which will be considered in this article are full subcategories of the category of chain complexes over some exact category, where the morphisms are chain maps, the cofibrations are degree-wise split monomorphisms, and the weak equivalences are quasi-isomorphisms. \\
%Note that in \cite{NeemanRanicki}, \cite{Neeman2}, the term perfect complex is used to refer to the objects of $C_{b}(Proj \calR)$.
\indent For an exact category $\mathcal{A}$ or Waldhausen category
$\mathcal{B}$, we let $K(\mathcal{A})$ and $K(\mathcal{B})$ denote the
corresponding K-theory spaces, as in \cite[IV.6.3 and IV.8.4]{WeibelBook}.
For a topological space $X$, let $\pi_n(X)$ denote the $n$th homotopy
group.
 By definition, $K_{n}(\mathcal{A}) =
 \pi_{n}(K(\mathcal{A}))$, and $K_{n}(\mathcal{B}) =
 \pi_{n}(K(\mathcal{B}))$. Since the definitions agree in the
 case $\mathcal{B}$ is exact
\cite[IV.8.6]{WeibelBook},  we do
 not distinguish, and use the same $K(\mathcal{A})$ and
 $K(\mathcal{B})$ for both.
%\\ \indent

We will make use of the following theorem.
\begin{theorem}[Gillet-Waldhausen] \label{gwtheorem}
Let $\mathcal{A}$ be an exact category, closed under taking kernels of
surjections. Then the exact monomorphism $\mathcal{A} \hookrightarrow
C_{b}(\mathcal{A})$,
%\annotation{I replaced inclusion with monomorphism to be formally
%  correct--yes??? }
taking an object $M$ to the chain complex
%\annotation{\red{Is this ``containing'' standard terminology?}}
which is $M$ in degree 0 and is zero elsewhere, induces a homotopy equivalence $K(\mathcal{A}) \stackrel{\sim}\to K(C_{b}(\mathcal{A}))$, and hence isomorphisms $K_{n}(\mathcal{A}) \stackrel{\cong}\to K_{n}(C_{b}(\mathcal{A}))$. %For $n=0$, the inverse isomorphism $K_{0}(C^{perf}(\mathcal{A})) \to K_{0}(\mathcal{A})$ is given by the euler characlalteristic $\chi$.
\end{theorem}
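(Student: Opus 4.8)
The plan is to realize $C_b(\mathcal A)$ as a suitable variant of the Waldhausen category of complexes and to invoke the Waldhausen localization and approximation machinery, rather than to argue directly with simplicial sets. First I would observe that, since $\mathcal A$ is closed under kernels of surjections, every bounded complex of objects of $\mathcal A$ has all its cycle, boundary, and homology objects again in $\mathcal A$; in particular every quasi-isomorphism in $C_b(\mathcal A)$ has a mapping cone which is an acyclic bounded complex over $\mathcal A$, and conversely. This is the technical hypothesis that makes the weak equivalences behave well: it guarantees that the acyclic complexes form a Waldhausen subcategory closed under extensions, so that the pair $(C_b(\mathcal A), \text{acyclics})$ admits the Waldhausen fibration/localization theorem. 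The inclusion $\mathcal A \hookrightarrow C_b(\mathcal A)$ sending $M$ to the complex concentrated in degree $0$ is clearly an exact functor and sends short exact sequences to cofiber sequences, so it induces a map of $K$-theory spaces; the content is that it is an equivalence.

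The key steps, in order, are: (1) Show $C_b(\mathcal A)$ with degreewise-split monomorphisms as cofibrations and quasi-isomorphisms as weak equivalences satisfies the Waldhausen axioms (saturation, the gluing/extension axiom, the cylinder axiom), using the hypothesis on kernels of surjections for the cone construction. (2) Apply the Waldhausen Localization Theorem to the subcategory of acyclic complexes $C_b^{\mathrm{ac}}(\mathcal A)$ to obtain a homotopy fiber sequence $K(C_b^{\mathrm{ac}}(\mathcal A)) \to K(C_b(\mathcal A), \text{iso}) \to K(C_b(\mathcal A), \text{q-iso})$, where in the middle term the weak equivalences are only isomorphisms. (3) Identify $K(C_b(\mathcal A),\text{iso})$ with $K(\mathcal A)$ by a stratification/filtration argument on the length of complexes together with the Additivity Theorem --- this is the standard computation that the $K$-theory of bounded complexes with only isomorphisms as weak equivalences recovers the $K$-theory of the underlying exact category, the point being that a complex with differentials is, up to the additivity filtration, a sum of its shifted objects. (4) Show $K(C_b^{\mathrm{ac}}(\mathcal A))$ is contractible: an acyclic bounded complex is built up by finitely many extensions from shifted contractible two-term complexes $[M \xrightarrow{\sim} M]$, and additivity shows each such contributes nothing, so the localization fiber sequence degenerates to the desired equivalence $K(\mathcal A) \xrightarrow{\sim} K(C_b(\mathcal A),\text{q-iso})$. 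Finally, passing to homotopy groups gives the isomorphisms $K_n(\mathcal A)\xrightarrow{\cong} K_n(C_b(\mathcal A))$.

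The main obstacle is step (3) together with the contractibility in step (4): both require careful use of the Additivity Theorem applied to the ``brutal truncation'' filtration of a bounded complex, and one must be attentive that the filtration subquotients genuinely live in the relevant subcategory and that the cofibration structure (degreewise-split monos) is preserved at each stage. The hypothesis that $\mathcal A$ is closed under kernels of surjections is exactly what is needed so that truncations and cones of quasi-isomorphisms remain in $C_b(\mathcal A)$; without it the acyclic subcategory is not closed under the operations needed for the localization theorem. In practice, rather than reproving all of this, I would cite the standard references --- it is \cite[Theorem 1.11.7]{ThomasonTrobaugh} or \cite[Theorem V.2.2]{WeibelBook} --- and content myself with verifying that $\mathcal A$, being closed under kernels of surjections, satisfies the hypotheses under which those references state the result.
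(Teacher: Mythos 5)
The paper does not prove this theorem; it states it and cites \cite[V.2.2, II.9.2.2]{WeibelBook} for a proof. Since your final move is likewise to defer to the standard references (Thomason--Trobaugh Theorem 1.11.7 or Weibel V.2.2) after checking the hypotheses, at the level of what actually appears in the paper you land in the same place.

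However, the sketch you offer in steps (3) and (4) contains two errors that happen to compensate for one another, which makes the outline look more convincing than it is. In step (3) you claim that brutal truncation plus Additivity identifies $K(C_b(\mathcal A), \textnormal{iso})$ with $K(\mathcal A)$; what Additivity actually gives is that $K(C_b(\mathcal A), \textnormal{iso})$ decomposes as a restricted product of copies of $K(\mathcal A)$, one per degree, because with isomorphisms as weak equivalences the shifts do not cancel (already at $K_0$ the class of $C$ is the tuple $([C^n])_n$, not an alternating sum). In step (4) you claim $K(C_b^{\mathrm{ac}}(\mathcal A), \textnormal{iso})$ is contractible because acyclics are built from $[M \xrightarrow{\sim} M]$; but with iso weak equivalences that two-term complex has $K_0$-class $[M] + [M[1]]$, not zero, and for $\mathcal A$ the finite-dimensional vector spaces over a field, $K_0(C_b^{\mathrm{ac}}(\mathcal A),\textnormal{iso})$ is already free of infinite rank. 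What the Fibration Theorem actually tells you is that $K(C_b(\mathcal A), \textnormal{q-iso})$ is the cofiber of $K(C_b^{\mathrm{ac}}(\mathcal A),\textnormal{iso}) \to K(C_b(\mathcal A),\textnormal{iso})$; the genuine work of Gillet--Waldhausen is to identify that cofiber with $K(\mathcal A)$, which Weibel does with a chain of Approximation Theorem arguments through categories of complexes supported in expanding ranges of degrees, and this is where closure under kernels of surjections (controlling the good truncations) is actually used. Your steps (1) and (2) and the remark about where the hypothesis enters are sound; it is the two identifications in the middle that would have to be replaced if one were to write out a full argument.
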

A proof of Theorem \ref{gwtheorem}
%3.6
may be found in \cite[V.2.2, II.9.2.2]{WeibelBook}. \\
%\red{Scott: to get `` Theorem \ref{gwtheorem}'' I replaced
%  text 3.6 with a reference to a label I added -- please check that I
%guessed right.}

Let $\Sigma = \{\sigma_{i}\}$ denote a collection of morphisms between
finitely generated projective $\Rcal$-modules. Note that each $\sigma
\in \Sigma$ may be considered in
  $C_{b}(\textnormal{Proj}\Rcal)$ as the complex
\begin{equation}\label{sigmachains}
%\tag{$**$}
\cdots \to 0 \to P \stackrel{\sigma} \to Q \to 0 \cdots
\end{equation}
with $P,Q$ in degrees $0,1$ and modules in all other degrees zero.\\
\indent By a Waldhausen subcategory $\mathcal{A} \subset \mathcal{B}$ of a Waldhausen category $\mathcal{B}$ we mean a subcategory $\mathcal{A} \subset \mathcal{B}$ which is also a Waldhausen category, satisfying:
\begin{enumerate}
\item
the inclusion functor $\mathcal{A} \to \mathcal{B}$ is exact,
%\annotation{\red{In ``i.e. preserves'', do you really mean exactness
%is equivalent to all of those preservations? - yes}}
i.e. preserves all of the following: zero, cofibrations, weak equivalences, and pushouts along cofibrations,
\item
the cofibrations in $\mathcal{A}$ are the maps in $\mathcal{A}$ which are cofibrations in $\mathcal{B}$ and whose cokernels lie in $\mathcal{A}$,
\item
the weak equivalences in $\mathcal{A}$ are the weak equivalences of $\mathcal{B}$ which lie in $\mathcal{A}$.
\end{enumerate}
%supported in degrees 0 and 1 by
%\indent
Define a Waldhausen category as follows:
\begin{definition} The category \textbf{R} is the smallest subcategory of $C_{b}(\textnormal{Proj} \calR)$ which:\\
(i) contains the complex \eqref{sigmachains} as defined above, for all $\sigma \in \Sigma$,\\
(ii) contains all acyclic complexes,\\
(iii) is closed under the formation of mapping cones and suspensions,\\
(iv) contains any direct summand of any of its objects.\\
\end{definition}
The following theorem
%version of \ref{NeemanRanicki6term}
is a combination of \cite[Corollary 4.9]{RanickiNeeman} and   \cite[Theorem 0.10]{Neeman2}.
\begin{theorem} \label{sixR}
\cite[p.789]{Neeman2} Let $\calR$ be a ring, and $\Sigma$ a collection of homomorphisms between finitely generated projective $\calR$-modules. There is an exact sequence
\begin{equation}
K_{1}(\textbf{R}) \to K_{1}(C_{b}(\textnormal{Proj} \calR)) \to K_{1}(\Sigma^{-1}\Rcal) \to K_{0}(\textbf{R}) \to K_{0}(C_{b}(\textnormal{Proj} \calR)) \to K_{0}(\Sigma^{-1}\Rcal)
\end{equation}
\end{theorem}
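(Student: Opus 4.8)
\emph{Proof plan.} The statement is, as indicated, a transcription of results from \cite{Neeman2,RanickiNeeman}, so the plan is not to reprove anything internal to those papers but to show how the six-term sequence is assembled. First I would invoke Waldhausen's localization (fibration) theorem for the pair $\textbf{R}\subset C_{b}(\textnormal{Proj}\,\calR)$. By construction $\textbf{R}$ is a Waldhausen subcategory closed under mapping cones, suspensions and direct summands and containing every acyclic complex, and $C_{b}(\textnormal{Proj}\,\calR)$ carries the evident mapping cylinder functor; taking as enlarged weak equivalences the chain maps whose cone lies in $\textbf{R}$, one checks the cylinder, saturation and extension axioms, so Waldhausen's theorem produces a homotopy fibration of K-theory spaces
\[
K(\textbf{R})\to K(C_{b}(\textnormal{Proj}\,\calR))\to K\bigl(C_{b}(\textnormal{Proj}\,\calR)^{\textbf{R}}\bigr),
\]
where the third term is $C_{b}(\textnormal{Proj}\,\calR)$ with this larger family of weak equivalences. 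Passing to homotopy groups, the low-degree portion of the associated long exact sequence reads
\[
K_{1}(\textbf{R})\to K_{1}(C_{b}(\textnormal{Proj}\,\calR))\to K_{1}(C_{b}(\textnormal{Proj}\,\calR)^{\textbf{R}})\to K_{0}(\textbf{R})\to K_{0}(C_{b}(\textnormal{Proj}\,\calR))\to K_{0}(C_{b}(\textnormal{Proj}\,\calR)^{\textbf{R}}),
\]
which is exactly the claimed shape once the third-term groups are identified.

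The remaining step is to identify $K_{i}(C_{b}(\textnormal{Proj}\,\calR)^{\textbf{R}})$ for $i=0,1$ with $K_{i}(\Sigma^{-1}\calR)$. Base change along $\calR\to\Sigma^{-1}\calR$ gives an exact functor $C_{b}(\textnormal{Proj}\,\calR)\to C_{b}(\textnormal{Proj}\,\Sigma^{-1}\calR)$; it sends each two-term complex \eqref{sigmachains} to an acyclic complex, since $\sigma$ becomes invertible over $\Sigma^{-1}\calR$, hence sends all of $\textbf{R}$ to acyclic complexes and therefore descends to $C_{b}(\textnormal{Proj}\,\calR)^{\textbf{R}}\to C_{b}(\textnormal{Proj}\,\Sigma^{-1}\calR)$. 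By Gillet--Waldhausen (Theorem \ref{gwtheorem}) the target has K-theory $K_{*}(\Sigma^{-1}\calR)$, and the assertion that this comparison map induces isomorphisms on $K_{0}$ and $K_{1}$ is precisely \cite[Corollary 4.9]{RanickiNeeman} together with \cite[Theorem 0.10]{Neeman2}. Substituting these identifications into the displayed six-term sequence yields the theorem.

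The genuinely hard input, which I would not attempt to reprove, is this last $K_{0}$/$K_{1}$ identification. Unlike the classical flat situation there is no a priori control of $\Sigma^{-1}\calR$ as an $\calR$-module, so one cannot argue by comparing module categories; \cite{Neeman2} instead analyzes the $\Sigma$-acyclic complexes directly, and this is also why the sequence halts after six terms: the comparison functor is only known to be $1$-connected on K-theory in general, becoming highly connected (and the sequence becoming a full long exact sequence) under the stable flatness hypothesis of \cite{RanickiNeeman}. For the present application the six-term form is all that is needed, since we use only the leftmost map $K_{1}(\textbf{R})\to K_{1}(C_{b}(\textnormal{Proj}\,\calR))$.
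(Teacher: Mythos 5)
Your proposal is correct and coincides with the paper's treatment: the paper does not prove Theorem \ref{sixR} but quotes it as a combination of \cite[Corollary 4.9]{RanickiNeeman} and \cite[Theorem 0.10]{Neeman2}, and your outline --- Waldhausen's fibration theorem for $\textbf{R}\subset C_{b}(\textnormal{Proj}\,\calR)$ with the coarsened weak equivalences, followed by the Neeman--Ranicki identification of the cofiber term's low-degree $K$-theory with that of $\Sigma^{-1}\calR$ --- is precisely how those references assemble the six-term sequence. The one small imprecision is at the very end: the application in Proposition \ref{vanishingmaps} uses not only the vanishing of the leftmost map but also exactness of the sequence at $K_{1}(C_{b}(\textnormal{Proj}\,\calR[t]))$ to conclude injectivity of $K_{1}(\calR[t])\to K_{1}(\Omega_{+}^{-1}\calR[t])$.
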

%We remark that
In Theorem \ref{sixR}, $\calR$ is general and there is no
requirement that $\Sigma$ consists of monomorphisms. The maps
$K_{i}(\textbf{R}) \to K_{i}(C_{b}(\textnormal{Proj} \calR))$ are
induced by the inclusion $\textbf{R} \to C_{b}(\textnormal{Proj}
\calR)$. Upon replacing $C_{b}(\textnormal{Proj} \calR)$ in
Theorem \ref{sixR} with $\calR$ using Gillet-Waldhausen, the maps $K_{i}(\calR) \to K_{i}(\Sigma^{-1}\calR)$ coincide with the maps $K_{i}(\calR) \to K_{i}(\Sigma^{-1}\calR)$ induced by the ring homomorphism $\calR \to \Sigma^{-1}\calR$ (see the discussion following Theorem 0.10 in \cite{Neeman2}).

Let $C_{b}(\mathcal{H}_{1}(\Rcal))$ denote the Waldhausen category of
bounded chain complexes of finitely presented $\Rcal$-modules of
projective dimension $\le 1$. Given $\Sigma$ a collection of
monomorphisms and $\mathcal{E} = \mathcal{E}(\Sigma)$ as in Definition
\ref{Edefinition},
we let $C_{b}(\mathcal{E})$ denote the Waldhausen category of bounded
chain complexes of objects of
$\mathcal{E}$. For both $C_{b}(\mathcal{H}_{1}(\calR))$ and $C_{b}(\mathcal{E})$, the cofibrations consist of the chain maps which are degree-wise split monomorphisms, and the weak equivalences are the quasi-isomorphisms. \\
\begin{lemma}\label{subcategorylemma}\cite[Theorem 2.7]{Neeman2}
There is a Waldhausen subcategory $\textbf{R}^{\prime} \subset C_{b}(\mathcal{H}_{1}(\Rcal))$ and inclusions $\textbf{R} \to \textbf{R}^{\prime}$, $C_{b}(\mathcal{E}) \to \textbf{R}^{\prime}$ that induce homotopy equivalences
\[ \xymatrix{
 & K(\textbf{R}) \ar[dr]^{\simeq} &  \\
  & & K(\textbf{R}^{\prime}) \\
  & K(C_{b}(\mathcal{E})) \ar[ur]_{\simeq} &  }  \\
\]
\end{lemma}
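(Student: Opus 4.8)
Write $\mathcal{E}=\mathcal{E}(\Sigma)$. The plan is to take for $\textbf{R}^{\prime}$ the smallest Waldhausen subcategory of $C_{b}(\mathcal{H}_{1}(\calR))$ that contains the image of $\textbf{R}$ under the inclusion $C_{b}(\textnormal{Proj}\calR)\hookrightarrow C_{b}(\mathcal{H}_{1}(\calR))$ and also contains $C_{b}(\mathcal{E})$; then the two inclusions in the statement are tautological, and the whole content is that each induces a homotopy equivalence. The one non-formal fact about $\mathcal{E}$ on which everything rests is that $\mathcal{E}$ is an \emph{abelian} subcategory of $\mathcal{H}_{1}(\calR)$, i.e.\ is closed under kernels and cokernels of its morphisms. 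I would prove this first: for $f\colon A\to B$ in $\mathcal{E}$, the modules $\ker f$, $\operatorname{im}f$, $\coker f$ are again finitely presented of projective dimension $\le 1$ by the usual dimension-shifting bookkeeping, and then, applying the two-out-of-three clause of Definition \ref{Edefinition} to the short exact sequences $0\to\ker f\to A\to\operatorname{im}f\to 0$ and $0\to\operatorname{im}f\to B\to\coker f\to 0$ (one can also package this via the $\Sigma^{-1}\calR\otimes(-)$ and $Tor_{1}$ vanishing of Theorem \ref{explained}), one concludes that $\ker f$ and $\coker f$ lie in $\mathcal{E}$. In particular $\mathcal{E}$ is closed under kernels of surjections, so Theorem \ref{gwtheorem} applies to it, and $C_{b}(\mathcal{E})$ is closed — literally for cones, suspensions and direct summands, and up to quasi-isomorphism for the replacement constructions below.

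Next I would record the two facts that tie the three categories together up to quasi-isomorphism. First, every object of $\textbf{R}$ is quasi-isomorphic to an object of $C_{b}(\mathcal{E})$: each generating complex $\cdots\to 0\to P\stackrel{\sigma}\to Q\to 0\to\cdots$ has homology $\coker\sigma\in\mathcal{E}$ concentrated in a single degree (since $\sigma$ is a monomorphism), hence is quasi-isomorphic to $\coker\sigma$ placed in that degree; acyclic complexes are quasi-isomorphic to $0$; and the property ``quasi-isomorphic to an object of $C_{b}(\mathcal{E})$'' is preserved by mapping cones, suspensions and direct summands by the abelianness of $\mathcal{E}$. Second, conversely, every object of $C_{b}(\mathcal{E})$ is quasi-isomorphic to an object of $\textbf{R}$: a bounded complex of objects of $\mathcal{E}$ has a bounded projective resolution (all modules have projective dimension $\le 1$), assembled by finitely many mapping cones from length-$\le1$ projective resolutions of its homology modules, which lie in $\mathcal{E}$; and here is the step I expect to be the crux — since the minimality clause in Definition \ref{Edefinition} says that every object of $\mathcal{E}$ is obtained from the cokernels $\coker\sigma$ by extensions, two-out-of-three and direct summands, a length-$\le1$ projective resolution of such an object is obtained from the complexes $[P\stackrel{\sigma}\to Q]$ by mapping cones and direct summands, hence lies in $\textbf{R}$ by the minimality clause defining $\textbf{R}$. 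Matching up these two minimality conditions — the one cutting out $\mathcal{E}$ inside $\mathcal{H}_{1}(\calR)$ and the one cutting out $\textbf{R}$ inside $C_{b}(\textnormal{Proj}\calR)$ — is essentially where the equivalence between the ``module'' and the ``complex of projectives'' packagings of $\Sigma$-torsion is used, and it is the part I would be most careful about.

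From these two facts, every object of $\textbf{R}^{\prime}$ — being built from objects of $\textbf{R}$, objects of $C_{b}(\mathcal{E})$ and acyclic complexes by mapping cones, suspensions and direct summands — is simultaneously quasi-isomorphic to an object of $\textbf{R}$ and to an object of $C_{b}(\mathcal{E})$. This is exactly the input needed to run Waldhausen's Approximation Theorem on each of the exact inclusions $\textbf{R}\hookrightarrow\textbf{R}^{\prime}$ and $C_{b}(\mathcal{E})\hookrightarrow\textbf{R}^{\prime}$: the first approximation hypothesis is immediate because the weak equivalences in all three categories are the quasi-isomorphisms, and the second — that a map from an object of the subcategory to an object of $\textbf{R}^{\prime}$ factors as a cofibration followed by a quasi-isomorphism with middle term in the subcategory — follows from the standard resolution-plus-mapping-cylinder construction, using that each object of $\textbf{R}^{\prime}$ receives a quasi-isomorphism from an object of the subcategory. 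Hence $K(\textbf{R})\to K(\textbf{R}^{\prime})$ and $K(C_{b}(\mathcal{E}))\to K(\textbf{R}^{\prime})$ are both homotopy equivalences, which is the assertion; combined with Theorem \ref{gwtheorem} this also yields $K(\mathcal{E})\simeq K(C_{b}(\mathcal{E}))\simeq K(\textbf{R}^{\prime})\simeq K(\textbf{R})$, which is the identification used in the sequel.
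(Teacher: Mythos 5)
The paper does not prove this lemma; it is cited from \cite[Theorem 2.7]{Neeman2}, and the remark that follows only records Neeman's definition: $\textbf{R}^{\prime}$ is the \emph{full} Waldhausen subcategory of $C_{b}(\mathcal{H}_{1}(\Rcal))$ of objects that become isomorphic in the derived category to objects in the image of $D(\textbf{R})$. Your reconstruction takes a different route and rests on a claim that is false. You assert that $\mathcal{E}$ is an \emph{abelian} subcategory of $\mathcal{H}_1(\calR)$, closed under kernels and cokernels of arbitrary morphisms. The obstruction is projective dimension: there is no reason the cokernel of a morphism of $\mathcal{E}$ has projective dimension $\le 1$, so it need not even lie in $\mathcal{H}_1(\calR)$. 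Already in the case to which the paper applies the lemma this fails. Take $\calR=\Z$ and $\Sigma=\Omega_{+}$ over $\Z[t]$. Then $(\Z,\cdot 2)$, i.e.\ the $\Z[t]$-module $\Z[t]/(t-2)$, is in $\mathcal{E}$ (it is $\coker$ of the monic, hence Fredholm, $1\times1$ matrix $t-2$), multiplication by $2$ is a morphism of $\mathcal{E}$, and its cokernel is the $\Z[t]$-module $\Z[t]/(2,t)$, which has projective dimension $2$ over $\Z[t]$ and so is not in $\mathcal{H}_1(\Z[t])$, let alone $\mathcal{E}$. What \emph{is} true, and what Gillet--Waldhausen needs, is closure under kernels of admissible epimorphisms: that does follow from two-out-of-three together with $\mathcal{H}_{1}$ being closed under kernels of surjections; but closure under arbitrary kernels and cokernels does not.

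This is not a peripheral remark in your argument: abelianness is exactly what you invoke to conclude that the property ``quasi-isomorphic to an object of $C_b(\mathcal{E})$'' is preserved under mapping cones, and that cone-stability is what propagates through the two minimality clauses, feeds the claim that every object of your $\textbf{R}^{\prime}$ receives a quasi-isomorphism from each of $\textbf{R}$ and $C_b(\mathcal{E})$, and hence underpins the Approximation Theorem hypotheses. With $\mathcal{E}$ merely exact, the homology of such a cone need not lie in $\mathcal{E}$, and the step breaks. You are also using a different $\textbf{R}^{\prime}$ (the minimal Waldhausen subcategory containing both) from Neeman's (the full one closed under derived isomorphism to $D(\textbf{R})$); Neeman's choice makes cone-closure of $\textbf{R}^{\prime}$ automatic (thickness of a triangulated subcategory), and shifts the real work to identifying, at the triangulated level, the essential image of $D(C_b(\mathcal{E}))$ with the thick subcategory generated by $\Sigma$ --- which is exactly what the lemmas preceding \cite[Theorem 2.7]{Neeman2} establish and what cannot be shortcut at the module level.
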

%\red{Below, I dropped your  \textbf{Remark: } and put the paragraph into the latex remark
%  environemnt. \\}
\begin{remark}
The subcategory $\textbf{R}^{\prime}$ of
Lemma
%3.9
\ref{subcategorylemma}
%\red{Scott: please check I guessed right
%  when replacing ``Lemma 3.9'' with ``Lemma \ref{subcategorylemma}''}
defined in \cite[Theorem 2.7]{Neeman2} is the full Waldhausen
subcategory
of $C_{b}(\mathcal{H}_{1}(\Rcal))$ consisting of all objects which
become isomorphic in $D(C_{b}(\mathcal{H}_{1}(\Rcal)))$ to objects in
the image of $D(\textbf{R})$, the derived category of $\calR$. Details
regarding $\textbf{R}^{\prime}$ are not important for the present
article, and may be found in the proof of Theorem 2.7 in
\cite{Neeman2}.
\end{remark}
 %(Recall the derived category of a category of bounded chain complexes consists of the same objects, but one identifies chain maps which are chain homotopic, and then localizes at the set of quasi-isomorphisms. In other words, given a category $\mathcal{A}$ whose objects are some class of bounded chain complexes, the objects of $D(\mathcal{A})$ are again this class of chain complexes, but the morphisms now are so called 'roofs': a morphism from $C \to D$ consists of $C \leftarrow C^{\prime} \to D$, where $C^{\prime} \to C$ is a quasi-isomorphism, and $C^{\prime} \to D$ is a any morphisms of chain complexes.))

%\red{Scott: please replace the hardwired 3.10,3.4,3.8 of the next
%  paragraph (probably some
%  or all are wrong now) with appropriate refs to labels}
One consequence of \ref{subcategorylemma} is that, by the Gillet-Waldhausen theorem, we have $K(\textbf{R}) \simeq K(\mathcal{E})$, which gives one of the identifications made when passing between \ref{NeemanRanicki6term} and \ref{sixR}. \\

We now specialize to the case of interest, in order to prove the main
result of the section.
%\\
\emph{For the remainder of the section, we let $\Sigma=\Omega_{+}$
  denote the collection of Fredholm homomorphisms of finitely
  generated projective
$\calR[t]$-modules.}
% over $\calR[t]$.}

\begin{proposition} \label{vanishingmaps}
Consider a polynomial ring $\calR[t]$, with $\Omega_{+}$ the collection of Fredholm homomorphisms, and \textbf{R} as defined in Definition 3.8. Then the maps
$$K_{n}(i) \colon K_{n}(\textbf{R}) \to K_{n}(C_{b}(\textnormal{Proj} \calR[t]))$$
are zero, for all $n$, where $K_{n}(i)$ is the map induced by the inclusion \textbf{R} $\to C_{b}(\textnormal{Proj} \calR[t])$.  \\
\end{proposition}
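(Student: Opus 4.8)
The plan is to carry out, in the general setting, the analog of the commutative argument just given, with the torsion category $\mathcal{E}=\mathcal{E}(\Omega_{+})$ playing the role that $\mathcal{H}_{1,S_{+}}(\calR[t])$ played there. First I would make a reduction. Lemma \ref{subcategorylemma} together with Gillet-Waldhausen (Theorem \ref{gwtheorem}) identifies $K(\textbf{R})$ with $K(\mathcal{E})$ and $K(C_{b}(\textnormal{Proj}\,\calR[t]))$ with $K(\calR[t])$ --- these are exactly the identifications used to pass between Theorem \ref{sixR} and Theorem \ref{NeemanRanicki6term} --- and, chasing the resulting commuting diagram of inclusions, under these identifications $K(i)$ becomes $\rho^{-1}\circ K(j)$, where $j\colon\mathcal{E}\hookrightarrow\mathcal{H}_{1}(\calR[t])$ is the inclusion and $\rho\colon K(\calR[t])\xrightarrow{\ \sim\ }K(\mathcal{H}_{1}(\calR[t]))$ is the Resolution-theorem isomorphism. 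Since $\rho$ is an isomorphism on all homotopy groups, it suffices to show $K_{n}(j)=0$ for all $n$.

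The crucial input is the statement that every object $M$ of $\mathcal{E}=\mathcal{E}(\Omega_{+})$ is finitely generated projective as an $\calR$-module --- the noncommutative replacement for the Grayson observation used above. To see it, I would invoke Theorem \ref{explained}: $M$ is finitely presented of projective dimension $\le 1$ over $\calR[t]$, and $\Omega_{+}^{-1}\calR[t]\otimes_{\calR[t]}M=0$, $\textnormal{Tor}_{1}^{\calR[t]}(\Omega_{+}^{-1}\calR[t],M)=0$. Writing $M=\coker(\sigma)$ with $\sigma\colon P\to Q$ an injection of finitely generated projective $\calR[t]$-modules, these vanishing conditions say precisely that $\sigma$ becomes invertible over $\Omega_{+}^{-1}\calR[t]=\Omega_{M}^{-1}\calR[t]$. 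By the Cramer's-rule description of the homomorphisms inverted by a Cohn localization (\cite{Schofield,CohnBook}), $\sigma$ then lies in the saturation of the set of Fredholm homomorphisms; and that set is itself closed under the saturation operations --- composition, block-triangular combination, pre- and post-composition with isomorphisms, direct sums, and direct summands --- since each preserves injectivity and keeps the cokernel finitely generated projective over $\calR$ (using that a short exact sequence of $\calR$-modules with finitely generated projective outer terms splits). Hence $\sigma$ is itself Fredholm, and $M=\coker(\sigma)$ is finitely generated projective over $\calR$.

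Granting this, I would define the exact functor $\eta\colon\mathcal{E}\to\textnormal{Proj}\,\calR[t]\subset\mathcal{H}_{1}(\calR[t])$ by $\eta(M)=M\otimes_{\calR}\calR[t]$; it takes values in $\textnormal{Proj}\,\calR[t]$ by the previous paragraph and is exact because $\calR[t]$ is free over $\calR$. For any $\calR[t]$-module $M$ the sequence $0\to M\otimes_{\calR}\calR[t]\xrightarrow{\,t-f_{M}\,}M\otimes_{\calR}\calR[t]\xrightarrow{\,\pi_{M}\,}M\to 0$, with $t-f_{M}\colon x\otimes t^{i}\mapsto x\otimes t^{i+1}-f_{M}(x)\otimes t^{i}$ and $\pi_{M}\colon x\otimes t^{i}\mapsto f_{M}^{i}(x)$, is exact, and it is natural in $M$ because $\psi f_{M}=f_{N}\psi$ for $\calR[t]$-linear $\psi\colon M\to N$; just as in the commutative case this yields a short exact sequence of exact functors $\eta\stackrel{\mathcal{F}}{\rightarrowtail}\eta\stackrel{\mathcal{G}}{\twoheadrightarrow}j$ from $\mathcal{E}$ to $\mathcal{H}_{1}(\calR[t])$. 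The Additivity Theorem \cite[V.1.2]{WeibelBook}, read at the level of K-theory spaces, then gives $K_{n}(\eta)=K_{n}(\eta)+K_{n}(j)$ on every homotopy group, hence $K_{n}(j)=0$ for all $n$; combined with the reduction of the first paragraph, this proves the Proposition.

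The main obstacle is the middle step: proving that every object of $\mathcal{E}(\Omega_{+})$ is finitely generated projective over $\calR$. In the commutative case this was a one-line appeal to Grayson, since an $S_{+}$-torsion module of projective dimension $\le 1$ is literally annihilated by a monic polynomial; here one must instead route the argument through the Cohn localization at the monic (equivalently, Fredholm) matrices and the attendant Cramer's-rule bookkeeping, and in particular verify that the Fredholm homomorphisms form a saturated class. Everything else --- exactness of $\eta$, naturality of the canonical resolution, the Additivity step, and the tracking of the identifications coming from Lemma \ref{subcategorylemma} and Gillet-Waldhausen --- runs parallel to the commutative argument already presented.
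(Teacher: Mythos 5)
Your global strategy is the paper's: reduce, via Lemma \ref{subcategorylemma}, Gillet--Waldhausen and the Resolution Theorem, to showing that the inclusion $j\colon \mathcal{E}\to\mathcal{H}_{1}(\calR[t])$ induces zero on all $K_{n}$, and then conclude from the Additivity Theorem applied to the short exact sequence of functors $\eta\rightarrowtail\eta\twoheadrightarrow j$ built from the characteristic resolution $0\to M\otimes_{\calR}\calR[t]\xrightarrow{t-f_{M}}M\otimes_{\calR}\calR[t]\to M\to 0$. The reduction and the Additivity step are fine. The one place you diverge from the paper is the verification that $\eta(M)=M\otimes_{\calR}\calR[t]$ actually lands in $\mathcal{H}_{1}(\calR[t])$ for every $M\in\mathcal{E}$, and that is where there is a gap. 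The paper proves only the weaker containment $\mathcal{E}\subset X$, where $X=\{M:\eta(M)\in\mathcal{H}_{1}(\calR[t])\}$, by checking that $X$ satisfies the closure conditions (1)--(3) of Definition \ref{Edefinition} and invoking the \emph{minimality} of $\mathcal{E}$; this is a soft argument that never needs to identify the objects of $\mathcal{E}$. (It is also why the paper works with $X$ rather than with ``finitely generated projective over $\calR$'': the latter class fails the two-out-of-three condition in the quotient case, so minimality cannot be applied to it directly.)

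You instead assert the stronger statement that every object of $\mathcal{E}(\Omega_{+})$ is finitely generated projective over $\calR$, and your reduction of this to ``every injective $\sigma\colon P\to Q$ of finitely generated projective $\calR[t]$-modules that becomes invertible over $\Omega_{+}^{-1}\calR[t]$ is Fredholm'' is correct (the two vanishing conditions in Theorem \ref{explained} are equivalent to $\sigma\otimes 1$ being an isomorphism). But the latter statement is precisely the assertion that $\Omega_{+}$ is a \emph{saturated} set of morphisms, and your justification of it does not go through as written: Cramer's rule and Malcolmson's criterion describe the morphisms inverted by a universal localization in terms of stable association to the block-triangular closure of $\Sigma$, and one must actually extract from that description that $\sigma$ itself is Fredholm. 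What you verify --- that the Fredholms are closed under composition, triangular combination, direct sums and summands, and composition with isomorphisms --- only shows that the Fredholms are \emph{contained in} a class stable under those operations; it does not show that every inverted morphism is \emph{generated} from $\Omega_{+}$ by them, which is the whole content. The intermediate claim is in fact true (it is essentially Ranicki's identification of the $\Omega_{+}$-torsion category with $\textnormal{\textbf{End}}(\calR)$, \cite[Ch.~10]{Ranickibook}), so your route can be completed by citing that result, and doing so would buy you a sharper structural statement than the paper needs; but as the proposal stands, the key step is asserted rather than proved, whereas the paper's minimality argument circumvents it entirely.
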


Since the maps $K_{n}(\textbf{R}) \to K_{n}(C_{b}(\textnormal{Proj} \calR[t]))$ in Theorem \ref{sixR} are induced by the inclusion $\textbf{R} \to C_{b}(\textnormal{Proj} \calR[t])$, Theorem \ref{fredholmk1injective} will follow from Proposition \ref{vanishingmaps}.\\

\emph{Proof of Proposition \ref{vanishingmaps}:} Consider the diagram of inclusions
\[ \xymatrix{
 &  & C_{b}(\mathcal{H}_{1}(\Rcal[t])) \\
  & \textbf{R} \ar[ur] \ar[r] & C_{b}(\textnormal{Proj} \Rcal[t]) \ar[u] }  \\
\]
%\red{The next bit needs more detail; and if you use the resolution
%  theorem, give a statement. Also, watch the grammar--do you really
%  want a functor using a theorem?}
By the Resolution Theorem (see \cite[V.3.1]{WeibelBook}) we have $K(\textnormal{Proj}\calR[t]) \simeq K(\mathcal{H}_{1}(\calR[t]))$, so combined with Gillet-Waldhausen, the vertical functor on the right induces a homotopy equivalence
$$K(C_{b}(\textnormal{Proj} \Rcal[t])) \stackrel{\simeq}\to K(C_{b}(\mathcal{H}_{1}(\Rcal[t])))$$
and therefore isomorphisms $K_{n}(C_{b}(\textnormal{Proj} \calR[t]))
\to K_{n}(C_{b}(\mathcal{H}_{1}(\calR[t])))$ for all $n$. Furthermore,
Lemma \eqref{subcategorylemma}
shows that the images of the homomorphisms
\begin{align*}
K_{n}(\textbf{R}) &\to K_{n}(C_{b}(\mathcal{H}_{1}(\Rcal[t])))  \\
K_{n}(C_{b}(\mathcal{E})) &\to K_{n}(C_{b}(\mathcal{H}_{1}(\Rcal[t])))
\end{align*}
coincide. We claim that the map $K_{n}(C_{b}(\mathcal{E})) \to K_{n}(C_{b}(\mathcal{H}_{1}(\Rcal[t])))$ is zero for all $n$. This will prove that $K_{1}(\calR[t]) \to K_{1}(\Omega_{+}^{-1}\calR[t])$ is injective, in light of Theorem 3.8. We have a diagram

\[ \xymatrix{
 & C_{b}(\mathcal{E}) \ar[r] & C_{b}(\mathcal{H}_{1}(\Rcal[t])) \\
  & \mathcal{E} \ar[u] \ar[r] & \mathcal{H}_{1}(\Rcal[t]) \ar[u] }  \\
\]
in which by Gillet-Waldhausen the vertical arrows
induce homotopy equivalences in $K$,

\[ \xymatrix{
 & K(C_{b}(\mathcal{E})) \ar[r] & K(C_{b}(\mathcal{H}_{1}(\Rcal[t]))) \\
  & K(\mathcal{E}) \ar[u]^{\simeq} \ar[r] & K(\mathcal{H}_{1}(\Rcal[t])) \ar[u]_{\simeq} }  \\
\]
Thus it suffices to show that the maps $K_{n}(\mathcal{E}) \to
K_{n}(\mathcal{H}_{1}(\Rcal[t]))$, induced by the inclusion functor
$j \colon \mathcal{E} \to \mathcal{H}_{1}(\Rcal[t]))$, are zero for all $n$.

Let $X$ be the full subcategory of $\mathcal{H}_{1}(\calR[t])$
whose objects are
the modules $M$ in (i.e. the
objects $M$ of) $\mathcal{H}_{1}(\calR[t])$
such that $\eta(M) := M \otimes_{\calR} \calR[t]$ is in
$\mathcal{H}_{1}(\calR[t])$. (For example, $\calR[t]$ is in
$\mathcal{H}_{1}(\calR[t])$ but is not in $X$, because
  $\calR[t]$
 is not
  finitely generated as an $\calR$-module.) We claim that $\mathcal{E}$ is contained in $X$. Consider each of the following:
\begin{enumerate}
\item
If $\sigma \in \Omega_{+}$, then $\coker(\sigma)$
is finitely generated projective
as an $\calR$-module, since $\Omega_{+}$ consists of Fredholm morphisms. It follows that $\coker(\sigma) \otimes_{\calR} \calR[t]$ lies in $\textnormal{Proj} \calR[t] \subset \mathcal{H}_{1}(\calR[t])$, so $X$ contains the cokernels of all morphisms $\sigma \in \Omega_{+}$.
\item
Now suppose
$$0 \to M_{1} \to M_{2} \to M_{3} \to 0$$
is exact in $\mathcal{H}_{1}(\calR[t])$.

Tensoring this sequence with $\calR[t]$ gives
\begin{equation}\label{tensorsequence}
0 \to M_{1} \otimes_{\calR}\calR[t] \to M_{2} \otimes_{\calR}\calR[t]
\to M_{3} \otimes_{\calR}\calR[t] \to 0
%\tag{$***$}
\end{equation}
which is
an exact sequence of $\calR[t]$-modules,
 since $\calR[t]$ is free over $\calR$.
 We claim that if two of $M_{1}, M_{2}, M_{3}$ lie in
 $X$, then so does the third.
\begin{enumerate}
\item
Suppose $M_{2}$ and $M_{3}$ lie in $X$.
Then $\eta(M_{2})$ and $\eta(M_{3})$ lie in
$\mathcal{H}_{1}(\calR[t])$. Since $\mathcal{H}_{1}(\calR[t])$ is
closed under kernels of surjections (see \cite[II.7.7.1]{WeibelBook}),
the exactness of \eqref{tensorsequence} shows that  $M_{1}$ lies in $X$.
\item Suppose $M_{1}$ and $ M_{3}$ lie in $X$. Then
the exactness of \eqref{tensorsequence} along with
%\annotation{\red{Is it feasible to give a short indication of what
%that fact 2.2.8 is?}}
the fact that $\mathcal{H}_{1}(\calR[t])$ is closed under extensions (see \cite[2.2.8]{WeibelHomological}) implies $M_{2}$ lies in $X$ as
  well.
\item
Suppose $M_{1}$ and $M_{2}$ lie in $X$. Then
the exactness of
\eqref{tensorsequence} shows that  $\eta(M_{3})$
is finitely presented, being a quotient of two finitely presented
modules. But it is clear that $\eta(M_{3})$ is also of homological
dimension $\le 1$, so $M_{3}$ is in $X$ as well.
\end{enumerate}
\item
$X$ contains all direct summands of its objects, since $\mathcal{H}_{1}(\calR[t])$ is closed under direct summands.
\end{enumerate}
Since $\mathcal{E}$ is the minimal subcategory of
$\mathcal{H}_{1}(\calR[t])$ satisfying
the corresponding properties (1,2,3) in
Definition
\ref{Edefinition}, we have $\mathcal{E} \subset X$, as desired.\\

%(note to self: $M$ of projective dimension le 1 over $\mathcal{S}$ implies $M \otimes_{\mathcal{S}}\Rcal$ is also of pd le 1, since given a projective resolution $P \to Q \to M$, since $\mathcal{S}[t]$ is a free $\mathcal{S}$-module, tensoring that resolution with $\mathcal{S}[t]$ gives a projective resolution of $M \otimes_{\mathcal{S}} \mathcal{S}[t]$.)
\indent The remainder of the proof closely follows that of the commutative case given earlier. Given $M \in \mathcal{E}$, let $f_{M}$ denote the endomorphism of $M$ induced by the
$\calR[t]$-module structure of $M$ (so $f_{M}(x) = t \cdot x$). From the discussion above we have the exact functor $\eta \colon \mathcal{E}(\Omega_{+}) \to \mathcal{H}_{1}(\calR[t])$, and we denote by $\mathcal{F}$ the natural transformation $\mathcal{F} \colon \eta  \mapsto  \eta$ defined by $\mathcal{F}(M) \colon \eta(M) \stackrel{t-f_{M}}\to \eta(M)$. Recall $j \colon \mathcal{E} \to \mathcal{H}_{1}(\calR[t])$ denotes the inclusion functor. Define the natural transformation $\mathcal{G} \colon \eta \mapsto j$ by $\mathcal{G} \colon \eta(M) \stackrel{\pi}\to M$, where $\pi(p(t) \otimes x) = p(f_{M})(x)$. Then $\eta \stackrel{\mathcal{F}}\rightarrowtail \eta \stackrel{\mathcal{G}}\twoheadrightarrow j$ is an exact sequence of functors, since for any $M \in \mathcal{E}$, the sequence
$$0 \to M \otimes_{\calR} \calR[t] \stackrel{t-f_{m}}\rightarrowtail M \otimes_{\calR} \calR[t] \stackrel{\pi}\twoheadrightarrow M \to 0$$
is exact (see \cite[p. 630]{BassBook}). Letting $K_{n}(\eta),
K_{n}(j)$ denote the corresponding maps on K-theory, the Additivity
Theorem (V.1.2 in \cite{WeibelBook}) now implies that, for all $n$,
$K_{n}(\eta) = K_{n}(\eta) + K_{n}(j)$. Thus $K_{n}(j)$ is the zero
map, for all $n$. This finishes the proof of Theorem \ref{fredholmk1injective}.
\section{The elementary stabilizer} \label{secstabilizer}

Recall our notational conventions (\ref{abuse}, \ref{remarkableabuse}).
In particular, $\mathfrak M(\calR) $ is the set of $\N\times \N$ matrices
over the ring $\calR $ equal to the identity except in
finitely many entries, with
$\el(\calR) \subset \GL (\calR) \subset \mathfrak M(\calR) $.
  Given $\calR$ and
$M$ in $\mathfrak M(\calR)$,  the elementary stabilizer of
$M$ is defined to be
\begin{equation}\label{elstabdefn}
\seor (M)= \{ U\in \GL (\calR): U\eo (M) \subset \eo (M)\} \  .
\end{equation}
Because $\EL (\calR )$ is a subgroup of $\seor (M)$,
$\{ [U]\in K_1(\calR)\colon U \in \seor (M)\}$ is
a subgroup of $K_1(\calR)$, which by abuse of notation
we also denote by $\seor (M)$. We give a shorter notation
for the elementary stabilizer which is our main interest.
Given an $n\times n$
matrix $A$ over $\calR$,
let $I-tA$ denote
$(I_n-tA)_{\stabone} = I -tA_{{\stabzero}}$
(as in \ref{remarkableabuse})  and define
\begin{equation} \label{eardefn}
\ear := \seort (I-tA) \ .
\end{equation}
    If $U\in \ear$, then there are $E,F$ from
  $\EL(\calR)$ such that $U(I-tA) = E(I-tA)F$.
  Evaluating at $t=0$, we see
    that $\ear$, considered as a subgroup of $K_1(\calR[t])$,
    satisfies
    \begin{equation} \label{elstabisinnk1}
      \ear \subset NK_1(\calR) \ .
    \end{equation}

\begin{proposition}\label{nopun}
Suppose $\calR$ is a ring
and $ A\in \mathfrak M (\calR)$.
%Define
%$\ear :=\{[U]\in K_1(\calR ): U\in \seor (I-A)\}$.
%Then $\ear$
%is a subgroup of $K_1(\calR)$,  and
Then there is a bijection
\begin{align*}
  K_1(\calR)/\seor (A) &\to \{\eo (B): B \in \go (A)\} \\
  [U]\ & \mapsto \ U\eo (A) \ .
\end{align*}
If $B\in \go (A)$, then $\seor(B) = \seor(A)$. \\
Also,
\begin{equation}\label{ww}
\bigcup_{A\in \mathcal M(\calR )}\ear  = \bigcup_{C\in \GL
  (\calR[t])\colon\, \\ C_0\in \GL (\calR)} \seort (C) \ .
\end{equation}
\end{proposition}

\begin{proof}
For $B\in
    \orbglr (A)$, let
%  we use abbreviations  $\mathcal E_B =\seor (I-B)$ and
    $\mathcal O_B=\eo (B)$.
    Then
      $U\mathcal O_B = \mathcal O_{UB}= \mathcal O_{BU}=\mathcal O_BU $,
    for all $U$ in $\GL (\calR )$ and  $B\in \orbglr (A)$.
    Therefore
    the rule $U\colon \mathcal O \mapsto U\mathcal O $ gives a well defined
    action of
    $\GL (\calR)$ on
  $\{ \mathcal O_B\colon B\in
    \orbglr (A)\} $.
    The isotropy group of an element $\mathcal O_B$ under this
    action is
    $    \seor (B)$, which contains
    $\el(\calR)$. Therefore given
    $B\in
    \orbglr (A)$ we have well defined bijections
    \begin{alignat*}{5}
            K_1(\calR)  /\seor (B) \ \
      & \to
      & \ \       \GL(\calR)/\seor (B) \ \
      & \to \ \
      & \{ \mathcal O_C\colon C \in \orbglr (A)\}
      \\
       [U] \qquad
      & \mapsto
      & [U] \qquad \qquad
      & \mapsto
      & U\mathcal O_C \qquad \ .\qquad \qquad \qquad
          \end{alignat*}
    For $B \in \orbglr (A)$, the isotropy groups
    $\seor (A)$ and $\seor (B)$ are conjugate in
    $\GL (\calR)$, and therefore equal, as $\seor (A)$ contains
    $\el(\calR)$, the
    commutator subgroup of $\GL (\calR)$.

To prove  ``Also'',  it now suffices, given a
 square matrix $C$ over $\calR[t]$ with
$C(0)$ in $\GL(\calR)$, to
note that the $\GL (\calR [t])$ orbit of $C$ contains
a matrix of the form $I-tA$ with $A$ over $\calR$.
This holds by application of Higman's trick to
$C_0^{-1} C$.
\end{proof}

The next result, a key fact for us, follows directly from Theorem \ref{fredholmk1injective}. %in the case of $\Omega_{M}(\calR[t])$%
For its statement, recall that by
our notational convention, the elementary stabilizer of a finite
matrix $I-A$ means the elementary stabilizer of
$(I-A)_{\stabone}$.
%Recall that the map
%$j \colon \calR[t] \to \Omega_{+}^{-1}\calR[t]$ from
%Corollary} \ref{diagramlemma} denotes the map $j = i_{2}
% \circ i_{1}$, where $i_{1} \colon \calR[t] \to \calR[t,t^{-1}]$ with $i_{1}$ sending $t \to t^{-1}$, and $i_{2} \colon \calR[t,t^{-1}] \to \Omega_{+}^{-1}\calR[t]$ the standard map for the localization.
Recall that the map $i:\calR[t] \to \Omega_{+}^{-1}\calR[t]$ denotes the standard map coming from the definition of the localization.

%\red{For a finite square matrix $I-tA$, we define

\begin{theorem} \label{trivialkseo}
  Let $\calR[t]$ be a polynomial ring, with coefficient ring $\calR$.
  If $A$ is a square matrix over $\calR [t]$
  such that $I-A \in \Omega_{+}(\calR[t])$, then
  $\seort (I-A) \textnormal{  is trivial in } K_1(\calR[t])$.
% such that
%$A \in \Omega_{+}(\calR[t])$ or
% is Fredholm over $\calR$ or
%$A\in \Omega_{M}(\calR[t])$.
%Then the elementary stabilizer $\ea=\eart$ is trivial in $K_1(\calR[t])$.
\end{theorem}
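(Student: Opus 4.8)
The plan is to deduce Theorem~\ref{trivialkseo} from the two injectivity statements already established, Theorem~\ref{fredholmk1injective} and Corollary~\ref{diagramlemma}. First I would unwind the definition of the elementary stabilizer: $U\in\seort(I-A)$ if and only if $U(I-A)_{\stabone}\in\orbelrt(I-A)$, equivalently if and only if there exist $E_{1},E_{2}\in\EL(\calR[t])$ with $U(I-A)_{\stabone}=E_{1}(I-A)_{\stabone}E_{2}$. One direction is immediate since $(I-A)_{\stabone}$ lies in its own orbit; for the converse one uses that $\EL(\calR[t])$ is normal in $\GL(\calR[t])$ (it is the commutator subgroup) together with the fact that $\orbelrt(I-A)$ is closed under left and right multiplication by $\EL(\calR[t])$. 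Thus it suffices to show that every such $U$ has $[U]=0$ in $K_{1}(\calR[t])$, and this we do by showing that its image is trivial in $K_{1}(\Omega_{+}^{-1}\calR[t])$, via a map under which $I-A$ becomes invertible.

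Suppose first that $I-A\in\Omega_{+}(\calR[t])$. Let $i\colon\calR[t]\to\Omega_{+}^{-1}\calR[t]$ be the localization map. By the defining property of the Cohn localization, $i(I-A)$ is invertible over $\Omega_{+}^{-1}\calR[t]$, hence the stabilization of $i(I-A)$ lies in $\GL(\Omega_{+}^{-1}\calR[t])$. Applying $i$ entrywise to $U(I-A)_{\stabone}=E_{1}(I-A)_{\stabone}E_{2}$ and passing to $K_{1}(\Omega_{+}^{-1}\calR[t])$, the classes of $i(E_{1})$ and $i(E_{2})$ are trivial and the class of $i(I-A)$ cancels, so $i_{*}[U]=0$. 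Since $i_{*}$ is injective by Theorem~\ref{fredholmk1injective}, $[U]=0$.

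Now suppose $A$ is over $t\calR[t]$, so $A=tA'$ with $A'=\sum_{i=0}^{d}A'_{i}t^{i}$ and each $A'_{i}$ over $\calR$. The key point is that the substitution $t\mapsto t^{-1}$ turns $I-A$ into a monic matrix up to a unit: applying $i_{1}\colon t\mapsto t^{-1}$ gives $i_{1}(I-A)=I-\sum_{i=0}^{d}A'_{i}t^{-i-1}$, and multiplying by the unit $t^{d+1}\in\calR[t,t^{-1}]$ yields the matrix $t^{d+1}I-\sum_{i=0}^{d}A'_{i}t^{d-i}$, which is \emph{monic} in $t$; hence it lies in $\Omega_{M}(\calR[t])\subset\Omega_{+}(\calR[t])$ and is invertible in $\Omega_{+}^{-1}\calR[t]=\Omega_{M}^{-1}\calR[t]$. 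Since $t$ is also a unit in $\Omega_{+}^{-1}\calR[t]$, it follows that $i_{1}(I-A)$, and therefore $j(I-A)$ with $j=i_{2}\circ i_{1}$ as in Corollary~\ref{diagramlemma}, is invertible over $\Omega_{+}^{-1}\calR[t]$. Arguing exactly as in the previous paragraph but applying $j$ entrywise, we get $j_{*}[U]=0$; since $j_{*}$ is injective by Corollary~\ref{diagramlemma}, again $[U]=0$.

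The genuinely hard input lies upstream, in the injectivity of $i_{*}$ and $j_{*}$, which depends on the Neeman--Ranicki six-term localization sequence and the vanishing established in Proposition~\ref{vanishingmaps}. Within the present argument the only delicate step is the monic rewriting in the second case: this is precisely why one must pass through the twist $t\mapsto t^{-1}$ and invoke Corollary~\ref{diagramlemma} rather than applying Theorem~\ref{fredholmk1injective} directly, since $I-tA'$ itself need not be a Fredholm matrix — its cokernel over $\calR$ can fail to be finitely generated when the leading coefficient of $A'$ is a non-unit.
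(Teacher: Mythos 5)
Your proposal is correct and follows essentially the same route as the paper: the Fredholm case is handled by the defining property of the Cohn localization together with the injectivity of $i_{*}$ from Theorem~\ref{fredholmk1injective}, and the $t\calR[t]$ case by the twist $t\mapsto t^{-1}$, the monic rewriting $t^{n}\cdot j(I-A)$, and the injectivity of $j_{*}$ from Corollary~\ref{diagramlemma}. The only additions are your explicit unwinding of the stabilizer condition and the closing remark on why $I-tA'$ need not itself be Fredholm, both of which are accurate but left implicit in the paper.
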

\begin{proof}

\iffalse
EDIT OUT PORTION BEGINS
We first consider the case where $A$ is over $t\calR[t]$. Let $U$ be a matrix in $\seo (I-A)$. Suppose $A = \sum_{i=1}^{n}t^{i}A_{i}$, with $A_{n} \ne 0$.
We have  matrices $E,F$ in $\text{El}(\calR[t])$ such that
$U(I-A)=E(I-A)F$.
%Suppose  $A\in \Omega$ where $\Omega$ is
%$\Omega_{+}(\calR[t])$ or
%$\Omega_{M}(\calR[t])$.
Then the embedding $j = i_{2} \circ i_{1} \colon R[t]\to \Omega^{-1} R[t]$ of
Corollary \ref{diagramlemma} satisfies $$t^{n} \cdot j(I-A) = t^{n}(I - \sum_{i=1}^{n}t^{-i}A_{i}) = t^{n}I-\sum_{i=1}^{n}t^{n-i}A_{i}$$
which is monic, and hence invertible in
$\Omega_{+}^{-1}\calR[t]$. Since $t$ is invertible in
$\Omega^{-1}\calR[t]$, $j$ thus sends $I-A $ to an invertible matrix. It follows that $[U]$
is in the kernel of the map
$j \colon K_1(R[t])\to K_1(\Omega_{+}^{-1}R[t] )$,
which by Corollary
\ref{diagramlemma} is injective.
Therefore  $[U]$ is trivial in
$K_1(R[t])$.\\
EDIT OUT SECTION ENDS
\fi

If $I-A \in \Omega_{+}(\calR[t])$ and
$U \in \seort(I-A)$, then $[U]$ is in the kernel of the map $i_{*}\colon K_{1}(\calR[t]) \to K_{1}(\Omega_{+}^{-1}\calR[t])$ of Theorem \ref{fredholmk1injective}. By Theorem \ref{fredholmk1injective}, this implies $[U]=0$.
\end{proof}
%\red{Do we want a more general class in place of $\widetilde{\Omega}_+ $?}

\begin{theorem} \label{classesinOmega}
  Let $A$ be a square matrix over $t\calR[t]$ such that $A = \sum_{i=1}^{d}t^{i}A_{i}$. Suppose $A$ satisfies any of the following:
\begin{enumerate}
  \item
$A_{d}$ is nilpotent and $A_{i} = 0$ for $1 \le i < d$.
\item
$A_{d}$ is invertible over $\calR$.
\item
$A_{d}$ is idempotent and $A_{i}=0$ for $1 \le i < d$.
\end{enumerate}
Then $\seort (I-A)$ is trivial in $K_{1}(\calR[t])$.
\end{theorem}
In the statement of Theorem \ref{classesinOmega},
if $d=1$ then $\seort (I-A)=E(A_1,\calR)$.

\begin {proof} [Proof of Theorem \ref{classesinOmega}]
%\begin{proof}
  The claim for case (1) is clear, because $I-A \in \GL (\calR[t])$.
    For the remaining cases,
    by Theorem \ref{trivialkseo} it suffices to show that the matrix $I-A$ is invertible over $\Omega_{+}(\calR[t])$.  For case
  $(2)$, the matrix $(I-A)A_{d}^{-1}$ is monic, and hence invertible over $\Omega_{+}^{-1}\calR[t]$.\\
\indent For case $(3)$, we first note that if $P$ is an $n \times n$ idempotent matrix over $\calR$, then $cok(I-tP)$ is a finitely generated projective $\calR$-module. Let $J$ denote $P(\calR^{n})$,
  the image of the $\calR$-module endomorphism $\calR^{n} \stackrel{P}\to \calR^{n}$ given by multiplication by $P$.
  The finitely generated $\calR$-module $J$
  is  projective, since $P$ is idempotent.
  Letting $x_0, \dots , x_{d-1}$ denote elements of $\calR^n$,
  we have an isomorphism of $\calR$-modules
  $cok(I-tP) \to J^d$ given by
  $[\sum_{i=0}^{d-1} t^{i+d}Px_i] \mapsto (Px_0, \dots , Px_{d-1})$.
It follows that the matrix $I-t^dP$ belongs to $\Omega_{+}^{mat}(\calR[t])$, i.e. is Fredholm. Thus, for the map $i\colon \calR[t] \to \Omega_{+}^{-1}\calR[t]$ given by the localization, the matrix $i(I-t^dP)$ is invertible over $\Omega_{+}^{-1}\calR[t]$.
\end{proof}
%Given $A$ square over $\calR$, we would like to know the
%  structure of $\ear$. We cannot answer the first question about this:
%  \begin{question}  \label{question:trivialear}
%    Must $\ear$ be trivial, for every square matrix $A$ over $\calR$?
%    \end{question}

%\red{BEGIN CUT}
%
%Corollary.
%Suppose $A = \sum_{i=1}^{d}t^{i}A_{i}$ is a square matrix over $t\calR[t]$, and there exists a nilpotent matrix $N$ over $\calR$ such that $A_{d}N = 0$ and $A_{d-1}N-A_{d}$ is invertible over $\calR$. Then $\seort (I-A)$ is trivial in $K_{1}(\calR[t])$.
%\begin{proof}
%This follows from Theorem \ref{classesinOmega}, along with the observation that, given $N$ as described, the top degree non-zero term of $(I-A)(I-tN)$ is $A_{d-1}N-A_{d}$.
%\end{proof}
%\red{END CUT}

In the case $\calR$ is commutative, localization techniques allow us
to make further statements regarding $\seort(I-A)$ for certain
matrices $A$ which satisfy none of the sufficent conditions
(1)-(3) of Theorem \ref{classesinOmega}.Our main tool for this will be the following result of
Vorst (see \cite[1.7, Remark 1.12]{Vorst1979}). For
a an element $r \in \calR$ we let $\calR_{r}$ denote the localization of $\calR$ at the multiplicative subset $\{r^{i}\}$. Recall that a collection of elements $\{f_{1},\ldots,f_{k}\} \subset \calR$ is called a unimodular row if the ideal $(f_{1},\ldots,f_{k})$ generated by the collection is $\calR$ itself.
\begin{theorem}\label{VorstTheorem}\cite[Corollary 1.7]{Vorst1979}
Let $\calR$ be a commutative ring, and let $f_{1},\ldots,f_{k} \in \calR$ be a unimodular row over $\calR$. Then the map $NK_{1}(\calR) \to \prod_{i=1}^{r}NK_{1}(\calR_{f_{i}})$ is injective.
\end{theorem}

\begin{proposition}\label{localizationprop}
  Let $\calR$ be a commutative ring, and let $A$ be a square matrix over $\calR$ such that
$0 \ne \det(A)$
  and $\det(A)$ is not a zero-divisor. Suppose
there exists $j$ unimodular rows
$$\{\det(A),f_{1,1},\ldots,f_{k_{1},1}\}, \ldots, \{\det(A),f_{1,j},\ldots,f_{k_{j},j}\}$$
 each containing $\det(A)$ such that
$$\bigcup_{i=1}^{j} \bigcap_{n=1}^{k_{i}}\textnormal{ker}(NK_{1}(\calR) \to NK_{1}(\calR_{f_{n,i}})) = NK_{1}(\calR)$$
Then $\seort(I-A) = 0$.
\end{proposition}
\begin{proof}
Suppose $G \in \seort(I-A)$, and let $[G]$ denote its class in $NK_{1}(\calR)$. The assumptions give an $i$ such that $[G] \in \bigcap_{n=1}^{k_{i}}\textnormal{ker}(NK_{1}(\calR) \to NK_{1}(\calR_{f_{n,i}}))$. Since $A$ is invertible over $\calR_{\det(A)}$, Theorem \ref{classesinOmega} implies $[G] \in \textnormal{ker}(NK_{1}(\calR) \to NK_{1}(\calR_{\det(A)}))$ as well, and hence by Theorem \ref{VorstTheorem} we must have $[G] = 0$.
\end{proof}
Proposition \ref{localizationprop} can be used to show
that for certain matrices $A$ over $\ZG$,
the elementary stabilizer $\ear:=\seort(I-tA)$ must vanish, as
follows.
\begin{corollary}\label{localizationexample}
  Let $G$ be a finite abelian group of order $|G|$, and let $\mathbb{Z}G$ denote the integral group ring. Let $A$ be a square matrix over $\mathbb{Z}G$ such
  that
  $0\neq \det(A) = a \in \mathbb{Z}$ and $(a,|G|) = 1$ (so $a$ and the order of $G$ are relatively prime). Then $\ = 0$.
\end{corollary}
\begin{proof}
The collection $\{a,|G|\}$ forms a unimodular row over $\mathbb{Z}G$. However, by \cite[6.5, pg. 490]{Weibel1981}, $\textnormal{ker}(NK_{1}(\mathbb{Z}G) \to NK_{1}((\mathbb{Z}G)_{a})) = NK_{1}(\mathbb{Z}G)$, so Proposition \ref{localizationprop} implies the claim.
\end{proof}
\begin{remark} \label{limitstolocal}
    The technique of using localization to
prove  $\seort(I-A)$ is trivial, as
in the proof of Corollary \ref{localizationexample}, has its limits.
For example, if
$G$ is a finite group $G$, then
the map $NK_{1}(\mathbb{Z}G) \to NK_{1}((\mathbb{Z}G)_{|G|})$ is the zero map.
\end{remark}

\begin{remark} \label{festabBS05remark}
For $G$ a finite group  and
$A$ a matrix over $\Z G$,  the group
$K_1(\Z G)/ \seozg (I-A)$
%(defined in Defn. \ref{stabilizerdefn4} )
appeared  in \cite{BS05}
as the primary invariant
for the classification up to equivariant flow equivalence
of certain symbolic
dynamical systems: irreducible shifts of finite type
with a free continuous shift-commuting $G$-action.
%(which can be presented by a matrix $A$ over $\Z_+G$).
%\st{In contrast to } \eqref{trivalelrt}\st{,}
\end{remark}

\begin{center}
{\bf Limits to generalizations}
\end{center}

Theorem \ref{classesinOmega} applies to a rather special class
of matrices and its proof appeals to the
sophisticated algebraic K-theory of Neeman and Ranicki
\cite{Neeman2, RanickiNeeman}.
It is natural to ask if there is an easier proof.
It is also natural to hope the conclusion  of Theorem \ref{classesinOmega}
 might hold for
a more general class of matrices.
We'll note next that some candidate improvements cannot
work.

\begin{remark}\label{failedproofs}
With an eye to an easier proof
of Theorem \ref{classesinOmega}, one might note
for $A$ over $t\calR [t]$ that
$(I-A)$ also inverts over the familiar ring of formal power series
$\calR[[t]]$, and ask if  $\calR[[t]]$ could play the role of
$\Omega^{-1}_{+}\calR[t]$ in
Theorem \ref{fredholmk1injective}. We thank Wolfgang Steimle for showing us
this fails: the natural map
$i^*\colon  K_1(\calR[t])\to K_1(\calR[[t]])$
induced by the inclusion
$i\colon \calR[t] \to \calR[[t]]$ need not be injective.
For example, if $\calR$ is commutative,
then there is a straightforward decomposition of $K_{1}(\calR[[t]])$ given by
$$0 \to K_{1}(\calR) \to K_{1}(\calR[[t]]) \stackrel{d}\to \hat{W}(\calR) \to 0$$
where $\hat{W}(\calR) = \{1 + \sum_{i=1}^{\infty}a_{i}t^{i}\} \in
\calR[[t]]$ is the group of Witt vectors. The map $d$ is given by
$d(M) =\det(M_{0}^{-1}M)$, where $M = \sum_{i=0}^{\infty}M_{i}t^{i}$
(as in e.g. \cite[14.6]{Ranickibook}). Thus,
if $\calR$ is a commutative ring (for example, an integral domain) such that $\det(I-tN) = 1$ for all
nilpotent matrices $N$, then
the kernel of the map
$K_{1}(\calR[t])\to K_{1}(\calR[[t]])$ induced by the inclusion
$\calR[t] \to \calR[[t]]$ will always
contain $NK_{1}(\calR)$.
Indeed, $d(I-tN) =\det(I-tN) = 1$, so $NK_{1}(\calR)$ maps into
the kernel of $d$, which is generated by the image of
$K_{1}(\calR)$;
but the only class of the form $[I-tN]$ which lies in the image of
$K_{1}(\calR)$ is the class $[1]$. Since there are integral domains
$\calR$ with $NK_{1}(\calR) \ne 0$
(e.g.  \cite[Example 3.5]{BoSc3})
%(as in {e.g. \cite{BoSc3})}
%\mbred{(
%see Example 3.5 in [Cite Spec paper])},
the map $NK_1(\calR[t])\to K_1(\calR[[t]])$ need not be injective.

Similarly,
one could hope to
 prove in place of Theorem \ref{fredholmk1injective} that the
 map $i_{*} \colon K_{1}(\calR[t]) \to K_{1}(S_{RM}^{-1}\calR[t])$ is
injective, where
$\Sigma_{RM}$ is the set of reverse monic matrices (those of the
form $A = I + \sum_{i=1}^{n}A_{i}t^{i}$). But this map need not
be injective.
In the case $\calR$ is
commutative,
 localizing at
$\Sigma_{RM}$ is equivalent to localizing at $S_{RMP} = \{p(t) = 1 +
\sum_{i=1}^{n}a_{i}t^{i}\}$, the set of reverse monic polynomials.
There is an exact sequence
\[
0 \to K_{1}(\calR) \to K_{1}(S_{RMP}^{-1}\calR[t]) \stackrel{d}\to
1+tS_{RMP}^{-1}\calR[t]\to 0
\]
which can be found by examining \cite[Corollary 3]{Grayson1977}, or \cite[III.2.4(2)]{WeibelBook}.
As in the previous paragraph, the map
$i_{*} \colon NK_{1}(\calR) \to K_{1}(S_{RMP}^{-1}\calR[t])$
will fail to be injective for an integral domain $\calR$
with $NK_{1}(\calR)$  nontrivial.
\\
\end{remark}

With regard to generalizing the result,
Corollary \ref{toobad} of
Proposition \ref{nontrivialkseoexamples} below
shows Theorem \ref{classesinOmega} already
fails badly for the more general class
of matrices
$I-A$ which are injective (in the statement
of Corollary \ref{toobad}, $\calR$ could be a polynomial
ring).
The rest of this section is devoted to establishing
that corollary.
We thank David Handelman for showing us the
 embedding  argument which produces
 the nonderogatory matrix $V=UE$
in the reduction step of Prop. \ref{nontrivialkseoexamples} below.

\begin{proposition} \label{nontrivialkseoexamples}
Suppose $\calR$ is an  integral domain of characteristic zero which does not
embed into
$\mathbb Z[i]$ or  $\mathbb Z[e^{i2\pi /3}]$,
and $U$ is in
$\SL (n,\calR)$. Then there is an $n\times n$
 matrix $A$ over $\calR$
such that $I-A$ is
injective and
$U$
is in the elementary stabilizer $\seo_{\calR}(I-A)$.
%$\ear$.
\end{proposition}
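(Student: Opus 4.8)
The plan is to produce, for a given $U\in\SL(n,\calR)$, a matrix $A$ over $\calR$ with $I-A$ injective as a map $\calR^n\to\calR^n$ (equivalently $\det(I-A)\neq 0$ in the domain $\calR$) and with $U(I-A)=E_1(I-A)E_2$ for some $E_1,E_2\in\el(\calR)$, so that $U\in\seo_\calR(I-A)$. The guiding idea is that if $I-A$ is \emph{nonderogatory} (its minimal polynomial equals its characteristic polynomial), then any matrix commuting with $I-A$ is a polynomial in $I-A$; and conjugation of $I-A$ by a unit $W$ gives $W(I-A)W^{-1}$, which differs from $I-A$ by left and right multiplication by $W$ and $W^{-1}$. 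So the natural attempt is: arrange that $U$ itself, or a small modification $V=UE$ with $E$ elementary, is conjugate into the centralizer of a well-chosen nonderogatory $I-A$.

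Concretely, I would first handle a \emph{reduction step}: show it suffices to treat the case where $U$ (after multiplying by a suitable $E\in\el(n,\calR)$, which doesn't change membership questions modulo the elementary subgroup and which we can absorb) is \emph{nonderogatory} over $\calR$. This is where Handelman's embedding argument enters: given arbitrary $U\in\SL(n,\calR)$, one embeds $\calR$ into a larger domain (or passes to $\calR$ itself if large enough) and finds an elementary $E$ so that $V=UE$ is nonderogatory; the hypothesis that $\calR$ does not embed in $\Z[i]$ or $\Z[e^{2\pi i/3}]$ is exactly what guarantees enough room to do this (these two exceptional rings are the rings of integers in imaginary quadratic fields with extra units / small class of companion-type obstructions, essentially the cases where $\SL_2$ is too rigid). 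Once $V$ is nonderogatory, it is conjugate to the companion matrix of its characteristic polynomial $p(t)$, say $V = W C W^{-1}$ with $W\in\gl(n,\calR)$ and $C$ the companion matrix. Since $V\in\SL(n,\calR)$, $p(0)=\pm\det V=\pm1$, so $p(t)=\det(tI-V)$ has constant term a unit.

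Next, the \emph{construction step}: take $A := I - V$, equivalently $I-A = V$. Then $I-A$ is injective iff $\det V$ is a nonzerodivisor, which holds since $\det V=1$. Moreover $U(I-A) = U V$; writing $U = V E^{-1}$ (from $V=UE$) we get $U(I-A)=VE^{-1}V = (V E^{-1} V^{-1})\,V$, and $VE^{-1}V^{-1}$... — here I'd instead argue directly that $U$ lies in the elementary stabilizer of $I-A=V$ by using that for a nonderogatory $V$, left multiplication by any conjugate-of-elementary matrix stays inside $\el(\calR)\cdot V\cdot\el(\calR)$: precisely, for $E\in\el(n,\calR)$ one has $VE = (VEV^{-1})V$ and, because $V$ is invertible over $\calR$ with $V^{-1}$ again in $\gl(n,\calR)$, the point is that $UV\in \el(\calR)\,V\,\el(\calR)$ follows from $U=VE^{-1}$ together with the identity $V E^{-1} V = (VE^{-1}V^{-1})V\cdot V$... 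I would clean this up by instead choosing $A$ so that $I-A$ is the companion matrix $C$ directly (conjugating the whole problem by $W$, which is harmless since $W\in\gl(n,\calR)\supset$ no new elementary obstruction after passing to $K_1$), and then exhibiting explicit elementary matrices realizing $U\,C = E_1\,C\,E_2$ via row/column operations that exploit the companion shape of $C$.

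I expect the \textbf{main obstacle} to be the reduction step — showing that a general $U\in\SL(n,\calR)$ can be multiplied by an elementary matrix to become nonderogatory, and pinning down exactly why the two excluded rings $\Z[i],\Z[e^{2\pi i/3}]$ are the only obstructions. This is a genuinely arithmetic statement (it is essentially about which companion-type normal forms are reachable by elementary modifications inside $\SL_n$ over small imaginary-quadratic rings of integers, where the unit group and class number conspire), and it is the part the authors explicitly credit to David Handelman. Everything after that — verifying $\det(I-A)\neq0$, and checking that $U$ conjugates (up to elementary matrices) into the polynomial centralizer of the nonderogatory $I-A$ so that $U(I-A)=E_1(I-A)E_2$ — should be a routine, if slightly fiddly, linear-algebra computation with companion matrices. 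I would also take care that the argument is uniform in $n$ (working with $(I-A)_{\stabone}$ as in the paper's conventions), but that introduces no real difficulty since enlarging by $\oplus\,1$ commutes with everything in sight.
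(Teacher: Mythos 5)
Your reduction step — producing $E\in\EL(n,\calR)$ with $V=UE$ nonderogatory (indeed with distinct eigenvalues) via the density argument enabled by the non-embeddability hypothesis — matches the paper's route and is the part credited to Handelman. The gap is in the construction step. You propose to take $I-A=V$ (or, in the ``cleaned-up'' version, $I-A=C$, the companion matrix), but both are \emph{invertible} over $\calR$: $V\in\GL(n,\calR)$ and $C\in\EL(n,\calR)$. For any matrix $M$ invertible over $\calR$, the stable elementary stabilizer $\seor(M)$ is exactly $\el(\calR)$: if $UM=E_1ME_2$ with $E_1,E_2\in\el(\calR)$ then $U=E_1\bigl(ME_2M^{-1}\bigr)\in\el(\calR)$ by the Whitehead lemma. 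So either choice would force $U$ to be trivial in $K_1(\calR)$, which is not given and defeats the purpose of the proposition. (Your computation $VE^{-1}V=(VE^{-1}V^{-1})V\cdot V$ correctly records that the leftover factor is an extra $V$, not an elementary matrix; and the claim $V=WCW^{-1}$ with $W\in\gl(n,\calR)$ is also not available — the conjugating matrix is in general only invertible over the fraction field $\mathbb F$, not over $\calR$.)

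The paper's construction exploits precisely this last point. Take $B$ over $\calR$ (cleared of denominators) conjugating $V$ to its companion matrix $C$ over $\mathbb F$, so $VB=BC$, and set $I-A:=B$. Then $B$ is injective because $\det B\neq0$ in the domain $\calR$, but $B$ need \emph{not} be invertible over $\calR$; and $V(I-A)=VB=BC=(I-A)C$ with $C\in\EL(n,\calR)$ exhibits $V\in\seor(I-A)$, hence $U=VE^{-1}\in\seor(I-A)$ as well. The non-invertibility of $B$ over $\calR$ is the essential feature that keeps $\seor(I-A)$ from collapsing to $\el(\calR)$. Your outline needs exactly this change — the role of $I-A$ should be played by the conjugating matrix, not by $V$ or its companion form — to close the argument.
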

%\annotation{I reworked Proposition \ref{nontrivialkseoexamples} and
%  its proof, without coloring.}
\begin{proof} Case I: For this case, we assume
there is a matrix $B$ over the field of fractions
 $\mathbb F$ of $\calR$ such that $B^{-1}UB=C$,
with $C$ a companion matrix.
Without loss of generality,  we then assume $B$ has all entries in $\calR$.
Because $C$ must be the companion matrix of the characteristic polynomial of $U$, the entries of $C$
must lie in $\calR$. From the companion matrix form and $\det C =1$,
we have $C\in \EL (n,\calR)$.
Now $UB=BC$; defining $A=I-B$, we have
that $U$ is in $\seo (I-A)$.
Clearly $I-A$ is injective.

For the reduction to Case I,
it suffices to show that there is a matrix
$E\in \EL (n,\calR)$ such that the matrix $V=UE$ has no
repeated eigenvalue
%nonderogatory (i.e. its characteristic polynomial equals
%its minimal polynomial). Then $V$ is
(and therefore is similar over $\mathbb F$ to its companion matrix).
%the Case I argument applies to $V$ and $[V]=[U]$.
After passing if needed to a subring containing the entries of $U$
and still satisfying the nonembeddability hypothesis,
we may assume $\calR$ is finitely generated.
Then $\mathbb F$
 is isomorphic to an algebraic extension
of a subfield of $\mathbb R$
(generated by $\mathbb Q$ and a set of
algebraically independent elements). Thus
after embedding $\mathbb F$ into
$\mathbb R$
or $\mathbb C$,  we have the
closure $\overline{\mathbb F}$ equal to
$\mathbb R$ or $\mathbb C$.
In either case, except under
the very  special conditions which are
excluded in the hypotheses (and are
not of interest to us now),
the ring $\calR$ will likewise be dense in
$\overline{\mathbb F}$, and consequently
$\text{El}(n,\calR)$ will be dense in
$\text{El}(n,\overline{\mathbb F})=\text{Sl}(n,\overline{\mathbb F})
$.
%\mbred{Scott, I put a proof of this for you just before the
%References.}
Let $W$ be a matrix in $\text{SL}(n,\mathbb Z)$
without repeated eigenvalues.
The matrices over $\mathbb F$ without repeated eigenvalues
 form a dense open set.
Consequently the matrix $U^{-1}W$ in
$\text{SL}(n, \mathbb F)$
can be perturbed to a matrix $E$
in $\text{El}(n,\calR)$
such that $UE$ has no repeated eigenvalues.
\end{proof}

In the next statement, $E_{A,\calR[t]}$ denotes
$\{[U]\in K_1(\calR[t] ): U\in \seo (I-A)\}$.

\begin{corollary} \label{toobad}
Suppose $\calR$ is a characteristic zero integral domain
  which is not generated by three elements as an additive group,
  and
$NK_1(\calR)$ nontrivial. (Such domains
exist.) In the class of injective matrices $(I-A)$
over $\calR[t]$,
the elementary stabilizer $E_{A,\calR[t]}$ is not independent of
$A$. If $H$ is a finitely generated subgroup
of $NK_1(\calR )$, then there exists an injective
$(I-A)$ such that
$E_{A,\calR[t]}$
   contains $H$.
\end{corollary}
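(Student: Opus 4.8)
The plan is to deduce the corollary from Proposition~\ref{nontrivialkseoexamples}, applied over the polynomial ring $\calR[t]$, together with a direct-sum construction that packs a whole finitely generated subgroup into the elementary stabilizer of a single matrix. Here and below, the elementary stabilizer and the group of its classes are taken over $\calR[t]$, so that $NK_1(\calR)\subseteq K_1(\calR[t])$ and $\eart\subseteq K_1(\calR[t])$ live in the same group.

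First I would fix generators $h_1,\dots,h_k$ of $H$. By the classical description recalled in Section~\ref{backgroundsec}, $NK_1(\calR)$ consists of the classes $[I-tN]$ with $N$ a nilpotent matrix over $\calR$, so I may write $h_i=[I-tN_i]$ with $N_i$ nilpotent over $\calR$. Since $\calR$ is a domain, the characteristic polynomial of $N_i$ is $\lambda^{m_i}$, hence $\det(I-tN_i)=1$ and $U_i:=I-tN_i$ lies in $\SL(m_i,\calR[t])$. I would also check that $\calR[t]$ satisfies the hypotheses of Proposition~\ref{nontrivialkseoexamples}: it is an integral domain, and it does not embed into $\Z[i]$ or $\Z[e^{2\pi i/3}]$, since if $\calR$ has positive characteristic this is immediate, while if $\calR$ has characteristic $0$ then $t$ is transcendental over $\Z$, whereas every element of $\Z[i]$ and $\Z[e^{2\pi i/3}]$ is integral over $\Z$.

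Next I would apply Proposition~\ref{nontrivialkseoexamples} over $\calR[t]$ to each $U_i$, obtaining an $m_i\times m_i$ matrix $A_i$ over $\calR[t]$ with $I-A_i$ injective and $U_i\in\seort(I-A_i)$; in particular $U_i(I-A_i)$ is $\EL(\calR[t])$-equivalent to $I-A_i$, say $U_i(I-A_i)=E_i(I-A_i)F_i$ with $E_i,F_i\in\EL(\calR[t])$. Then I would set $A=A_1\oplus\cdots\oplus A_k$, so that $I-A=\bigoplus_i(I-A_i)$ is an injective square matrix over $\calR[t]$, and let $\widetilde U_i$ be the matrix equal to $U_i$ in the $i$-th block and to the identity in every other block. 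Block-diagonal multiplication gives $\widetilde U_i(I-A)=\widetilde E_i(I-A)\widetilde F_i$, where $\widetilde E_i,\widetilde F_i$ are $E_i,F_i$ padded by identity blocks and so again lie in $\EL(\calR[t])$; hence $\widetilde U_i\in\seort(I-A)$ and $[\widetilde U_i]=[U_i]=h_i$ in $K_1(\calR[t])$. Therefore $\eart$ contains $h_1,\dots,h_k$, and since $\eart$ is a subgroup of $K_1(\calR[t])$ by Proposition~\ref{nopun}, it contains $H$. For the assertion that $\eart$ is not independent of $A$, take $H\neq 0$ (possible since $NK_1(\calR)\neq 0$ by hypothesis): the matrix just built has $\eart\supseteq H\neq 0$, whereas for any $A$ over $t\calR[t]$ --- for instance $A=0$ --- the matrix $I-A$ is injective and $\seort(I-A)$ is trivial in $K_1(\calR[t])$ by Theorem~\ref{trivialkseo}, so $\eart=0$ in that case.

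Proposition~\ref{nontrivialkseoexamples} carries essentially all the weight, so there is no deep obstacle. The one step that genuinely needs care is the passage from realizing a single matrix in the stabilizer to realizing an entire finitely generated subgroup, i.e.\ the block-padding argument that places every $\widetilde U_i$ simultaneously in $\seort(I-A)$ for one matrix $A$; the auxiliary facts --- that $\det(I-tN_i)=1$, so $U_i\in\SL$, and that $\calR[t]$ meets the non-embeddability hypothesis --- are routine.
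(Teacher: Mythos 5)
Your proof is correct and follows essentially the same route as the paper's: realize each generator of $H$ in the elementary stabilizer of some injective $I-A_i$ via Proposition \ref{nontrivialkseoexamples} applied over $\calR[t]$, then take the direct sum $A=\bigoplus_i A_i$, with the non-independence coming from the triviality of the stabilizer for invertible $I-A$. The extra details you supply (choosing representatives $I-tN_i$ with $\det=1$, verifying that $\calR[t]$ meets the non-embeddability hypothesis, and the explicit block-padding) are correct and merely make explicit what the paper leaves implicit.
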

\begin{proof}
  For $I-A$ invertible over $\calR[t]$,
$E_{A,\calR[t]}$ is trivial in $NK_1(\calR)$.
Now choose $U_k$ in $\gl (\calR[t] )$ for
$1\leq k \leq K$ with $[U_{k}] \in NK_{1}(\calR)$.
Because $\calR$ is an integral domain, the
$U_{k}$ lie in $SL(\calR[t])$. Proposition \ref{nontrivialkseoexamples} then gives
finite matrices $I-A_k$ over $\calR[t]$ with
$I-A_k$ injective such that for $A=A_k$,
$E_{A,\calR[t]}$ contains
$U_k$. If $A = \oplus_{k=1}^K A_k $, then
$E_{A,\calR[t]}$ contains all of the $U_k$.
For an explicit example of an integral domain $\calR$
which embeds into $\R$ and has $NK_{1}(\calR) \ne
0$ see \cite[Example 3.5]{BoSc3}.

%\red{Full previous text for proof:}
%For $A=0$, $\ear =NK_1(\calR)$
%\red{is first sentence to show it is possible to have $\ear = NK_{1}(\calR)?$: I think it should be for $A=I$?}.
%Given \red{$U_k$ in $\gl (\calR[t] )$ with $[U_{k}] \in NK_{1}(\calR)$},
%$1\leq k \leq K$, \red{we have $U_{k} \in SL(\calR[t])$, by Higman's trick and the fact that $\calR$ is an integral domain.}
%Proposition \ref{nontrivialkseoexamples} then gives
%finite matrices $I-A_k$ \red{over $\calR[t]$} with
%$I-A_k$ injective such that for $A=A_k$, $\ear $ contains
%$U_k$. If $A = \oplus_{k=1}^K A_k $, then
%$\ear $ contains all of the $U_k$.
%For an example of an integral domain $\calR$ with $NK_{1}(\calR) \ne
%0$ see e.g.  \cite[Example 3.5]{BoSc3}).
\end{proof}

\section{The union of the elementary stabilizers}
The purpose of this section is to prove Theorem
\ref{nonvanishingstabilizers}. Throughout, we will use the following
notational conventions
\begin{itemize}
\item
$\mathcal{R}$  denotes a commutative ring
(except in Conjecture \ref{unionofesconjecture}).
\item
$NSK_{1}(\mathcal{R})$ denotes the subgroup of $NK_{1}(\mathcal{R})$ defined by
$$\{[G] \in NK_{1}(\mathcal{R}) \hspace{.02in} | \hspace{.02in} \det(G)=1\}.$$
\item
$S_{RMP}$ denotes the collection of polynomials in $\mathcal{R}[t]$
whose constant term is 1, i.e. $S_{RMP} = \{1 + a_{1}t + \cdots
+a_{n}t^{n}\}$ (the ``reverse monic''  polynomials).
\item Given a matrix $C$ in $\mathcal M (\mathcal{R})[t])$, $C_0,
  \dots  C_n$ are the matrices over $\mathcal R $ such that
$C= \sum_{i=0}^nC_it^i$.
(For $\mathcal M(\mathcal R[t])$, recall the notation \eqref{abuse}.)
\item
$\ear = \seort (I-tA)$
% denotes the elementary stabilizer of $A$
(recall  Definitions \ref{elstabdefn}, \ref{eardefn}).
\end{itemize}
\begin{theorem}\label{nonvanishingstabilizers}
For a commutative ring $\mathcal{R}$, we have
\[
 \bigcup_{A\in \mathcal M(\mathcal R )} \ear = NSK_{1}(\mathcal{R})
\]
%where the union is taken over all finite square matrices over $\mathcal{R}$. \\
\indent If $\mathcal{R}$ is reduced, then $NSK_{1}(\mathcal{R})=NK_{1}(\mathcal{R})$.
\end{theorem}

\begin{remark}\label{meaculpa}
Version 1 of our arXiv post \cite{BoSc1} claimed
that
for every $\calR$ and every $A$,  $\ear$ is trivial.
Theorem \ref{nonvanishingstabilizers} corrects
that statement. The error in the proof in \cite{BoSc1}
is that
 \cite[Corollary 3.20 ]{BoSc1} is not true.
The error in the proof  of  \cite[Corollary 3.20 ]{BoSc1}
 is the claim of existence of the
map $f_1$. Under $j$, a monic matrix is carried to a
reverse monic matrix, which need not be invertible
in $\Omega_{+}^{-1}\calR [t]$; so we cannot apply the
universal property of the Cohn localization to produce
$f_1$. \end{remark}

\begin{proof}[Proof of Theorem \ref{nonvanishingstabilizers}]
The last statement of Theorem \ref{nonvanishingstabilizers}
recapitulates for reference a well known fact. (A ring is reduced if it
contains no nontrivial nilpotent element;  for  a nilpotent matrix
$N$ over a commutative
and reduced ring
$\mathcal R$, the unit $\det (I-tN)$ in $\calR[t]$ must equal 1.) So, we
only need to prove the claim in Theorem \ref{nonvanishingstabilizers}
for $NSK_1(\mathcal R)$.
\iffalse Also, if $C$ is a matrix in $\mathcal
(\calR[t])$ with $C_0$ in $\GL (\calR)$, then
(by Proposition \ref{xyz}) there is a matrix $A$ over $\calR$ such
\begin{equation}\ref{ww}
\bigcup_{A\in \mathcal M(\calR )}\ear  = \bigcup_{C\in \GL
  (R[t])\colon\, \\ C_0\in \GL (\calR)} \seort (C)
\end{equation}
 that  $\seort (C)=\ear $. Therefore
\fi

Let $j\colon \mathcal R[t] \to S_{RMP}^{-1}\calR[t]$ be the localization map,
with induced map $j_*\colon K_1(\calR [t]) \to K_1(S_{RMP}^{-1}\calR[t])$.
If $A\in \mathcal M(\calR) $ and
$U\in \ear$, then $[U]\in \ker(j_*)$, because $I-tA$ is invertible
over $S_{RMP}^{-1}\calR[t]$. Therefore
$ \ear \subset NK_1(\calR) $ follows from
$\ker(j_{*}) \subset NSK_{1}(\mathcal{R})$, which is part of the
next proposition.

\begin{prop}\label{nk1iskernel}
$NSK_{1}(\mathcal{R}) = \ker(j_{*})$.
\begin{proof}
Let $\tau_1\colon \calR[t] \to \calR$ and $\tau_2\colon S_{RMP}^{-1}\calR[t]\to \calR$
be the maps induced by $t\mapsto 0$. Then $\tau_1 = \tau_2j$ (because
$t\mapsto 0$ sends elements of $S_{RMP}$ to 1), so $j_*(x)=0$
implies $x\in \text{ker} ((\tau_1)_*)=NK_1(\calR)$.
Also, if $j_*(x)=0$  and $U$ is a matrix such that
$x=[U]$, then $\det(U) =1$. Therefore
$\ker (j_*) \subset NSK_1(\calR )$.

Now suppose $x\in NSK_1(\calR )$.
 By Higman's trick \cite[III.3.5.1]{WeibelBook},
$x = [I-tN]$ for some nilpotent matrix $N$ over $\mathcal{R}$. The
matrix $I-tN$ has the property that all diagonal entries are units in
$S_{RMP}^{-1}\mathcal{R}[t]$, and all off-diagonal entries lie in
$t\mathcal{R}[t]$. It follows that $I-tN$ is elementary equivalent
over $S_{RMP}^{-1}\mathcal{R}[t]$ to a $1\times 1$ matrix.
Thus $j_{*}([I-tN]) = [ (\det(I-tN)) ]= [(1)]=[I]$, and
$NSK_1(\calR) \subset \ker (j_*)$.
\end{proof}
\end{prop}
Before continuing the proof of Theorem
  \ref{nonvanishingstabilizers},
we pause to note there are rings for which
$\nk_1$ is nontrivial, but the elementary
stabilizer is always trivial.
\begin{remark}\label{goodrings}
Suppose $\calR$ is a commutative
    ring for which
the embedding $\sk_1(\calR) \to \sk_1(\calR[t])$
      induced by the inclusion $\calR  \to \calR[t]$ is
      surjective.
    (For example,
    $\calR=\mathcal S [x]/(x^N)$, with $N>1$ and
    $\mathcal S$ a commutative regular ring
    \cite[Example III.3.8.1]{WeibelBook}.)
    Then
$NSK_1(\calR)=\{0\}\neq \nk_1$,
 and $\ear$ is trivial
    for every matrix $A$ over $\calR$.
\end{remark}

To finish the proof of Theorem \ref{nonvanishingstabilizers}, it
suffices given $x\in NK_1(\mathcal R)$ to find $A$ in $\mathcal
M(\calR )$ such that $\ear$ contains $x$. Moreover, by
\eqref{ww}, its suffices to find $A$ in $\mathcal M(\calR)$
such that $\seort(A)$ contains $x$ and
\begin{equation}\label{007}
 A_0 \in \GL (\calR).
\end{equation}

Recall $\mathcal{H}_{1}(\mathcal{R}[t])$ denotes the
category of finitely generated $\calR[t]$-modules of projective
dimension $\le 1$. For a multiplicatively closed set of
non-zero-divisors $S \subset \mathcal{R}[t]$ we let
$\mathcal{H}_{1}(\mathcal{R}[t],S)$ denote the full subcategory of
$\mathcal{H}_{1}(\mathcal{R}[t])$ whose objects are also $S$-torsion
modules, i.e. modules $M$ such that $S^{-1}M = S^{-1}\mathcal{R}[t]
\otimes M = 0$. The categories $\mathcal{H}_{1}(\mathcal{R}[t])$ and
$\mathcal{H}_{1}(\mathcal{R}[t],S)$ are exact categories. The elements
of the multiplicative set $S_{RMP}$ are non-zero-divisors in $\calR
[t]$.  \\

Let $\mathcal{A}$ be an exact category. By a \emph{double short exact
  sequence
(d.s.e.s.)}  we mean a pair of short exact sequences in $\mathcal{A}$ on the same objects
\begin{gather*}
%\begin{split}
\label{doubleshortexact}
0 \to A \stackrel{f_{1}}\longrightarrow B \stackrel{g_{1}}\longrightarrow C \to 0\\
0 \to A \stackrel{f_{2}}\longrightarrow B \stackrel{g_{2}}\longrightarrow C \to 0
%\end{split}
\end{gather*}
which we denote by $A \doublearrow{f_{1}}{f_{2}} B \doublearrow{g_{1}}{g_{2}} C$. In \cite{Nenashev}, Nenashev defined the following group.
\begin{definition} \label{DAdefn}
The group $D(\mathcal{A})$ is defined to be the abelian group with
generators
$\langle\ell \rangle$ for all double short exact sequences $\ell$ subject to the following relations:
 \\
\begin{enumerate}[(i)]
\item
The class of any double short exact sequence of the form $A
\doublearrow{f}{f} B \doublearrow{g}{g} C$ is zero.
(A d.s.e.s. of this form is called {\it diagonal}.)
\item
Suppose we have a diagram consisting of six double short exact sequences of the form
\[
\xymatrix{
 A_{0} \xydr \xydd & A_{1}  \xydd \xydr & A_{2} \xydd\\
 B_{0} \xydr \xydd & B_{1} \xydd \xydr & B_{2} \xydd \\
 C_{0} \xydr & C_{1} \xydr & C_{2} \\
}
\]
satisfying the following commutativity conditions: all arrows on top
commute with all arrows on the left, and all arrows on bottom commute
with all arrows on the right.
(We will a diagram of this form a {\it Nenashev diagram}.) Then, letting $r_{i}$ denote the $i$th row in the diagram, $c_{i}$ the $i$th column in the diagram, we have the relation
$$[c_{0}] - [c_{1}] + [c_{2}] = [r_{0}] - [r_{1}] + [r_{2}]$$

\end{enumerate}
\end{definition}
Nenashev  proved in \cite{Nenashev} the following. Here $K_{1}(\mathcal{A})$ refers to Quillen's $K_{1}$ group.
\begin{theorem}[{{\cite[Nenashev]{Nenashev}}}]\label{NenashevsTheorem}
$K_{1}(\mathcal{A}) \cong D(\mathcal{A})$.
%Every element of $K_{1}(\mathcal{A})$ is represented by a loop corresponding to a double short exact sequence $A \doublearrow{f_{1}}{f_{2}} B \doublearrow{g_{1}}{g_{2}} C$. Moreover, the group $K_{1}(\mathcal{A})$ has the following presentation:
%\begin{enumerate}
%\item
%$K_{1}(\mathcal{A})$ is the abelian group generated by double short exact sequences $$A \doublearrow{f_{1}}{f_{2}} B \doublearrow{g_{1}}{g_{2}} C$$ subject to the following relations:
%\begin{enumerate}
%\end{enumerate}
\end{theorem}
\indent Nenashev's Theorem (Theorem \ref{NenashevsTheorem}) is based
on the Gillet-Grayson construction $G.\mathcal{A}$ associated to
$\mathcal{A}$,
whereby $K_n(\mathcal A)$ is presented as $\pi_n(G.\mathcal{A})$. To a
d.s.e.s. $A \doublearrow{f_{1}}{f_{2}} B \doublearrow{g_{1}}{g_{2}} C$ one may associate a loop in $\pi_{1}(G.\mathcal{A}) = K_{1}(\mathcal{A})$, and this produces a map $m\colon D(\mathcal{A}) \to \pi_{1}(G.\mathcal{A})$ enacting the isomorphism in Nenashev's Theorem.\\
\indent The Bass $K_{1}$ group of $\mathcal{A}$, denoted
$K_{1}^{\det}(\mathcal{A})$, is the group
$K_{0}(\textbf{Aut}\mathcal{A})/R$, where $R$ is the subgroup
generated by elements of the form $[(M,\alpha)]+[(M,\beta)] -
[(M,\alpha \beta)]$. There is a homomorphism
$\eta_{\mathcal A}\colon K_{1}^{\det}(\mathcal{A}) \to K_{1}(\mathcal{A})$, often called the Gersten-Sherman map, defined in \cite[Section 5]{Gersten}, \cite[Section 3]{Sherman1}. In fact, $\eta$ defines a natural transformation between the functors $K_{1}^{\det}$ and $K_{1}$. In general, $\eta_{\mathcal{A}}$ is neither surjective \cite[Prop. 5.1]{Gersten} nor injective \cite[Prop. 5.2]{Gersten}. In the case $\mathcal{A} = \textbf{Proj}\mathcal{R}$ is the category of finitely generated projective $\mathcal{R}$-modules, the map $\eta_{\mathcal{A}}$ is an isomorphism \cite[Section 3]{Sherman1}. \\
\indent The map $\eta_{\mathcal{A}}\colon K_{1}^{\det}(\mathcal{A}) \to
K_{1}(\mathcal{A})$ factors as $K_{1}^{\det}(\mathcal{A})
\stackrel{\gamma}\to D(\mathcal{A})
 \stackrel{m}\to K_{1}(\mathcal{A})$, where the map $\gamma$
is determined as follows \cite[pg.198]{Nenashev}:
for an automorphism $\alpha\colon M \to M$ in $\mathcal{A}$ and its
corresponding class $[M,\alpha]$ in $K_{1}^{\det}(\mathcal{A})$,
% $\gamma $ maps via
$\gamma \colon [M,\alpha] \mapsto \big\langle 0 \doublearrow{}{} M
\doublearrow{1}{\alpha} M \big\rangle$.
%$$\gamma:[M,\alpha] \mapsto \big\langle 0 \doublearrow{}{} M \doublearrow{1}{\alpha} M \big\rangle$$
%\\

\begin{remark}\label{RemarkNenashev} Using the relations for
  $D(\mathcal A)$, one may show the following (as in \cite{Nenashev}):
\begin{enumerate}
\item
If $\alpha \colon M \to M, \beta \colon M \to M$ are automorphisms in $\mathcal{A}$, then
$$\big\langle M \doublearrow{1}{\beta \alpha} M \doublearrow{}{} 0
\big\rangle
 = \big\langle M \doublearrow{1}{\alpha} M \doublearrow{}{} 0 \big\rangle + \big\langle M \doublearrow{1}{\beta} M \doublearrow{}{} 0 \big\rangle$$
\item
Given two d.s.e.s.'s
$$\big\langle M_{1} \doublearrow{f_{1}}{f_{2}} M_{2} \doublearrow{g_{1}}{g_{2}} M_{3} \big\rangle, \hspace{.04in} \big\langle M_{1}^{\prime} \doublearrow{f_{1}^{\prime}}{f_{2}^{\prime}} M_{2}^{\prime} \doublearrow{g_{1}^{\prime}}{g_{2}^{\prime}} M_{3}^{\prime} \big\rangle$$
we have
$$\big\langle M_{1} \doublearrow{f_{1}}{f_{2}} M_{2} \doublearrow{g_{1}}{g_{2}} M_{3} \big\rangle + \big\langle M_{1}^{\prime} \doublearrow{f_{1}^{\prime}}{f_{2}^{\prime}} M_{2}^{\prime} \doublearrow{g_{1}^{\prime}}{g_{2}^{\prime}} M_{3}^{\prime} \big\rangle$$
$$= \big\langle M_{1} \oplus M_{1}^{\prime} \doublearrow{f_{1} \oplus f_{1}^{\prime}}{f_{2} \oplus f_{2}^{\prime}} M_{2} \oplus M_{2}^{\prime} \doublearrow{g_{1} \oplus g_{1}^{\prime}}{g_{2} \oplus g_{2}^{\prime}} M_{3} \oplus M_{3}^{\prime}\big\rangle$$
\end{enumerate}
\end{remark}

\indent Quillen's localization sequence (see \cite[V.7.1]{WeibelBook})
in K-theory for the
localization map $j\colon R[t] \to S_{RMP}^{-1}R[t]$ has the form
\begin{equation}
\label{longexactlocal}
 \cdots \to K_{1}(H_{1}(\mathcal{R}[t],S_{RMP})) \stackrel{i_{*}}\longrightarrow K_{1}(\mathcal{R}[t]) \stackrel{j_{*}}\longrightarrow K_{1}(S_{RMP}^{-1}\mathcal{R}[t]) \to \cdots
 \end{equation}
in which the map $i_{*}$ factors as
$$K_{1}(\mathcal{H}_{1}(\mathcal{R}[t],S)) \stackrel{I_{1,*}}\longrightarrow K_{1}(\mathcal{H}_{1}(\mathcal{R}[t])) \stackrel{I^{-1}_{2,*}}\longrightarrow K_{1}(\textbf{Proj}\mathcal{R}[t])$$
where $I_{1,*}$ is induced by the inclusion functor $I_{1}\colon \mathcal{H}_{1}(\mathcal{R}[t],S) \to \mathcal{H}_{1}(\mathcal{R}[t])$, and $I_{2,*}^{-1}$ is the inverse of the isomorphism $I_{2,*}\colon K_{1}(\textbf{Proj}\mathcal{R}[t]) \to K_{1}(\mathcal{H}_{1}(\mathcal{R}[t]))$ induced by the inclusion $I_{2}\colon \textbf{Proj}\mathcal{R}[t] \to \mathcal{H}_{1}(\mathcal{R}[t])$ (see the proof of \cite[V.7.1]{WeibelBook}). That the map $I_{2,*}$ is an isomorphism follows from Quillen's Resolution Theorem.

\iffalse
Bass constructed \cite[pg. 494]{BassBook}
a localization sequence of the form
$$K_{1}^{\det}(\mathcal{R}[t]) \to K_{1}^{\det}(S^{-1}\mathcal{R}[t]) \to K_{0}(\mathcal{H}_{1}(\mathcal{R}[t],S)) \to K_{0}(\mathcal{R}[t]) \to K_{0}(S^{-1}\mathcal{R}[t])$$
The Gersten-Sherman transformation then gives a diagram
\[ \xymatrix{
& & K_{1}^{\det}(\mathcal{R}[t]) \ar[r] \ar[d]_{\cong}^{\eta_{\mathcal{R}[t]}} & K_{1}^{\det}(S^{-1}\mathcal{R}[t]) \ar[d]_{\cong}^{\eta_{S^{-1}\mathcal{R}[t]}} \\
& K_{1}(H_{1}(\mathcal{R}[t],S)) \ar[r]^{i_{*}} & K_{1}(\mathcal{R}[t]) \ar[r]^{j_{*}} & K_{1}(S^{-1}\mathcal{R}[t])  }\\
\]
where the vertical maps are isomorphisms.

\fi

\begin{lemma}\label{quickhomologicallemma}
Given $M \in \mathcal{H}_{1}(\mathcal{R}[t],S_{RMP})$ there exists $n$,
$M^{\prime} \in \mathcal{H}_{1}(\mathcal{R}[t],S_{RMP})$, and a map $h\colon \mathcal{R}[t]^{n} \to \mathcal{R}[t]^{n}$ such that $M \oplus M^{\prime} \cong \coker(h\colon \mathcal{R}[t]^{n} \to \mathcal{R}[t]^{n})$.
\begin{proof}
Let $0 \to P_{1} \stackrel{f}\longrightarrow P_{0} \to M \to 0$ be a
projective resolution for $M$. Since $f$ is invertible over
$S^{-1}_{RMP}\mathcal{R}[t]$, we may find $s \in S_{RMP}$ such that
$sf^{-1}$ is isomorphic to a map $g\colon P_{0} \to P_{1}$. Since
$P_{1} \oplus P_{0}$ is projective we may choose $Q$ such that $P_{1}
\oplus P_{0} \oplus Q \cong \mathcal{R}[t]^{n}$ for some $n$. Letting
$M^{\prime} =\coker(g)$ we have that
\[
0 \to P_{1} \oplus P_{0} \oplus Q \stackrel{f \oplus g \oplus
  1}\longrightarrow P_{0} \oplus P_{1} \oplus Q \to M \oplus
M^{\prime} \to 0
\]
is exact.
\end{proof}
\end{lemma}

\textbf{Notation: } The notation $\langle \ell \rangle$ refers to the class of a double short exact sequence $\ell$ in $D(\mathcal{A})$. From here on, we will abuse notation and also use $\langle \ell \rangle$ to refer to the image of $\langle \ell \rangle$ under Nenashev's isomorphism $m\colon D(\mathcal{A}) \to K_{1}(\mathcal{A})$. \\

%We are now ready to prove Theorem \ref{nonvanishingstabilizers}.
%\begin{theorem}\label{nonvanishingstabilizers}
%Let $\mathcal{R}$ be a commutative reduced ring. For any $x \in NK_{1}(\mathcal{R})$ there exists a square matrix $A_{G}$ over $\mathcal{R}$ such that $x \in \seort (A_{G})$.
%\end{theorem}

Now suppose
$x\in NSK_1(\calR)$. We will construct the required matrix $A$
($A$ is $\alpha_2\oplus 1$ below) such
that $\ear$ contains $x$.
By Proposition \ref{nk1iskernel} and the
 exactness of \eqref{longexactlocal}, we may fix $y \in K_{1}(\mathcal{H}_{1}(\mathcal{R}[t],S_{RMP}))$ such that $i_{*}(y) = x$, and by Nenashev's Theorem we may represent $y$ in $K_{1}(\mathcal{H}_{1}(\mathcal{R}[t],S_{RMP}))$ by a d.s.e.s. in $\mathcal{H}_{1}(\mathcal{R}[t],S_{RMP})$
$$y = \bigl\langle N_{1} \doublearrow{k_{1}}{k_{2}} N_{2} \doublearrow{l_{1}}{l_{2}} N_{3} \bigr\rangle$$
By Lemma \ref{quickhomologicallemma} we may choose
$N_{1}^{\prime}, N_{3}^{\prime}$ and endomorphisms
$ \alpha_{1}\colon \calR [t]^n \to \calR [t]^n $,
$ \alpha_{3}\colon \calR [t]^m \to \calR [t]^m $
such that $N_{1} \oplus N_{1}^{\prime} \cong \coker(\alpha_{1})$, $N_{3} \oplus N_{3}^{\prime} \cong \coker(\alpha_{3})$. Let $M_{1} = N_{1} \oplus N_{1}^{\prime}$, $M_{2} = N_{2} \oplus N_{1}^{\prime} \oplus N_{3}^{\prime}, M_{3} = N_{3} \oplus N_{3}^{\prime}$, and define
$$f_{1}= \begin{pmatrix} k_{1} & 0 \\ 0 &  1 \\ 0 & 0 \end{pmatrix},\ \
 f_{2} = \begin{pmatrix} k_{2} & 0 \\ 0 &  1 \\ 0 & 0 \end{pmatrix},\ \
 g_{1} = \begin{pmatrix} l_{1} & 0 & 0 \\ 0 & 0 & 1 \end{pmatrix}, \ \
g_{2} = \begin{pmatrix} l_{2} & 0 & 0 \\ 0 & 0 & 1 \end{pmatrix}$$
It follows from part 2 of Remark \ref{RemarkNenashev} that
$$y = \bigl\langle M_{1} \doublearrow{f_{1}}{f_{2}} M_{2} \doublearrow{g_{1}}{g_{2}} M_{3} \bigr\rangle$$
By construction, we have free resolutions for $M_{1}$ and $M_{3}$
\begin{gather*}
%\begin{split}
%\label{doubleshortexact}
0 \to \mathcal{R}[t]^{n} \stackrel{\alpha_{1}}\longrightarrow \mathcal{R}[t]^{n} \stackrel{\pi_{1}}\longrightarrow M_{1} \to 0\\
0 \to \mathcal{R}[t]^{m} \stackrel{\alpha_{3}}\longrightarrow \mathcal{R}[t]^{m} \stackrel{\pi_{3}}\longrightarrow M_{3} \to 0
%\end{split}
\end{gather*}
giving the following diagram in $\mathcal{H}_{1}(\mathcal{R}[t])$
\[
\xymatrix{
 \mathcal{R}[t]^{n} \ar@<-.5ex>[d]_{\alpha_{1}} \ar@<.5ex>[d]^{\alpha_{1}} &  & \mathcal{R}[t]^{m} \ar@<-.5ex>[d]_{\alpha_{3}} \ar@<.5ex>[d]^{\alpha_{3}}\\
 \mathcal{R}[t]^{n} \ar@<-.5ex>[d]_{\pi_{1}} \ar@<.5ex>[d]^{\pi_{1}} & & \mathcal{R}[t]^{m} \ar@<-.5ex>[d]_{\pi_{3}} \ar@<.5ex>[d]^{\pi_{3}} \\
 M_{1} \ar@<-.5ex>[r]_{f_{2}} \ar@<.5ex>[r]^{f_{1}} & M_{2} \ar@<-.5ex>[r]_{g_{2}} \ar@<.5ex>[r]^{g_{1}} & M_{3} \\
}
\]
Using the Horseshoe Lemma (\cite[2.2.8]{WeibelHomological}) we may
fill this in to get
the Nenashev diagram
 \[
\xymatrix{
 \mathcal{R}[t]^{n} \ar@<-.5ex>[d]_{\alpha_{1}} \ar@<.5ex>[d]^{\alpha_{1}} \ar@<-.5ex>[r] \ar@<.5ex>[r]  & \mathcal{R}[t]^{n} \oplus \mathcal{R}[t]^{m} \ar@<-.5ex>[d]_{\alpha_{2}} \ar@<.5ex>[d]^{\alpha_{2}} \ar@<-.5ex>[r] \ar@<.5ex>[r]  & \mathcal{R}[t]^{m} \ar@<-.5ex>[d]_{\alpha_{3}} \ar@<.5ex>[d]^{\alpha_{3}}\\
 \mathcal{R}[t]^{n} \ar@<-.5ex>[d]_{\pi_{1}} \ar@<.5ex>[d]^{\pi_{1}} \ar@<-.5ex>[r] \ar@<.5ex>[r] &  \mathcal{R}[t]^{n} \oplus \mathcal{R}[t]^{m} \ar@<-.5ex>[d]_{\pi_{2}^{\prime}} \ar@<.5ex>[d]^{\pi_{2}} \ar@<-.5ex>[r] \ar@<.5ex>[r]  & \mathcal{R}[t]^{m} \ar@<-.5ex>[d]_{\pi_{3}} \ar@<.5ex>[d]^{\pi_{3}} \\
 M_{1} \ar@<-.5ex>[r]_{f_{2}} \ar@<.5ex>[r]^{f_{1}} & M_{2} \ar@<-.5ex>[r]_{g_{2}} \ar@<.5ex>[r]^{g_{1}} & M_{3} \\
}
\]
Here $\pi_{2}^{\prime} = f_{1}\pi_{1} \oplus \pi^{(g_{1})}_{3}$ and $\pi_{2} = f_{2}\pi_{1} \oplus \pi^{(g_{2})}_{3}$, where $\pi^{(g_{1})}_{3}\colon \mathcal{R}[t]^{m} \to M_{2}$ is a lift of $\pi_{3}$ along $g_{1}$, and $\pi^{(g_{2})}_{3}\colon \mathcal{R}[t]^{m} \to M_{2}$ is a lift of $\pi_{3}$ along $g_{2}$. The horizontal maps in the first and second row are the canonical inclusions and projections.\\
\indent It follows from Nenashev's Theorem that in $K_{1}(\mathcal{H}_{1}(\mathcal{R}[t]))$ we have
\begin{gather*}
I_{1,*}(y) = \bigl\langle M_{1} \doublearrow{f_{1}}{f_{2}} M_{2} \doublearrow{g_{1}}{g_{2}} M_{3} \bigr\rangle = - \bigl\langle \mathcal{R}[t]^{n} \oplus \mathcal{R}[t]^{m} \doublearrow{\alpha_{2}}{\alpha_{2}} \mathcal{R}[t]^{n} \oplus \mathcal{R}[t]^{m} \doublearrow{\pi_{2}^{\prime}}{\pi_{2}} M_{2} \bigr\rangle\\
= \bigl\langle \mathcal{R}[t]^{n} \oplus \mathcal{R}[t]^{m} \doublearrow{\alpha_{2}}{\alpha_{2}} \mathcal{R}[t]^{n} \oplus \mathcal{R}[t]^{m} \doublearrow{\pi_{2}}{\pi_{2}^{\prime}} M_{2} \bigr\rangle
\end{gather*}
A proof for the second equality
follows that of \cite[Lemma 3.4]{Nenashev}.\\

We will use the following result, extracted from
Warfield's presentation
\cite[pp. 1816-1817]{Warfield1978}
of some results from a 1936 paper of Fitting \cite{Fitting1936}
(which were generalized by Warfield).
For the convenience of the reader we include a proof.
Below and later, we identify $M$ and $M\oplus \{0\}$.
\begin{theorem}\label{WarfieldLemma}\cite{Fitting1936,Warfield1978}
Let $\mathcal{S}$ be a ring, and suppose $\pi_{1}\colon P \to M$,
$\pi_{2}
\colon Q
\to M$ are cokernels for injective $\mathcal{S}$-module maps
$h_{1}\colon P
\to P$, $h_{2}\colon Q \to Q$ respectively, with $P,Q$ f.g. projective
$\mathcal{S}$-modules. Then there exist
$S$-module isomorphisms $\phi_{1}$, $\phi_{2}$ making the following diagram commute
\[
\xymatrix{
 P \oplus Q \ar[r]^{h_{1} \oplus 1} \ar[d]_{\phi_{2}} & P \oplus Q \ar[r]^{\pi_{1} \oplus 0} \ar[d]_{\phi_{1}} & M \ar[d]^{1} \\
 P \oplus Q \ar[r]^{1 \oplus h_{2}} & P \oplus Q \ar[r]^{0 \oplus \pi_{2}} & M \\
}
\]
Moreover, $\phi_{1}$ can be chosen such that
%$[P \oplus Q,\phi_{1}] = 0$ in $K^{\det}_{1}(\mathcal{S})$,
$[P \oplus Q,\phi_{1}] = 0$ in $K^{\det}_{1}(\mathcal{S})$,
and hence
$\eta_{\textbf{Proj}\mathcal{S}}([P \oplus Q,\phi_{1}]) = 0$ in
$K_{1}(\textbf{Proj}\mathcal{A})$. Equivalently, upon choosing a
projective module $Q^{\prime}$ such that $P \oplus Q \oplus
Q^{\prime}$ is free and a matrix $M_{\phi_{1}}$ to represent the map
$\phi_{1} \oplus 1\colon P \oplus Q \oplus Q^{\prime} \to P \oplus Q \oplus Q^{\prime}$, $M_{\phi_{1}}$ is elementary.
%, then $A$ is an $n \times n$ matrix over $\mathcal{R}[t]$ and $f_{A}\colon \mathcal{R}[t]^{n} \to \mathcal{R}[t]^{n}$ is the map induced by $A$. Suppose $M \cong \coker(f_{A}\colon \mathcal{R}[t]^{n} \to \mathcal{R}[t]^{n})$ and $\varphi\colon M \to M$ is an $\mathcal{R}[t]$-module automorphism of $M$. Then there exists a matrix $U \in GL(\mathcal{R}[t])$ and an elementary matrix $E \in El(\mathcal{R}[t])$ such that $EA = AU$.
\end{theorem}

\begin{proof}
In the proof, we will use a matrix formalism to denote maps from
$P \oplus Q$.
For example, $(\pi_1, \pi_2)$ denotes the map $P \oplus Q \to M$
which sends a pair $(p,q)$ (viewed as a column vector)
to $\pi_1(p) + \pi_2(q)$. Because $P$ is projective
and $\pi_2$
%(as a cokernel)
is surjective,
we may choose $\rho\colon P\to Q$ such that
$\pi_1=\pi_2 \rho$; likewise we have
$\psi\colon Q\to P$ such that
$\pi_2 = \pi_1 \psi$.
 Then
\[
(\pi_1,0)
\begin{pmatrix} 1& \psi \\ 0 &1
\end{pmatrix}
= (\pi_1, \pi_2) =
(0, \pi_2)
\begin{pmatrix} 1& 0\\ \rho  &1
\end{pmatrix}  \  .
\]
Define $\phi_1 \colon P\oplus Q \to P\oplus Q$ to be
$
\left( \begin{smallmatrix} 1& 0 \\ \rho &1
\end{smallmatrix} \right)
\left( \begin{smallmatrix} 1& -\psi \\ 0 &1
\end{smallmatrix} \right) $.
Because we identify $M$ and $M \oplus \{0\}$,
$(\pi_1,0) = (0,\pi_2) \phi_1$ means
$\pi_1 \oplus 0  = (0\oplus \pi_2) \phi_1$, as required.
From the
triangular forms, we see $\phi_1$ is trivial as an element of
$K_1(S)$.

We have
\begin{align*}
  \text{image}(h_{1} \oplus 1) &
  =
\text{image}(h_{1}) \oplus Q =
\ker( (\pi_1,0)) \quad \text{and} \\
\text{image}(1 \oplus h_{2}) & =
P \oplus \text{image}(h_{2}) =
\ker( (0,\pi_2) )\ .
\end{align*}
Because $\phi_1 $ takes $\ker ((0,\pi_1))$ onto
$\ker ((\pi_2,0))$, it follows that
$\phi_1$ maps
$  \text{image}(h_{1} \oplus 1) $
onto
$  \text{image}(1 \oplus h_{2}) $. Because $\pi_1$ and
$\pi_2$ are injective, the maps
$h_{1}\oplus 1$ and $1 \oplus h_{2}$ are isomorphisms onto
their images. Therefore, for the given
isomorphism $\phi_1$
 there is a unique isomorphism
$\phi_2\colon P\oplus Q \to P \oplus Q$ such that
$(h_{1}\oplus 1)\phi_1 = \phi_2 (1\oplus h_{2})$.
\end{proof}

Resuming the proof
of Theorem \ref{nonvanishingstabilizers}, we have
\begin{equation} \label{resuming}
x= I_{1,*}(y)= \big\langle
\mathcal{R}[t]^{n} \oplus \mathcal{R}[t]^{m}
\doublearrow{\alpha_{2}}{\alpha_{2}} \mathcal{R}[t]^{n} \oplus
\mathcal{R}[t]^{m} \doublearrow{\pi_{2}}{\pi_{2}^{\prime}} M_{2}
\big\rangle
\end{equation}
\iffalse
we consider the top and bottom short exact sequences individually
\begin{gather*}
0 \to \mathcal{R}[t]^{n} \oplus \mathcal{R}[t]^{m} \overset{\alpha_{2}}\longrightarrow \mathcal{R}[t]^{n} \oplus \mathcal{R}[t]^{m} \overset{\pi_{2}}\longrightarrow M_{2} \to 0\\
0 \to \mathcal{R}[t]^{n} \oplus \mathcal{R}[t]^{m} \overset{\alpha_{2}}\longrightarrow \mathcal{R}[t]^{n} \oplus \mathcal{R}[t]^{m} \overset{\pi_{2}^{\prime}}\longrightarrow M_{2} \to 0
\end{gather*}
\fi
We apply Theorem \ref{WarfieldLemma} to the top and bottom short exact sequence
of \eqref{resuming} to get
\begin{equation}\label{sespair1}
\begin{gathered}
\xymatrix{
0 \ar[r] &  \mathcal{R}[t]^{n+m} \oplus \mathcal{R}[t]^{n+m}\ar[r]^{\hspace{.1in}\alpha_{2} \oplus 1} \ar[d]_{\phi_{2}} & \mathcal{R}[t]^{n+m} \oplus \mathcal{R}[t]^{n+m} \ar[d]^{\phi_{1}}\ar[r]^{\hspace{.5in}\pi_{2} \oplus 0} & M_{2} \ar[r] \ar[d]^{1} & 0 \\
0 \ar[r] & \mathcal{R}[t]^{n+m} \oplus \mathcal{R}[t]^{n+m}\ar[r]^{\hspace{.1in}1 \oplus \alpha_{2}}  & \mathcal{R}[t]^{n+m} \oplus \mathcal{R}[t]^{n+m} \ar[r]^{\hspace{.5in}0 \oplus \pi_{2}^{\prime}} & M_{2} \ar[r] & 0 \\
}
\end{gathered}
\end{equation}
with $[\phi_1]$ trivial in $K_1(\calR [t])$.

We may regard the morphisms of \eqref{sespair1} as matrices.
Because
$\phi_1 \in \El (\calR [t])$ and $\alpha_2\oplus 1$ and $1 \oplus
\alpha_2$ are $\El (\calR [t])$ equivalent, we have
$\phi_2 \in \seort (\alpha_2 \oplus 1)$.
Because
$M_{2} \cong \coker(\alpha_{2})$ is a  $S_{RMP}$-torsion module,
there is some $q$ in $S_{RMP}$ such that
$q\calR[t]^{n+m} \subset \text{image}(\alpha_2 \oplus 1)$; therefore the injective map
$\alpha_2\oplus 1$ defines an
automorphism of $S_{RMP}^{-1}\calR [t]^{n+m}$, and the image
of $\alpha_2\oplus 1$
under $t\mapsto 0$ lies in $\GL (\calR)$, as required (recall
the
condition \eqref{007}).
So, to finish the
proof of Theorem \ref{nonvanishingstabilizers} it suffices to  show
 $[\phi_2] = x$.

Using
the pair (\ref{sespair1}) of short exact sequences as column one, we
get the following
Nenashev diagram in $\mathcal{H}_{1}(\mathcal{R}[t])$
\begin{equation}\label{diagram1}
\begin{gathered}
\xymatrix{
 \mathcal{R}[t]^{n+m} \oplus \mathcal{R}[t]^{n+m} \ar@<-.5ex>[d]_{\alpha_{2} \oplus 1} \ar@<.5ex>[d]^{1 \oplus \alpha_{2}} \ar@<-.5ex>[r]_{1} \ar@<.5ex>[r]^{\phi_{2}}  & \mathcal{R}[t]^{n+m} \oplus \mathcal{R}[t]^{n+m} \ar@<-.5ex>[d]_{1 \oplus \alpha_{2}} \ar@<.5ex>[d]^{1 \oplus \alpha_{2}} \ar@<-.5ex>[r] \ar@<.5ex>[r]  & 0 \ar@<-.5ex>[d] \ar@<.5ex>[d]\\
 \mathcal{R}[t]^{n+m} \oplus \mathcal{R}[t]^{n+m} \ar@<-.5ex>[d]_{\pi_{2} \oplus 0} \ar@<.5ex>[d]^{0 \oplus \pi_{2}^{\prime}} \ar@<-.5ex>[r]_{1} \ar@<.5ex>[r]^{\phi_{1}} &  \mathcal{R}[t]^{n+m} \oplus \mathcal{R}[t]^{n+m} \ar@<-.5ex>[d]_{0 \oplus \pi_{2}^{\prime}} \ar@<.5ex>[d]^{0 \oplus \pi_{2}^{\prime}} \ar@<-.5ex>[r] \ar@<.5ex>[r]  & 0 \ar@<-.5ex>[d] \ar@<.5ex>[d]\\
 M_{2} \ar@<-.5ex>[r]_{1} \ar@<.5ex>[r]^{1} & M_{2} \ar@<-.5ex>[r] \ar@<.5ex>[r] & 0 \\
}
\end{gathered}
\end{equation}
Letting $c_{i}, r_{i}$ denote the $i$th column, row, respectively, of
diagram
\eqref{diagram1}, we have
$$\bigl\langle c_{2} \bigr\rangle = \bigl\langle c_{3} \bigr\rangle = \bigl\langle r_{3} \bigr\rangle = 0$$
Define $\bigl\langle l \bigr\rangle = \bigl\langle c_{1} \bigr\rangle$. The diagram \eqref{diagram1}, together with Nenashev's relations, implies
\begin{equation}\label{arelation}
\bigl\langle l \bigr\rangle = \bigl\langle r_{1} \bigr\rangle - \bigl\langle r_{2} \bigr\rangle
\end{equation}
We claim that $$\bigl\langle l \bigr\rangle = \bigl\langle \mathcal{R}[t]^{n} \oplus \mathcal{R}[t]^{m} \doublearrow{\alpha_{2}}{\alpha_{2}} \mathcal{R}[t]^{n} \oplus \mathcal{R}[t]^{m} \doublearrow{\pi_{2}}{\pi_{2}^{\prime}} M_{2} \bigr\rangle = I_{1,*}(y) $$
To see this, let $E = \begin{pmatrix}0 & -1 \\ 1 & 0 \end{pmatrix}$
and consider the
Nenashev diagram
\begin{equation} \label{ttt}
\xymatrix{
 \mathcal{R}[t]^{n+m} \oplus \mathcal{R}[t]^{n+m} \ar@<-.5ex>[d]_{\alpha_{2} \oplus 1} \ar@<.5ex>[d]^{\alpha_{2} \oplus 1} \ar@<-.5ex>[r]_{E} \ar@<.5ex>[r]^{1}  & \mathcal{R}[t]^{n+m} \oplus \mathcal{R}[t]^{n+m} \ar@<-.5ex>[d]_{\alpha_{2} \oplus 1} \ar@<.5ex>[d]^{1 \oplus \alpha_{2}} \ar@<-.5ex>[r] \ar@<.5ex>[r]   & 0 \ar@<-.5ex>[d] \ar@<.5ex>[d]\\
 \mathcal{R}[t]^{n+m} \oplus \mathcal{R}[t]^{n+m} \ar@<-.5ex>[d]_{\pi_{2} \oplus 0} \ar@<.5ex>[d]^{\pi_{2}^{\prime} \oplus 0} \ar@<-.5ex>[r]_{E} \ar@<.5ex>[r]^{1} & \mathcal{R}[t]^{n+m} \oplus \mathcal{R}[t]^{n+m} \ar@<-.5ex>[d]_{\pi_{2} \oplus 0} \ar@<.5ex>[d]^{0 \oplus \pi_{2}^{\prime}} \ar@<-.5ex>[r] \ar@<.5ex>[r]  & 0 \ar@<-.5ex>[d] \ar@<.5ex>[d]\\
 M_{2} \ar@<-.5ex>[r]_{1} \ar@<.5ex>[r]^{1} & M_{2} \ar@<-.5ex>[r] \ar@<.5ex>[r] & 0 \\
}
\end{equation}

Using Nenashev's relations on \eqref{ttt} to justify
\eqref{forttt}, we have
%Since $E$ is elementary, we have
%$$0 =\big[\mathcal{R}[t]^{n+m} \oplus \mathcal{R}[t]^{n+m}  \doublearrow{1}{E} \mathcal{R}[t]^{n+m} \oplus \mathcal{R}[t]^{n+m} \doublearrow{}{} 0 \big]$$
%(see the remark following Nenashev's Theorem), and
\begin{align}
\bigl\langle l \bigr\rangle := \ &\bigl\langle\mathcal{R}[t]^{n+m} \oplus
\mathcal{R}[t]^{n+m} \doublearrow{\alpha_{2} \oplus 1}{1 \oplus
  \alpha_{2}} \mathcal{R}[t]^{n+m} \oplus \mathcal{R}[t]^{n+m}
\doublearrow{\pi_{2} \oplus 0}{0 \oplus \pi_{2}^{\prime} }
M_{2}\bigr\rangle\\ \label{forttt}
=\ &
\bigl\langle\mathcal{R}[t]^{n+m} \oplus \mathcal{R}[t]^{n+m} \doublearrow{\alpha_{2} \oplus 1}{\alpha_{2} \oplus 1} \mathcal{R}[t]^{n+m} \oplus \mathcal{R}[t]^{n+m} \doublearrow{\pi_{2} \oplus 0}{\pi_{2}^{\prime} \oplus 0} M_{2}\bigr\rangle
\\
=\ &
\bigl\langle\mathcal{R}[t]^{n} \oplus \mathcal{R}[t]^{m} \doublearrow{\alpha_{2}}{\alpha_{2}} \mathcal{R}[t]^{n} \oplus \mathcal{R}[t]^{m} \doublearrow{\pi_{2}}{\pi_{2}^{\prime}} M_{2}\bigr\rangle \nonumber\\
\  &
+ \bigl\langle\mathcal{R}[t]^{n+m} \oplus \mathcal{R}[t]^{n+m}
\doublearrow{1}{1} \mathcal{R}[t]^{n+m} \oplus \mathcal{R}[t]^{n+m}
\doublearrow{0}{0} M_{2}\bigr\rangle\nonumber
\ , \quad \text{by \ref{DAdefn}(ii)},
\\
=\ &
\bigl\langle\mathcal{R}[t]^{n} \oplus \mathcal{R}[t]^{m}
\doublearrow{\alpha_{2}}{\alpha_{2}} \mathcal{R}[t]^{n} \oplus
\mathcal{R}[t]^{m} \doublearrow{\pi_{2}}{\pi_{2}^{\prime}}
M_{2}\bigr\rangle + 0 \nonumber
\ , \quad \text{by } \ref{RemarkNenashev}(1),
\\
=\ &
\bigl\langle\mathcal{R}[t]^{n} \oplus \mathcal{R}[t]^{m}
\doublearrow{\alpha_{2}}{\alpha_{2}} \mathcal{R}[t]^{n} \oplus
\mathcal{R}[t]^{m} \doublearrow{\pi_{2}}{\pi_{2}^{\prime}}
M_{2}\bigr\rangle \nonumber \ , \quad
\text{by \eqref{resuming}}.
\end{align}
%proving the claim that $\bigl\langle l \bigr\rangle = I_{1,*}(y) = \bigl\langle \mathcal{R}[t]^{n} \oplus \mathcal{R}[t]^{m} \doublearrow{\alpha_{2}}{\alpha_{2}} \mathcal{R}[t]^{n} \oplus \mathcal{R}[t]^{m} \doublearrow{\pi_{2}}{\pi_{2}^{\prime}} M_{2} \bigr\rangle$\\
%\indent Together with \ref{arelation}, we now have that
Therefore
\begin{align*}
I_{1,*}(y)
&:=\
\bigl\langle \ell \bigr\rangle =
\bigl\langle r_{1} \bigr\rangle - \bigl\langle r_{2} \bigr\rangle \ ,
 \quad \text{by \eqref{arelation}},
\\
=\ & \bigl\langle\mathcal{R}[t]^{n+m} \oplus \mathcal{R}[t]^{n+m}  \doublearrow{\phi_{2}}{1} \mathcal{R}[t]^{n+m} \oplus \mathcal{R}[t]^{n+m} \doublearrow{}{} 0 \bigr\rangle\\
 &- \bigl\langle\mathcal{R}[t]^{n+m} \oplus \mathcal{R}[t]^{n+m}  \doublearrow{\phi_{1}}{1} \mathcal{R}[t]^{n+m} \oplus \mathcal{R}[t]^{n+m} \doublearrow{}{} 0 \bigr\rangle  \\
=\ & \bigl\langle\mathcal{R}[t]^{n+m} \oplus \mathcal{R}[t]^{n+m}
\doublearrow{\phi_{2} \phi_{1}^{-1}}{1} \mathcal{R}[t]^{n+m} \oplus
\mathcal{R}[t]^{n+m} \doublearrow{}{} 0 \bigr\rangle \ ,
\quad \text{by  } \ref{RemarkNenashev}(i).
\end{align*}
Applying the map $I_{2,*}^{-1}\colon
K_{1}(\mathcal{H}_{1}(\mathcal{R}[t])) \to K_{1}(\mathcal{R}[t])$, we
get
$$I_{2,*}^{-1}\colon \bigl\langle \mathcal{R}[t]^{n+m} \oplus \mathcal{R}[t]^{n+m}  \doublearrow{\phi_{2} \phi_{1}^{-1}}{1} \mathcal{R}[t]^{n+m} \oplus \mathcal{R}[t]^{n+m} \doublearrow{}{} 0 \bigr\rangle \mapsto [\phi_{2} \phi_{1}^{-1}] = [\phi_{2}] \in K_{1}(\mathcal{R}[t])$$
where the last equality comes from the fact that $\phi_{1}$ is
elementary. Altogether,
\[
x = i_{*}(y) = I_{2,*}^{-1}I_{1,*}(y) = [\phi_{2}]
\]
This finishes the proof of Theorem \ref{nonvanishingstabilizers}.
\end{proof}
\iffalse

\indent It is easy to check from \ref{diagram1} that we have $\phi_{2}\begin{pmatrix}\alpha_{2} & 0 \\ 0 & 1 \end{pmatrix} = \begin{pmatrix} 1 & 0 \\ 0 & \alpha_{2} \end{pmatrix} \phi_{1}$. Furthermore, $\begin{pmatrix} 1 & 0 \\ 0 & \alpha_{2} \end{pmatrix} \phi_{1}$ is elementary equivalent over $\mathcal{R}[t]$ to $\begin{pmatrix} \alpha_{2} & 0 \\ 0 & 1 \end{pmatrix}$, since $\phi_{1}$ is elementary. Thus $\phi_{2} \in \seort{(\alpha_{2} \oplus 1)}$. \\

Let $\alpha_{2}(0)$ denote the image of $\alpha_{2}$ under the map $\mathcal{R}[t] \stackrel{t \mapsto 0}\longrightarrow \mathcal{R}$. It follows that $\alpha_{2}(0)$ is invertible, and we get $\alpha_{2}(0)^{-1}\phi_{2}\alpha_{2}(0) \in \seort(\alpha_{2}(0)^{-1}\alpha_{2})$, since
$$(\alpha_{2}(0)^{-1}\phi_{2}\alpha_{2}(0))\alpha_{2}(0)^{-1}\alpha_{2} = \alpha_{2}(0)^{-1}\phi_{2}\alpha_{2}$$
and $\alpha_{2}(0)^{-1}\phi_{2}\alpha_{2}$ is elementary equivalent over $\mathcal{R}[t]$ to $\alpha_{2}(0)^{-1}\alpha_{2}$. But in $K_{1}(\mathcal{R}[t])$ we have
$$[\alpha_{2}(0)^{-1}\phi_{2}\alpha_{2}(0)] = [\phi_{2}]=[x]$$
and the constant term of $\alpha_{2}(0)^{-1}\alpha_{2}$ is $I$.
\fi
\begin{corollary}
For any finitely generated subgroup $H \subset NSK_{1}(\mathcal{R})$, there exists a matrix $A_{H}$ over $\mathcal{R}$ such that $H \subset \seort(A_{H})$.
\begin{proof}
This follows from Theorem \ref{nonvanishingstabilizers}, along with the fact that if $x \in \seort(A)$ and $y \in \seort(B)$, then $\{x,y\} \subset \seort(A \oplus B)$.
\end{proof}
\end{corollary}

Naturally, one asks what statement would replace Theorem
\ref{nonvanishingstabilizers}
if $\calR$ is not assumed to be commutative.
\begin{conjecture} \label{unionofesconjecture}
For a  ring $\calR$,
\[
\bigcup_{A\in \mathcal R} \ear =
\ker \Big( K_{1}(\mathcal{R}[t]) \stackrel{j_{*}}\longrightarrow
K_{1}(\Sigma_{RMP}^{-1}\mathcal{R}[t]) \Big)
\]
where $\Sigma_{RM}$ is the set of reverse monic matrices (those of the
form $A = I + \sum_{i=1}^{n}A_{i}t^{i}$) and $j_{*}$ is the map on $K_1$ induced by the localization map
$j$ from $\calR [t]$ into its Cohn localization with respect
to $\Sigma_{RM}$.
\end{conjecture}
The conjecture is true when $\calR$ is commutative, where
the Cohn localization with respect to $\Sigma_{RMP}$
can be identified with the standard
localization.\\

Finally, we leave the following problem regarding the structure of the elementary stabilizer groups in general. 

\begin{elstabprob} \label{elstabprob} 
For a square matrix $A$ over a ring $\calR$, find a satisfactory description of the elementary stabilizer $\ear$. In particular, when is $\ear$ trivial?
\end{elstabprob}

\section{SSE/SE($A,\calR$) =
  $\textnormal{NK}_1(\calR)/\ear$}
\label{secfitting}
%\begin{theorem} \label{central}
%Let $\calR$ be a ring. Suppose
%$A,B$ are square matrices over $\calR$. TFAE.
%\begin{enumerate}
%\item  $A$ and $B$ are SSE over $\calR$.
%\item
%$I-tA$ and $I-tB$ are
%$\textnormal{El} (\calR[t])$ equivalent.
%\end{enumerate}
%\end{theorem}
%
%Theorem \ref{central} is an immediate
%corollary of Theorem \ref{finecentral},
%proved later.
%\red{From here to the next red is a rewriting. }

In this section we prove one of our main results,
  Theorem \ref{aplusn}, assuming the main result of the next section,
  Theorem \ref{finecentral}.

%\red{In this section, assuming Theorem \ref{finecentral},
%we prove our  main result, Theorem \ref{aplusn}.}

To begin we state a
matrix version of Theorem \ref{WarfieldLemma}.
A slightly different formulation
of Theorem \ref{fittingtheorem} is given in
\cite[Lemma 9.1]{BS05}, with further commentary.
We say a $k\times k$ matrix $A$ over $\calR$ is injective
if matrix multiplication  $x\mapsto Ax$ defines an injective map
$  \calR^k \to \calR^k$.

\begin{theorem} \label{fittingtheorem}\cite{Fitting1936}
Suppose $A$ and
$B$ are square injective matrices over a ring $\calR$ and the
$\calR$-modules
$\coker (A)$ and $\coker (B)$ are isomorphic.
Then there are identity matrices $I_m,I_n$;
$k\in \N$;  $U$ in $\GL (k,\calR)$;
and  $V$ in $\El (k,\calR)$ such that
$U(A\oplus I_m)V = B\oplus I_n$.
\end{theorem}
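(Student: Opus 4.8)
The plan is to deduce this from the ``stable uniqueness of presentations'' principle, building the invertible matrices $U,V$ by hand. Set $M=\cok(A)$, $M'=\cok(B)$, and fix an isomorphism $f\colon M\to M'$. Since $A$ is injective, $0\to R^{a}\xrightarrow{A}R^{a}\xrightarrow{\pi_{A}}M\to 0$ is exact (with $\pi_{A}$ the quotient map), and likewise $0\to R^{b}\xrightarrow{B}R^{b}\xrightarrow{\pi_{B}}M'\to 0$; so $A$ and $B$ are presentation matrices for $M$, $M'$ relative to $\pi_{A},\pi_{B}$. I want to show that after adjoining identity blocks both present the \emph{same} module relative to generating systems that differ only by a coordinate permutation, which forces $\GL$-equivalence.

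First I would record a change-of-generators computation. If $\pi\colon R^{a}\to M$ is a surjection and $\pi'\colon R^{b}\to M$ is arbitrary, choose a lift $\widetilde{\pi'}\colon R^{b}\to R^{a}$ with $\pi\widetilde{\pi'}=\pi'$ (possible, $R^{b}$ being projective). A short diagram chase shows the map $(x,y)\mapsto(x+\widetilde{\pi'}y,\,y)$ carries $\ker\big((\pi,\pi')\colon R^{a}\oplus R^{b}\to M\big)$ isomorphically onto $\ker(\pi)\oplus R^{b}$, and hence that $\begin{pmatrix}A & -\widetilde{\pi'}\\ 0 & I_{b}\end{pmatrix}$ is a presentation matrix of $M$ relative to $(\pi,\pi')$ when $A$ presents $M$ relative to $\pi$. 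Since $\begin{pmatrix}I_{a} & \widetilde{\pi'}\\ 0 & I_{b}\end{pmatrix}\begin{pmatrix}A & -\widetilde{\pi'}\\ 0 & I_{b}\end{pmatrix}=A\oplus I_{b}$, this matrix is $\GL$-equivalent to $A\oplus I_{b}$. Taking $\pi=\pi_{A}$ and $\pi'=f\pi_{B}$ shows $A\oplus I_{b}$ is $\GL$-equivalent to a presentation matrix $N_{1}$ of $M$ relative to $\rho_{1}:=(\pi_{A},f\pi_{B})\colon R^{a}\oplus R^{b}\to M$; symmetrically $B\oplus I_{a}$ is $\GL$-equivalent to a presentation matrix $N_{2}$ of $M'$ relative to $(\pi_{B},f^{-1}\pi_{A})$, and as $f$ is an isomorphism $N_{2}$ also presents $M$, now relative to $f\rho_{2}:=(f\pi_{B},\pi_{A})\colon R^{b}\oplus R^{a}\to M$.

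Next I would compare the two surjections. Writing $\tau\colon R^{a}\oplus R^{b}\to R^{b}\oplus R^{a}$ for the coordinate swap, one has $\rho_{1}=(f\rho_{2})\circ\tau$, so $\operatorname{im}(\tau N_{1})=\tau(\ker\rho_{1})=\ker(f\rho_{2})=\operatorname{im}(N_{2})$. Both $\tau N_{1}$ and $N_{2}$ are \emph{injective} maps $R^{a+b}\to R^{a+b}$ with this common image, which is therefore free; hence there is a unique $g$ with $N_{2}g=\tau N_{1}$, and $g\in\GL(a+b,R)$ because $N_{2}$ restricts to an isomorphism onto its image. Chaining $A\oplus I_{b}\sim N_{1}\sim N_{2}\sim B\oplus I_{a}$ produces the desired $U,V$, with $k=a+b$, $m=b$, $n=a$.

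The content here is almost all bookkeeping; the one place the hypotheses are genuinely needed is in ensuring that at each stage we are manipulating honest matrices (maps of free modules) — in particular that the kernels arising are free and that $\tau N_{1}$ and $N_{2}$ are injective — which is exactly where injectivity of $A$ and $B$ is used, and without which one is left only with Schanuel's lemma at the level of modules. A structurally cleaner but less constructive alternative: the two-term complexes $R^{a}\xrightarrow{A}R^{a}$ and $R^{b}\xrightarrow{B}R^{b}$ are both free resolutions of $M\cong M'$, hence chain homotopy equivalent, and chain homotopy equivalent bounded complexes of finitely generated free modules become isomorphic after adjoining elementary complexes $R^{p}\xrightarrow{=}R^{p}$; reading this off in degrees $0$ and $1$ yields the theorem. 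I would still prefer the hands-on argument, since it delivers $U$ and $V$ explicitly.
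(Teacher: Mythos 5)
Your argument is correct and is essentially the standard proof of Fitting's lemma; the paper itself supplies no proof of Theorem~\ref{fittingtheorem}, citing \cite{Fitting1936} and referring the reader to Warfield \cite{Warfield1978} for an exposition, so there is nothing in the paper to compare against, but your construction is the expected one and it works for an arbitrary (not necessarily commutative) ring. One bookkeeping slip worth fixing: having fixed $f\colon M\to M'$, the auxiliary surjection onto $M$ should be $\pi'=f^{-1}\pi_{B}\colon R^{b}\to M$ (the composite $f\pi_{B}$ you wrote does not typecheck), and correspondingly $\rho_{2}=(\pi_{B},f\pi_{A})\colon R^{b}\oplus R^{a}\to M'$ with $\rho_{1}=(f^{-1}\rho_{2})\circ\tau$; you have $f$ and $f^{-1}$ systematically swapped, which is harmless but would confuse a reader. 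One small sharpening of your closing remark: the freeness of the common image is never actually used. What matters is only that $\tau N_{1}$ and $N_{2}$ are injective endomorphisms of $R^{a+b}$ with equal image, so that each is an isomorphism onto that image and $g:=N_{2}^{-1}(\tau N_{1})$ is automatically an automorphism of $R^{a+b}$; that invertibility of $g$, rather than freeness of the image, is precisely what the injectivity hypothesis on $A$ and $B$ purchases, and without it one is indeed left only with Schanuel's lemma, as you say.
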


Next, we compile some characterizations of shift equivalence
as a theorem.
The equivalence of (1), (2) and (3) below is well known.
The equivalence of (1) and (4) is what we need for Theorem
\ref{aplusn}.
For an $n\times n$ matrix $A$ over a ring $\calR$,
%$ \varinjlim\{\Rcal^{n},A\}$
  the $\calR[t]$ module
$\overline{\Rcal_{A}}$ is
 direct limit $\calR$-module
$\Rcal^{n} \stackrel{A}\to \Rcal^{n} \stackrel{A}\to \Rcal^{n}
\stackrel{A}\to
\cdots$,
  with $t$ acting by $[v,i]\mapsto [v,i+1]$ (inverse to $[v,i]\mapsto [Av,i]$).

For $n\in \N$, $0_n$ and $I_n$ denote the $n\times n$ zero and
identity matrices.
For a square matrix $A$ over $\calR$,
$\ear$ denotes $\seort (I-tA)$,
as in \eqref{eardefn}.

\begin{theorem} \label{selisttheorem}
Suppose $A$ and $B$ are square matrices over a ring $\Rcal$. Then
the following are equivalent.
\begin{enumerate}
\item $A$ and $B$ are shift equivalent over $\Rcal$.
\item $\overline{\Rcal_{A}}$ and
$\overline{\Rcal_{B}} $ are isomorphic $\calR[t]$ modules.
\item  $\coker (I-tA)$ and $\coker (I-tB)$
are isomorphic $\calR[t]$ modules.
\item  There are
$k,m,n\in \N$ and $U,V$ in $\GL (k,\calR[t])$ such that \\
$U\big(( I-t A)\oplus I_m \big)V = \big(  (I-t B)\oplus I_n\big)$ ,
i.e., \\
$U\big(( I-t (A\oplus 0_m) \big)V = \big(  I-t (B\oplus 0_n)\big)$ .
\end{enumerate}
If $A$ and $B$ are shift equivalent over $\calR$, then
$\ear = \ebr$.
\end{theorem}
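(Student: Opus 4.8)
The plan is to establish the cycle of implications $(1)\Rightarrow(2)\Rightarrow(3)\Rightarrow(4)\Rightarrow(1)$, with the equivalence of $(1)$, $(2)$ and $(3)$ being the classical part and the main new content being the bridge to $(4)$ via Fitting's Theorem \ref{fittingtheorem}. First I would prove $(1)\Rightarrow(2)$: given a lag-$\ell$ shift equivalence $(R,S)$ with $RS=A^\ell$, $SR=B^\ell$, $RB=AR$, $SA=BS$, the maps $R$ and $S$ induce, on the telescoped direct limit modules, mutually inverse $R[t]$-module maps $\overline{\Rcal_A}\to\overline{\Rcal_B}$ and $\overline{\Rcal_B}\to\overline{\Rcal_A}$ (up to the shift by $t^\ell$, which is invertible on these limits since $t$ acts invertibly there by construction), so $\overline{\Rcal_A}\cong\overline{\Rcal_B}$ as $R[t]$-modules. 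For $(2)\Rightarrow(1)$ one recovers $R,S$ from an isomorphism $\phi$ and its inverse, clearing denominators in $t$ to land back in the matrix rings and reading off a suitable lag; this is the standard argument of Williams, essentially the observation that $\overline{\Rcal_A}$ is a finitely generated $R[t]$-module presented by $I-tA$.

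Next, $(2)\Leftrightarrow(3)$ is the identification of the two module descriptions: the direct limit $\overline{\Rcal_A}$ is precisely the cokernel of the map $I-tA\colon R[t]^n\to R[t]^n$, since the presentation matrix $I-tA$ encodes exactly the relations $[v,i+1]=[Av,i]$ telescoping the system. So I would simply verify this canonical isomorphism $\cok(I-tA)\cong\overline{\Rcal_A}$ of $R[t]$-modules and note it is natural, giving $(2)\Leftrightarrow(3)$ immediately.

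For $(3)\Rightarrow(4)$ I would apply Fitting's theorem (Theorem \ref{fittingtheorem}) to the matrices $I-tA$ and $I-tB$ over the ring $R[t]$. These are injective as maps on $R[t]^n$: the kernel would consist of polynomial vectors $v(t)$ with $v(t)=tAv(t)$, and comparing top-degree coefficients forces $v=0$. Given $\cok(I-tA)\cong\cok(I-tB)$ from $(3)$, Fitting supplies $m,n,k$ and $U,V\in\GL(k,R[t])$ with $U\big((I-tA)\oplus I_m\big)V=(I-tB)\oplus I_n$; the reformulation with $I-t(A\oplus 0_m)$ is just Remark \ref{remarkableabuse}, since $(I-tA)\oplus I_m=I_{n+m}-t(A\oplus 0_m)$. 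Finally $(4)\Rightarrow(1)$ runs the implications backward: a $\GL(R[t])$-equivalence of $(I-t(A\oplus 0_m))$ and $(I-t(B\oplus 0_n))$ gives an isomorphism of cokernels, hence of the limit modules, hence (by $(2)\Rightarrow(1)$, applied to $A\oplus 0_m$ and $B\oplus 0_n$) a shift equivalence between $A\oplus 0_m$ and $B\oplus 0_n$; and one checks directly that a matrix is SE-$R$ to any of its zero-paddings $A\oplus 0_m$ (indeed they are even SSE-$R$ by the zero-extension relations of Section \ref{subsec:strongshiftequivalence}), so $A$ is SE-$R$ to $B$. The main obstacle I anticipate is the bookkeeping in $(1)\Leftrightarrow(2)$ — tracking lags and the invertibility of $t$ on the telescoped limit carefully enough that the denominators clear and one genuinely returns to matrices over $R$ rather than $R[t,t^{-1}]$ — but this is classical and the injectivity input needed for Fitting is the only genuinely ring-theoretic point, and it is elementary.
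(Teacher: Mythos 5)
Your proposal follows essentially the same route as the paper: the classical identifications $(1)\iff(2)$ and $\overline{\Rcal_{A}}\cong\cok(I-tA)$ for $(2)\iff(3)$, together with Fitting's Theorem \ref{fittingtheorem} applied to the injective matrices $I-tA$, $I-tB$ over $\calR[t]$ for $(3)\iff(4)$. One small correction: injectivity of $I-tA$ follows by comparing \emph{lowest}-degree coefficients in $v(t)=tAv(t)$ (the right-hand side has zero constant term, so $v_0=0$, and then inductively $v_i=Av_{i-1}=0$); comparing top-degree coefficients only yields $Av_D=0$, which does not suffice when $A$ is not injective.
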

\begin{proof}
$(1)\iff (2)$ See \cite[p.122]{BH93}.
This connection is due to Krieger;
the result for $\Rcal = \Z$ was a piece of
his introduction of dimension groups to symbolic
dynamics  \cite{KriegerDimMark1980}.
Another proof for the
case $\Rcal = \mathbb{Z}$ can
be found in
\cite[7.5.6--7.5.7]{LindMarcus1995}.

$(2)\iff (3)$ The map
$[v,i] \mapsto [t^{i}v]$
defines an
$\Rcal[t]$-module isomorphism
$\overline{\Rcal_{A}}\to
\coker(I-tA)$.
This connection was introduced
by Kim, Roush and
Wagoner \cite{S7},  for $\Rcal = \Z$.

$(4)\implies (3)$ Clear.

$(3)\implies (4)$
$I-tA$ and  $I-tB$ are injective matrices over $\calR[t]$,
so (4) follows by Theorem \ref{fittingtheorem}.

Because $(1)$ implies $(4)$, the final
claim of the theorem
follows from the final claim of
Proposition \ref{nopun}.
\end{proof}

We let $\sim$ denote $\el (\calR[t])$ equivalence.

%\red{``the next red''}

% Theorem \ref{fittingtheorem} is not true if the
%injectivity hypothesis is removed;
%\red{State Fitting for rectangulars? The identity matrices are added
%such that the two matrices acquire the same number of rows,
%but they don't have to have the same number of columns. So U
%is square but V might not be. The original action is on column
%vectors.}

\begin{theorem}\label{aplusn} Let $\calR$ be a ring, and
$A$ a square matrix over $\calR$. The following hold.
\begin{enumerate}
\item
If $B$ is shift equivalent over $\Rcal$ to $A$,
then there is a nilpotent matrix
$N$ over $\calR$ such that $B$ is SSE over $\calR$
to the matrix $A\oplus N$.
\item
%\label{nilpotentdsum}
For nilpotent matrices $N_1,N_2$
over $\calR$, the matrices $A\oplus N_1$
and  $A\oplus N_2$ are SSE over $\calR$ iff
$I-tN_1$ and $I-tN_2$ are the same
element in $NK_{1}(\calR)/\ear $.
\item
If $A$ is shift equivalent over $\Rcal$ to
    a matrix which is nilpotent, invertible or idempotent,
    then $\ear $ is the trivial group.
\end{enumerate}
\end{theorem}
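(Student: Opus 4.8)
\textbf{Proof strategy for Theorem \ref{aplusn}.}

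The plan is to deduce both parts from the translation results \eqref{seisgl}, \eqref{sseisel} and from the triviality of the elementary stabilizer (Theorem \ref{trivialkseo}, via Proposition \ref{nopun}), together with the classical identification $\nil(\calR)\cong NK_1(\calR)$ through $[N]\mapsto [I-tN]$.

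For part (1), suppose $B$ is SE-$\calR$ to $A$. By Theorem \ref{selisttheorem}, $(1)\iff(4)$, there are identity blocks and $U,V\in\GL(R[t])$ with $U\big(I-t(A\oplus 0_m)\big)V=I-t(B\oplus 0_n)$; equivalently, by \eqref{seisgl}, $I-tA$ and $I-tB$ are $\GL(R[t])$ equivalent. Now I would invoke the description of $\orbglrt(I-tA)$ as a union of $\el(R[t])$ classes $\orbelrt(I-tC)$ with $C$ over $\calR$, as recalled before \eqref{main2}. Thus $I-tB$ lies in $\orbelrt(I-tC)$ for some square matrix $C$ over $\calR$ that is $\GL(R[t])$ equivalent to $A$. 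By \eqref{sseisel}, $B$ is SSE-$\calR$ to $C$. The task is then to show $C$ can be taken of the form $A\oplus N$ with $N$ nilpotent; this is exactly the content of the bijection \eqref{main2}, whose proof reduces (as the introduction explains) to Theorem \ref{trivialkseo}. Concretely: applying Proposition \ref{nopun} with the ring $R[t]$ and the matrix $I-tA$, the set $\{\orbelrt(I-tC):I-tC\in\orbglrt(I-tA)\}$ is in bijection with $K_1(R[t])/\ear[t](I-tA)$; by Theorem \ref{trivialkseo} the subgroup $\ear[t](I-tA)$ is trivial, so this set is in bijection with $K_1(R[t])\cong NK_1(R)\oplus K_1(R)$. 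One checks that the $\el(R[t])$ class of $I-tA$ itself corresponds to the $K_1(R)$ summand (take $U$ with $t\mapsto 0$ landing in $\GL(R)$), so the fibre over $\orbglrt(I-tA)$ is parametrized by $NK_1(R)$, and every class contains a representative $I-tN$ with $N$ nilpotent over $\calR$. Multiplying, $I-t(A\oplus N)=(I-tA)\oplus(I-tN)$ realizes each class, so $I-tB$ is $\el(R[t])$ equivalent to $I-t(A\oplus N)$ for a suitable nilpotent $N$, hence by \eqref{sseisel} $B$ is SSE-$\calR$ to $A\oplus N$.

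For part (2), observe that $A\oplus N_1$ and $A\oplus N_2$ are SSE-$\calR$ iff, by \eqref{sseisel}, $I-t(A\oplus N_1)$ and $I-t(A\oplus N_2)$ are $\el(R[t])$ equivalent, i.e. $\orbelrt(I-t(A\oplus N_1))=\orbelrt(I-t(A\oplus N_2))$. Under the bijection of \eqref{main2} these correspond to $[N_1]$ and $[N_2]$ in $\nil(\calR)$, so they coincide iff $[N_1]=[N_2]$ in $\nil(\calR)$, equivalently iff $[I-tN_1]=[I-tN_2]$ in $NK_1(\calR)$ under the classical isomorphism recalled in Section \ref{backgroundsec}. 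To make the ``if'' direction self-contained without quoting \eqref{main2} as a black box, I would argue directly: if $[I-tN_1]=[I-tN_2]$ in $NK_1(R[t])\subset K_1(R[t])$, then $I-tN_1$ and $I-tN_2$ differ by an element of $\el(R[t])$ after stabilization, and since both are injective with isomorphic cokernels as $R[t]$-modules (both cokernels are free $R$-modules of the same rank, $t$ acting nilpotently — the modules $\overline{\calR_{N_i}}$ are finite free), Theorem \ref{fittingtheorem} applied over $R[t]$ produces the needed $\GL(R[t])$ equivalence; upgrading from $\GL$ to $\el$ is precisely where the triviality of the elementary stabilizer $\seort(I-tA)$ with $A=0$ (so $\seort(I)$, or more precisely Theorem \ref{trivialkseo}) is used. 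Then $A\oplus N_1\sim A\oplus N_2$ follows by taking direct sum with $I-tA$.

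\textbf{Main obstacle.} The substantive point, and the place where the heavy K-theory enters, is the ``only if'' direction of part (2): deducing from $\el(R[t])$-equivalence of $I-t(A\oplus N_1)$ and $I-t(A\oplus N_2)$ that $[I-tN_1]=[I-tN_2]$ in $NK_1(R)$. This is not formal — two matrices can be $\GL$-equivalent (hence SE-$\calR$, hence $\nil$ classes controlled) while their $\el$-classes differ by exactly the stabilizer subgroup, and the assertion that this subgroup is trivial for $I-tA$ is Theorem \ref{trivialkseo}, resting on Theorem \ref{fredholmk1injective} and thus on the Neeman--Ranicki six-term sequence. I expect the bookkeeping to verify that the bijection \eqref{main2} is \emph{well-defined} (independent of the choice of nilpotent representative $N$ within its $\nil$ class, and compatible with the map to $NK_1$) to be the most delicate routine part; everything else is a repackaging of \eqref{seisgl}, \eqref{sseisel}, Proposition \ref{nopun}, and Fitting's theorem.
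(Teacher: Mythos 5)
Your proposal is correct and follows essentially the same route as the paper: translate SSE-$\calR$ to $\el(R[t])$-equivalence of $I-tA$ via Theorem~\ref{finecentral} (= \eqref{sseisel}), translate SE-$\calR$ to $\GL(R[t])$-equivalence via Fitting and Theorem~\ref{selisttheorem}, and close the gap with Proposition~\ref{nopun} plus Theorem~\ref{trivialkseo}, the triviality of $\seort(I-tA)$ in $K_1(R[t])$. Part (1) is the paper's argument presented in the more abstract packaging of \eqref{main2}; part (2) matches the paper's use of $(I-t(A\oplus N))\sim(I-tN)(I-tA)$ together with Prop.~\ref{nopun} and Thm.~\ref{trivialkseo}. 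One small correction to your aside in part (2): the ``if'' direction does not need Theorem~\ref{fittingtheorem} at all, since $[I-tN_1]=[I-tN_2]$ in $K_1(R[t])$ already gives $\el(R[t])$-equivalence directly (and your cokernel claim there is off: for $N$ nilpotent, $I-tN$ is a unit in $\GL(R[t])$, so $\cok_{R[t]}(I-tN)=0$, not a nonzero free $R$-module); the elementary stabilizer is what is needed for the ``only if'' direction, exactly as you identify in your closing paragraph.
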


\begin{proof}
  For the proof of  (1), suppose  $B$ is   shift equivalent over
  $\calR$ to $A$.
Let $k,m,n,U,V$ be as in (4) of Theorem \ref{selisttheorem}.
After replacing $A$ with $A\oplus 0_m $ and
$B$ with $B\oplus 0_n$ (which is harmless),
we have
$(I-tB) = U(I-tA)V$ .
%Consequently we have $U,V$ in $ GL(\calR[t])$
%such that $(I-tB)_{\infty}= U((I-tA)_{\infty})V$.
%We use e.g. $P\sim Q$ to indicate
%$P\oplus I_{\infty}$ and  $Q\oplus I_{\infty}$
%%(or just $P,Q$ if they are already infinite)
%are
%$\text{El}(R[t])$ equivalent, i.e., there are $U,V$ in
%$\text{El}(R[t])$ such that $U(P\oplus I_{\infty})V=Q\oplus I_{\infty}$.
%From here we will drop the $_{\infty}$ subscripts and
%infer their presence or absence by context.

Because
$\left(\begin{smallmatrix} VU & 0 \\ 0 & I
\end{smallmatrix}\right)
\left(\begin{smallmatrix} I-tA & 0 \\ 0 & I
\end{smallmatrix}\right)
=
\left(\begin{smallmatrix} V & 0 \\ 0 & V^{-1}
\end{smallmatrix}\right)
\left(\begin{smallmatrix} U(I-tA)V & 0 \\ 0 & I
\end{smallmatrix}\right)
\left(\begin{smallmatrix} V^{-1} & 0 \\ 0 & V
\end{smallmatrix}\right)
$ ,
%Since $U(I-tA)V \sim VU(I-tA)$ (via
%$\left(\begin{smallmatrix} V & 0 \\ 0 & V^{-1}
%\end{smallmatrix}\right)$ on the left
%and
%$\left(\begin{smallmatrix} V^{-1} & 0 \\ 0 & V
%\end{smallmatrix}\right)$ on the right),
we have $I-tB \sim W(I-tA)$, where
$W=VU$. Setting $t = 0$, we see
$W$ represents an element of $ NK_{1}
(\calR)$. So, for some $j$,
after replacing $W$ with $W\oplus I_j$
 there exists $N$ nilpotent over $\calR$
and $E$ and $F$ elementary over $\calR[t]$
such
that $EWF = I-tN$.
After replacing $A$ with $A\oplus 0_j$, we have
\begin{align*}
I-tB \sim
W(I-tA) &\sim  ( I-tA ) \oplus W \sim
\begin{pmatrix} I & 0 \\ 0 & E \end{pmatrix}
\begin{pmatrix} I-tA & 0 \\ 0 & W\end{pmatrix}
\begin{pmatrix} I & 0  \\ 0 & F\end{pmatrix} \\
& = (I-tA) \oplus ( I-tN )
= I -t(A\oplus N) \ .
\end{align*}
Now Theorem \ref{finecentral} implies $B$
is strong shift equivalent over $\calR$  to $A \oplus N$.
This proves (1).

%\label{nilpotentdsum}
For (2), suppose
$N_1,N_2$ are  nilpotent matrices
over $\calR$. By Theorem \ref{finecentral},
 the matrices $A\oplus N_1$
and  $A\oplus N_2$ are SSE over $\calR$ iff
$(I-t(A\oplus N_1)) \sim (I-t(A\oplus N_2))$.
For $N$ nilpotent,
$(I-t(A\oplus N) \sim (I-tN)(I-tA)$. Therefore
$A\oplus N_1$  and
$A\oplus N_2$ are SSE over $\calR$ iff
$(I-tN_1)(I-tA) \sim (I-tN_2)(I-tA)$.
%\[
%\begin{pmatrix}
%I-tA & 0 \\ 0 & I-tN_2
%\end{pmatrix}
%=
%\begin{pmatrix}
%I & 0 \\ 0 & I-tN_1
%\end{pmatrix}^{-1}
%\begin{pmatrix}
%I-tA & 0 \\ 0 & I-tN_1
%\end{pmatrix}
%\begin{pmatrix}
%I & 0 \\ 0 & I-tN_2
%\end{pmatrix}  \ ,
%\]
%it follows from
By Proposition \ref{nopun}, this holds
%\[
%(I-tN_1)(I-tA) \sim (I-tN_2)(I-tA)
%(I-t(A\oplus N_1)) \sim (I-t(A\oplus N_2))
iff
$(I-tN_1)^{-1}
(I-tN_2)
\in
\seo (I-tA)
$ .
By Theorem \ref{classesinOmega},
this inclusion   holds iff $I-tN_1=I-tN_2$ in
$K_1(\calR[t])/ \ear$
  (equivalently,  in $NK_1(\calR) / \ear $). This proves (2).

(3) holds by Theorem \ref{classesinOmega} and
  the final claim of Theorem \ref{selisttheorem}. Note, the
  nilpotent matrices form the shift equivalence class of
the zero matrices.
\end{proof}

\begin{corollary}\label{nilcor} Suppose $\nk_1(\calR)$ is trivial
  (for example, when $\calR$ is a Noetherian regular ring).
Then SE-$\calR$ implies SSE-$\calR$.
\end{corollary}
Corollary \ref{nilcor} answers in the affirmative
a question
of Wagoner \cite[Sec. 9, Problem Number 3]{Wagoner99}: does
 SE-$\calR$ implies SSE-$\calR$ when $\calR$ is
a commutative regular ring?

Given $\calR$ and a square matrix $B$ over
  $\calR$, let
$[B]_{SSE}$ denote the SSE-$\calR$ class of $B$ and
let $[B]_{SE}$ denote the SE-$\calR$ class. For
a square matrix $A$ over $\calR$, define
\begin{equation}
  \text{SSE}/\text{SE}(A,\calR)
  = \{[B]_{SSE} \hspace{.05in} \colon  \hspace{.05in}
  [A]_{SE} = [B]_{SE}\}\ .
  \end{equation}
We can now give a short summary
of the correspondence provided by Theorem \ref{aplusn}.

\begin{theorem}\label{aplusn2}
Let  $N$ range over
nilpotent matrices over $\calR$. Then
for any  square matrix $A$ over $\calR$,
the map $[I-tN] \to [A \oplus N]_{SSE}$
 is a well-defined bijection
   \[
   NK_{1}(\calR) / \ear \to
  \text{SSE}/\text{SE}(A,\calR)
\]
Equivalently, the map $[N]\to [A \oplus N]_{SSE}$
is a well defined bijection
\[
\nil (\calR)/\enilar \to
 \text{SSE}/\text{SE}(A,\calR)
\]
with $\enilar = \{ [N] \in \nil (\calR):
[I-tN] \in \ear \}$ .
\end{theorem}

%
%Examining the proof
%of Theorem \ref{aplusn}, we can extract a description of
%the refinement of $\GL (R[t])$ equivalence by
%$\EL (R[t])$ equivalence for the stable classes of
%matrices which are of the form $I-tA$, with $A$ over $\calR$, with respect to the
%stabilization identifying $A$ and $A\oplus I_n$ for all $n$.

Using Theorems \ref{selisttheorem} and \ref{finecentral},
we record a restatement of Theorem \ref{aplusn}.

\begin{theorem}\label{aplusn3}  Let $\calR$ be a ring.
%, and let $P\sim Q$ mean that there
%are identity matrices $I_m, I_n$ such that
%$(P\oplus I_m)$ and $(Q\oplus I_n)$ are $\EL (R[t])$ equivalent.
 Then the following hold.
\begin{enumerate}
%\item
%If $A, B$ are square matrices over $\calR[t]$ such that $A, B\in \Omega_{+}$, and the $R[t]$-modules
%$\coker (A)$, $\coker(B)$ are isomorphic,
%then there is a matrix $M$ in $NK_1({\calR})$
%such that $B \sim (A\oplus M)$.
\item
If $A, B$ are square matrices over $\calR$ such that the $\calR[t]$-modules
$\coker(I-tA)$, $\coker(I-tB)$ are isomorphic,
then there is a nilpotent matrix $N$ over $\calR$ such that $I-tB \sim I-t(A\oplus N)$.
\item
%\label{nilpotentdsum}
Suppose $N_1,N_2$ are nilpotent matrices over $\calR$. Then
$$I-t(A\oplus N_1) \sim
I-t(A\oplus N_2)$$ iff
$[I-tN_1]$ and $[I-tN_2]$ are the same
element in $NK_{1}(\calR) / \ear $.
\end{enumerate}
\end{theorem}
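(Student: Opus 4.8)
The plan is to obtain Theorem \ref{aplusn3} as a purely formal consequence of Theorem \ref{aplusn}, using Theorem \ref{selisttheorem} to translate the module-theoretic hypothesis into shift equivalence and Theorem \ref{finecentral} to translate strong shift equivalence into $\el(R[t])$ equivalence. Recall that $\sim$ denotes $\el(R[t])$ equivalence, that $I-tA$ and $I-tB$ are injective matrices over $R[t]$, and that by Theorem \ref{finecentral} two finite square matrices $C,D$ over $\calR$ are SSE-$\calR$ precisely when $I-tC \sim I-tD$.

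For part (1), I would first invoke the equivalence $(1)\iff(3)$ of Theorem \ref{selisttheorem}: the hypothesis that $\cok(I-tA)$ and $\cok(I-tB)$ are isomorphic as $R[t]$-modules says exactly that $A$ and $B$ are shift equivalent over $\calR$. Theorem \ref{aplusn}(1) then supplies a nilpotent matrix $N$ over $\calR$ with $B$ SSE-$\calR$ to $A\oplus N$, and a final application of Theorem \ref{finecentral} converts this into $I-tB \sim I-t(A\oplus N)$, which is the desired conclusion.

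For part (2), Theorem \ref{finecentral} identifies the relation $I-t(A\oplus N_1) \sim I-t(A\oplus N_2)$ with the assertion that $A\oplus N_1$ and $A\oplus N_2$ are SSE-$\calR$, and Theorem \ref{aplusn}(2) states that this holds if and only if $[I-tN_1]$ and $[I-tN_2]$ coincide in $NK_1(\calR)$. Stringing these two equivalences together yields the claim.

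I do not expect any genuine obstacle here; the mathematical content is entirely carried by Theorems \ref{aplusn}, \ref{selisttheorem} and \ref{finecentral}, and the only point requiring care is that every matrix in play is a finite matrix of the form $I-tC$ with $C$ over $\calR$, so that the hypotheses of Theorem \ref{finecentral} are literally satisfied and the passage back and forth between SSE-$\calR$ and $\el(R[t])$ equivalence introduces no stabilization subtleties beyond those already folded into the conventions of Notation \ref{abuse}.
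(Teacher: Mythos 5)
Your proposal is correct and is essentially identical to the paper's own treatment: the paper explicitly presents Theorem \ref{aplusn3} as a restatement of Theorem \ref{aplusn} obtained by combining Theorem \ref{selisttheorem} (to identify the cokernel hypothesis with shift equivalence) and Theorem \ref{finecentral} (to identify SSE-$\calR$ with $\el(\calR[t])$ equivalence of the matrices $I-tC$). There is nothing to add.
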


\section{SSE as elementary equivalence} \label{sseaselsec}
%section{Strong shift equivalence as elementary equivalence}

The purpose of this section is to prove
Theorem \ref{finecentral},
  our central result for connecting
  strong shift equivalence and algebraic $K$-theory.
%Theorem \ref{central},
%on which all our results depend.
%, is an immediate corollary.
To prepare for its statement,
%the statement of Theorem \ref{finecentral},
 we give some definitions.

\begin{definition} \label{asharpdefinition}
Given $A\in t\rr [t]$, choose $n\in \N$ and $k\in \N$
such that $A_1, \dots A_k$ are $n\times n$ matrices over
$\rr$ such that
\[
A = \sum_{i=1}^k t^iA_i
\]
and define a finite matrix $\As= \mathcal A^{\Box (k,n)}$ over $\rr$ by the
following block form,
in which  every block is $n\times n$:
\[
 \As =
\begin{pmatrix}
A_1 & A_2 &A_3& \dots &A_{k-2}&A_{k-1} & A_k \\
I   & 0   &0  & \dots & 0     & 0      & 0  \\
0   & I   &0  & \dots & 0     & 0      & 0  \\
0   & 0   & I & \dots & 0     & 0      & 0  \\
\dots &\dots &\dots &\dots &\dots &\dots &\dots  \\
0   & 0   & 0 & \dots &I      & 0     & 0  \\
0   & 0   & 0 & \dots &0      & I     & 0
\end{pmatrix} \ .
\]
\end{definition}
In the definition, there is some freedom in
the choice of $\As$: $k$ can be increased by
using zero matrices, and $n$ can be increased
by filling additional entries of the $A_i$
with zero. These choices do not affect the
SSE-$\rr$ class of $\As$.\\

%\begin{remark}
%The $\As$ defined here differs from that defined in \cite{BW04}.
%\end{remark}

With $\sim$ denoting $\el (\calR[t])$ equivalence,
recall that for finite matrices $I-A$ and $I-B$,
$I-A \sim I-B$ by definition means
 $(I-A)_{\stabone} \sim (I-B)_{\stabone}$.

\begin{theorem}\label{finecentral}
 Let $\rr$ be a ring. Then there is a
 bijection between the following sets:
\begin{itemize}
\item the set of
$\textnormal{El}(\rr [t])$ equivalence classes of
%$\N \times \N$
square matrices $I-A$ with $A$
over $t \rr [t]$
\item
the set of SSE-$\rr$ classes of square matrices
 over $\rr$.
\end{itemize}
The  map  to SSE-$\rr$ classes is induced by
the map
 $ I-A \mapsto \As$. The inverse map (from
the set of SSE-$\rr$ classes)
 is induced by the map sending $A$ over $\rr$ to
the
%$\N \times \N$
matrix
$ I-tA$.
\end{theorem}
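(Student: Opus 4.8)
The plan is to establish that the two maps in the statement are mutually inverse after passing to the stated equivalence classes, and that each is well-defined. First I would verify that the map $A \mapsto I - tA$ induces a well-defined map from SSE-$\rr$ classes to $\el(\rr[t])$ equivalence classes of matrices $I-B$ with $B$ over $t\rr[t]$. By the Maller--Shub characterization recalled in Section \ref{backgroundsec}, SSE-$\rr$ is generated by similarity $A = U^{-1}BU$ and zero extensions $\left(\begin{smallmatrix} A & U \\ 0 & 0\end{smallmatrix}\right) \sim A \sim \left(\begin{smallmatrix} A & 0 \\ U & 0\end{smallmatrix}\right)$; so it suffices to check each of these two moves is respected by $A \mapsto I-tA$ up to $\el(\rr[t])$ equivalence. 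For similarity this is immediate (conjugate by the constant matrix $U$, which lies in $\gl(\rr) \subset \gl(\rr[t])$, and reduce to elementary equivalence as in the argument used in the proof of Theorem \ref{aplusn}). For zero extensions, $I - t\left(\begin{smallmatrix} A & U \\ 0 & 0\end{smallmatrix}\right) = \left(\begin{smallmatrix} I-tA & -tU \\ 0 & I\end{smallmatrix}\right)$, and clearing the off-diagonal block $-tU$ by an elementary column operation (legitimate over $\rr[t]$) gives $\left(\begin{smallmatrix} I-tA & 0 \\ 0 & I\end{smallmatrix}\right)$, which by the stabilization conventions of Notation \ref{abuse} is $\el(\rr[t])$ equivalent to $I - tA$; the other zero extension is symmetric.

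Next I would check that the map $I - A \mapsto \As$ induces a well-defined map from $\el(\rr[t])$ equivalence classes (of $I-A$ with $A$ over $t\rr[t]$) to SSE-$\rr$ classes. The remark after Definition \ref{asharpdefinition} already records that the choices of $k$ and $n$ do not change the SSE-$\rr$ class of $\As$, so the content is that an elementary equivalence $U(I-A)V = I-B$ over $\rr[t]$ produces an SSE-$\rr$ between $\As$ and $\Bs$. The key computation is that $\As$ and $I-tA$ are related by a shift equivalence over $\rr[t]$ in the companion-matrix sense: one checks directly from the block form that $\cok(I - t\As)$ and $\cok(I - tA)$ are isomorphic $\rr[t]$-modules (this is the standard ``companion linearization'' of the polynomial matrix $I - A = I - \sum t^i A_i$). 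Combined with part (2)$\iff$(3) of Theorem \ref{selisttheorem}, this lets me translate elementary equivalence of $I-A$'s into statements about the matrices $\As$. To get genuine SSE-$\rr$ (not merely SE-$\rr$) I would track the elementary operations in $U$ and $V$ through the block structure: a basic elementary matrix acting on $I-A$ over $\rr[t]$ corresponds, on $\As$, to a composition of conjugations and zero extensions over $\rr$, because multiplying by $t$ is realized inside $\As$ by the downward shift $I$-blocks. This is essentially the "positive equivalence $=$ SSE" mechanism of \cite{BW04} adapted to the ring setting without positivity.

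Finally I would verify the two composites are the identity on the respective sets of classes. Starting from $A$ over $\rr$: the matrix $I - tA$ has $\As[(I-tA)] = \left(\begin{smallmatrix} A \end{smallmatrix}\right) = A$ when we take $k=1$, so one composite is literally the identity. Starting from $I - A$ with $A$ over $t\rr[t]$: we must show $\As$, viewed back through $B \mapsto I - tB$, returns to the $\el(\rr[t])$ class of $I-A$, i.e. $I - t\As \sim I - A$ over $\rr[t]$; this again follows from the companion linearization, since $I - t\As$ and $I-A$ have isomorphic cokernels over $\rr[t]$ and both are injective, so Theorem \ref{fittingtheorem} (as packaged in Theorem \ref{selisttheorem}(3)$\implies$(4)) gives a $\gl(\rr[t])$ equivalence, and the determinant bookkeeping (both matrices are $I$ plus something in $t\rr[t]$, hence trivial in $K_1$) upgrades it to an $\el(\rr[t])$ equivalence. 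The main obstacle I expect is the middle step: showing that an arbitrary elementary operation over $\rr[t]$ on $I-A$ descends to a chain of SSE-$\rr$ moves on $\As$ rather than just an SE-$\rr$ relation. This is where one must be careful and somewhat computational, peeling off one elementary factor at a time and exhibiting the corresponding conjugation-and-zero-extension chain on the companion form; the comment in the introduction that a $K$-theorist might find this direction "barbaric" is precisely about this bookkeeping.
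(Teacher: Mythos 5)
Your overall scaffolding (check both maps are well-defined, check the two composites are the identity) is the right plan, and you correctly identify the companion linearization $I-t\As \sim I-A$ as the structural fact needed; but the proposal has two genuine gaps, one of which is a conceptual error that would trivialize the paper if it were correct.

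The fatal step is the claimed upgrade of the Fitting $\GL(\rr[t])$ equivalence between $I-t\As$ and $I-A$ to an $\EL(\rr[t])$ equivalence by ``determinant bookkeeping (both matrices are $I$ plus something in $t\rr[t]$, hence trivial in $K_1$).'' This reasoning is false. A matrix of the form $I+tM$ over $\rr[t]$ is \emph{not} automatically trivial in $K_1(\rr[t])$: the failure of exactly this implication is $NK_1(\rr)$, the group the entire paper is about. (Indeed $I-tN$ with $N$ nilpotent is invertible, equal to $I$ at $t=0$, and can be a nontrivial class in $K_1(\rr[t])$.) If such bookkeeping sufficed, then \eqref{seisgl} would already imply \eqref{sseisel} and the theorem would be immediate; the distinction between $\GL(\rr[t])$ and $\EL(\rr[t])$ equivalence for these particular matrices is precisely what Theorems \ref{fredholmk1injective} and \ref{trivialkseo} are needed to control, and those rest on the Neeman--Ranicki localization sequence. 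For the specific fact you actually need here -- that $I-t\As \sim I-A$ -- Fitting should not be invoked at all: the companion linearization is an explicit chain of elementary row and column operations (e.g.\ for $k=2$, left-multiply by $\left(\begin{smallmatrix} I & tA_2 \\ 0 & I\end{smallmatrix}\right)$ and right-multiply by $\left(\begin{smallmatrix} I & 0 \\ tI & I\end{smallmatrix}\right)$ to pass from $I-t\As$ to $(I-A)\oplus I$), so the equivalence is elementary on the nose, with no $K_1$ issue to resolve.

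The second gap is that you only sketch the central implication, namely that an $\EL(\rr[t])$ equivalence $U(I-A)V=I-B$ produces an SSE-$\rr$ between $\As$ and $\Bs$. You correctly flag this as the hard, careful-bookkeeping step, but a sketch is not a proof, and this step is the core of the theorem: the paper's proof peels off one basic elementary factor at a time, distinguishes factors with offdiagonal entry in $\rr$ (handled by conjugation) from factors with offdiagonal entry $rt^\ell$ with $\ell>0$ (handled by Lemma \ref{centrallemma}), and Lemma \ref{centrallemma} itself is a long explicit chain of row/column splittings, diagonal refactorizations, and amalgamations in the block-companion form. None of that is present in your proposal. Your first-direction argument via the Maller--Shub generators (similarity and zero extensions) is a legitimate alternative to the paper's direct $UV/VU$ computation and works fine; but the theorem's difficulty is concentrated entirely in the converse direction, and that is what remains to be supplied.
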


\begin{proof} We will first show that
when $A$ and $B$ are SSE over $\rr$, it follows that the matrices
$I-tA$ and $I-tB$ are $\text{El}(\rr [t] )$ equivalent.
It suffices to do this for  an elementary strong shift equivalence.
Suppose $U,V$ are matrices over $\rr$ such that
$A=UV$ and $B=VU$. Then
(as pointed out by  Maller and Shub
\cite{MallerShub1985}),
\[
\begin{pmatrix} I & 0 \\ V & I
\end{pmatrix}
\begin{pmatrix} A & U \\ 0 & 0
\end{pmatrix}
\ =\
\begin{pmatrix} 0 & U \\ 0 & B
\end{pmatrix}
\begin{pmatrix} I & 0 \\ V & I
\end{pmatrix}
\]
and therefore
\[
\begin{pmatrix} I & 0 \\ V & I
\end{pmatrix}
\begin{pmatrix} I-tA & -tU \\ 0 & I
\end{pmatrix}
\ =\
\begin{pmatrix} I & -tU \\ 0 & I-tB
\end{pmatrix}
\begin{pmatrix} I & 0 \\ V & I
\end{pmatrix} \ .
\]
Also,
\begin{align*}
\begin{pmatrix} I-tA & -tU \\ 0 & I
\end{pmatrix}
& \ =\
\begin{pmatrix} I & -tU \\ 0 & I
\end{pmatrix}
\begin{pmatrix} I-tA & 0\\ 0 & I
\end{pmatrix}
 \quad \text{ and } \quad \\
\begin{pmatrix} I & -tU \\ 0 & I-tB
\end{pmatrix}
&\ = \
\begin{pmatrix} I & 0 \\ 0 & I-tB
\end{pmatrix}
\begin{pmatrix} I & -tU \\ 0 & I
\end{pmatrix} \ .
\end{align*}
Therefore $I-tA$ and $I-tB$ are
$\textnormal{El}(\rr [t])$ equivalent.

Now suppose that $A$ and $B$ are matrices over $t\rr [t]$ such that
$I-A$ and $I-B$ are $\textnormal{El}(\rr [t])$ equivalent.
We will show that
$ \mathcal A^{\Box}$
and $ \mathcal B^{\Box}$
are SSE
over $\rr$.

There are basic elementary matrices $E_1, \dots , E_j$ and
$F_1,\dots ,F_k$ ,
in each of which the single  nonzero offdiagonal term
has the form $rt^{\ell}$, with $r\in \rr$ and $\ell \geq 0$,
such that
\[
E_j\cdots E_2E_1 (I-A)\ = \ (I-B)F_1F_2\cdots F_k \ .
\]
 Choose the block size $n$ for $A^{\Box}$ and
$B^{\Box}$ large enough that each
 $E_i$ and $F_j$ equals $I$
outside the principal submatrix on indices
$\{1,\dots , n\} \times \{1,\dots , n\}$.
Let $G_i$ denote the image of $E_i$ in $\textnormal{El}(\rr )$
under the map induced by $t\mapsto 0$.
Recursively,
for $0<i \leq j$,
  given $A_{i-1}$ we
will define
 $A_{i}$ over $\rr [t]$ such that
  $(A_{i})^{\Box}$ is
SSE over $\rr$ to $(A_{i-1})^{\Box}$ and also
\begin{align} \label{same}
E_j\cdots E_{i+1} (I-A_{i})\ &= \ (I-B)(F_1F_2\cdots F_k) (G_1)^{-1}\cdots (G_i)^{-1} \quad
\textnormal{if } i<j \\ \notag
 (I-A_{i})\ &= \ (I-B)(F_1F_2\cdots F_k) (G_1)^{-1}\cdots (G_i)^{-1} \quad
\textnormal{if } i=j  \ .
\end{align}
There are two cases.

Case 1: The offdiagonal entry of $E_i$ has the form
$rt^{\ell}$ with $\ell > 0$. In this case,
define $A_i$ by the equation $I-A_i=E_i(I-A_{i-1})$.
By Lemma \ref{centrallemma},  $(A_i)^{\Box}$ is
SSE over $\rr$ to $\As$. Equation (\ref{same}) holds because $G_i=I$.

Case 2:
$E_i$ has all  entries in $\rr$. Then
define $A_i $ over  $t\rr [t]$ by the equation
$I-A_i=E_i (I-A_{i-1})(E_i)^{-1}$.
Equation (\ref{same}) holds because $G_i=E_i$,
so for this case it remains
to check the strong shift equivalence.
Let $E_i$ also denote the restriction of
$E_i$ to the  finite
principal submatrix on indices
$\{1, \dots , n\}
\times
\{1, \dots , n\}$, define
$D$ to be the block diagonal matrix
with $k$ diagonal blocks, each equal to
$E_i$.  Then
$A_i^{\Box}=D^{-1}
A_{i-1}^{\Box} D$, and therefore
$A_i^{\Box}$ is SSE over $\rr$ to
$A_{i-1}$.

Define $G=G_j\cdots G_2G_1\in \text{El}(\rr )$.
From the preceding we have
$\As$ SSE over
$\rr$ to $(A_j)^{\Box}$,
 with
$
I-A_j = (I-B) (F_1F_2\cdots F_k )G^{-1}
$
and therefore
\[
(I-A_j)G = (I-B) (F_1F_2\cdots F_k ) \ .
\]
Let $H_i$ denote the evaluation
of $F_i$ at $t=0$. Repeating the previous
procedure, with the role of left and right
interchanged, we find $B_k$ with $(B_k)^{\Box} $
and $B^{\Box} $ SSE over $\rr$, and
\[
(H_k)^{-1}\cdots (H_2)^{-1}(H_1)^{-1}(I-A_j)G =
(I-B_k) \ .
\]
Define $H=
H_1H_2\cdots H_k$.  Then
$H^{-1}(I-A_j)G = I-B_k$. Evaluating at $t=0$,
we see $H=G$. Then $B_k= G^{-1}A_jG$; as
 in Case 2,
$(A_j)^{\Box }$ is SSE over $\rr$ to $(B_k)^{\Box}$.
This finishes the proof (given Lemma \ref{centrallemma}).

\end{proof}

\begin{lemma}\label{centrallemma}
Let $\rr$ be a ring.
Suppose $A$ and $B$ are matrices over $t\calR[t]$;
$\ell$ is a positive integer;
$E$ is a basic elementary matrix
whose nonzero offdiagonal entry is $E(i_0,j_0) = rt^{\ell} $,
with $r\in \rr$;
and $E(I-A) = I-B$ or  $(I-A)E = I-B$.

Then the matrices $\As$ and $\Bs$ are SSE over $\rr$.
\end{lemma}

\begin{proof}
Without loss of generality, suppose
for notational simplicity that $(i_0,j_0) = (1,2)$.

We first give a proof assuming that
$E(I-A) = I-B$.
Let $A= tA_1 + \cdots + t^k A_k$,
with the $A_i$ over $\rr$,  and
for later notational convenience set
$A_i =0$ if $i>k$.
Since
$E(A-I) = B-I$, we have
$B= EA -E+I = EA -(E-I)I
$.
Therefore $B = tB_1 + \dots + t^{k+l}B_{k+\ell}$,  with
$
B_{\ell}(1,2)\  =\ A_{\ell}(1,2) -r $, and
%
%\[
%B_{i+\ell }(1,j)\  =\ A_{i+\ell }(1,j)+ rA_i{\ell}(2,j)  \ , \quad
%1\leq i \leq k\ ,
%\]
\[
B_{i+\ell }(1,j)\  =\ A_{i+\ell }(1,j)+ rA_i(2,j)  \ , \quad
1\leq i \leq k\ ,
\]
and in all other entries $B=A$.

We  first consider the case $\ell =1$.  Let $X$ be the $n\times n $ matrix
such that $X(1,2)=1$ and other entries of $X$ are zero.
Let $u_i $ be the row vector which is the second row of $A_i$. Let
$U_i$ be the $n\times n$ matrix whose first row is $u_i$ and whose
other rows are zero.
Then the matrix $\Bs$, in block form with
$n\times n$ blocks, is
\[
\Bs =
\begin{pmatrix}
A_1-rX&A_2+rU_1 &A_3+rU_2 & \dots &A_{k-1}+rU_{k-2}& A_k+rU_{k-1}& rU_k    \\
I     & 0       &0        & \dots  & 0            & 0           & 0      \\
0     & I       &0        & \dots  & 0            & 0           & 0      \\
0     & 0       & I       & \dots  & 0            & 0           & 0     \\
\dots &\dots    &\dots    &\dots   &\dots         &\dots        &\dots    \\
0     & 0       & 0       & \dots  & I            & 0           & 0        \\
0     & 0       & 0       & \dots  & 0            & I           & 0      \\
\end{pmatrix} \ .
\]
We we will perform a string of elementary SSEs over $\rr$ which will
transform $\Bs$ into $\As$. We use lines within matrices to emphasize
block patterns, especially for blocking compatible with a multiplication.

First we perform the column splitting which splits off columns which
isolate all entries with coefficient $r$.
Letting  $e_1$ denote the size $n$ column vector
with first entry 1 and other entries zero,
we define the
$n(k+1)\times n(2k+1)+1$ matrix
\[
W=
\left(
\begin{array}{cccccc|ccc|c}
A_1&A_2 & \dots &A_{k-1}& A_k& 0  &rU_1& \cdots &rU_k & -re_1  \\
I  & 0  & \dots  & 0    & 0  & 0  & 0  & \cdots & 0   &0         \\
0  & I  & \dots  & 0    & 0  & 0  & 0  & \cdots & 0   &0         \\
0  & 0  & \dots  & 0    & 0  & 0  & 0  & \cdots & 0   &0        \\
\dots &\dots   &\dots &\dots &\dots  &\dots &\dots &\dots &\dots &\dots     \\
0  & 0  & \dots  & I    & 0  & 0  & 0  & \cdots & 0   &0       \\
0  & 0  & \dots  & 0    & I  & 0  & 0  & \cdots & 0   &0     \\
\end{array}
\right)\ .
\]
and the
$(n(2k+1)+1)\times n(k+1)$ matrix
\[
M=
\begin{pmatrix}
I_n  & 0 \\
0  & I_{nk} \\ \hline
0  & I_{nk} \\ \hline
e_2& 0
\end{pmatrix}
\]
in which $I_j$ as usual  means a $j\times j$ identity matrix and
$e_2$ is the row vector $ \left(
\begin{smallmatrix} 0&1&0&\cdots& 0
\end{smallmatrix}\right)$. Then $\Bs = WM$ and we define
$B^{(1)}= MW$,
 SSE over $\rr$ to $\Bs$. In block form,
\[
B^{(1)}=
\left(
\begin{array}{cccccc|ccc|c}
A_1&A_2 & \dots &A_{k-1}& A_k& 0  &rU_1& \cdots &rU_k & -re_1  \\
I  & 0  & \dots  & 0    & 0  & 0  & 0  & \cdots & 0   &0         \\
0  & I  & \dots  & 0    & 0  & 0  & 0  & \cdots & 0   &0         \\
0  & 0  & \dots  & 0    & 0  & 0  & 0  & \cdots & 0   &0        \\
\dots &\dots   &\dots &\dots &\dots  &\dots &\dots &\dots &\dots &\dots     \\
0  & 0 & \dots  & I    & 0  & 0  & 0  & \cdots & 0   &0       \\
0  & 0 & \dots  & 0    & I  & 0  & 0  & \cdots & 0   &0     \\ \hline
I  & 0 & \dots  & 0    & 0  & 0  & 0  & \cdots & 0   &0         \\
0  & I & \dots  & 0    & 0  & 0  & 0  & \cdots & 0   &0         \\
0  & 0 & \dots  & 0    & 0  & 0  & 0  & \cdots & 0   &0        \\
\dots  &\dots   &\dots &\dots &\dots  &\dots &\dots &\dots &\dots &\dots     \\
0  & 0 & \dots  & I    & 0  & 0  & 0  & \cdots & 0   &0       \\
0  & 0 & \dots  & 0    & I  & 0  & 0  & \cdots & 0   &0     \\ \hline
u_1&u_2& \dots  &u_{k-1}&u_k& 0  & 0  & \cdots & 0   &0
\end{array}
\right)
 \ .
\]
Next we perform a diagonal refactorization of $B^{(1)}$.
Define the diagonal matrix $D$ by setting
\begin{align*}
D(t,t) &\ =\ 1 \quad \ \ \text{if } 1\leq t \leq (k+1)n \\
D\big((k+i)n+t,\, (k+i)n+t)\big)\  & =\ u_i(t)\ \text{ if }
1\leq i \leq k \ \text{ and }\ 1\leq t \leq n \\
       &\ =\ 1 \quad \ \ \text{if } t=(2k+1)n +1 \ .
\end{align*}
Define a matrix $X$ which is equal to $B^{(1)}$ except that
$X(1,t) = r$ if $(k+1)n + 1 \leq t \leq (2k+1)n$. Then
$B^{(1)}=XD$. Define $B^{(2)}=DX$. In block form,
\[
B^{(2)}=
\left(
\begin{array} {cccccc|ccc|c}
A_1&A_2 & \dots &A_{k-1}& A_k& 0& R  & \cdots & R & -re_1  \\
I  & 0  & \dots  & 0    & 0  & 0& 0  & \cdots & 0   &0         \\
0  & I  & \dots  & 0    & 0  & 0& 0  & \cdots & 0   &0         \\
0  & 0  & \dots  & 0    & 0  & 0& 0  & \cdots & 0   &0        \\
\dots   &\dots   &\dots &\dots  &\dots &\dots &\dots&\dots &\dots &\dots     \\
0  & 0  & \dots  & I    & 0  & 0& 0  & \cdots & 0   &0       \\
0  & 0  & \dots  & 0    & I  & 0& 0  & \cdots & 0   &0     \\ \hline
U'_1   &0   & \dots  & 0    & 0 & 0  & 0  & \cdots & 0   &0         \\
0  & U'_2 & \dots  & 0    & 0   & 0  & 0  & \cdots & 0   &0         \\
0  & 0    & \dots  & 0    & 0   & 0  & 0  & \cdots & 0   &0        \\
\dots     &\dots   &\dots &\dots&\dots  &\dots &\dots &\dots &\dots &\dots     \\
0  & 0    & \dots  &U'_{k-1}& 0 & 0  & 0  & \cdots & 0   &0       \\
0  & 0    & \dots  & 0    & U'_k& 0  & 0  & \cdots & 0   &0     \\ \hline
u_1&u_2   & \dots  &u_{k-1}&u_k & 0  & 0  &\cdots  & 0   &0
\end{array}
\right)
\]
in which every entry of the top row of $R$ is $r$ and the other entries
of $R$ are zero, and $U'_i$ denotes the diagonal matrix with
$U'_i(t,t) =u_i(t)$, for $1\leq t \leq n$.

Next, amalgamate the columns $(k+1)n+1, \dots , (2k+1)n$ (the columns
through the $R$ blocks) to a single column to form $B^{(3)}$.
For this define
\begin{align*}
Y&=
\left(
\begin{array}{ccccccc|c|c}
A_1&A_2&A_3 & \cdots  &A_{k-1}& A_k& 0 & re_1  & -re_1  \\
I  & 0 &0   & \cdots  & 0    & 0  & 0  & 0     &0         \\
0  & I &0   & \cdots  & 0    & 0  & 0  & 0     &0         \\
0  & 0 &I   & \cdots  & 0    & 0  & 0  & 0     &0   \\
\cdots  &\cdots&\cdots  &\cdots &\cdots &\cdots &\cdots &\cdots &\cdots     \\
0  & 0 & 0  & \cdots  & I    & 0  & 0 & 0 &0       \\
0  & 0 & 0  & \cdots  & 0    & I  & 0 & 0 &0     \\ \hline
U'_1& 0&0   & \cdots  & 0    & 0  & 0 & 0 &0         \\
0  & U'_2 &0& \cdots  & 0    & 0  & 0 & 0 &0         \\
0  & 0 &U'_3& \cdots  & 0    & 0  & 0 & 0 &0        \\
\cdots &\cdots&\cdots   &\cdots &\cdots&0 &\cdots &\cdots &\cdots     \\
0  & 0 & 0  & \cdots  &U'_{k-1}&0 & 0 & 0 &0       \\
0  & 0 & 0  & \cdots  & 0    & U'_k&0 & 0 &0     \\ \hline
u_1&u_2&u_3 & \cdots  &u_{k-1}&u_k& 0 & 0 & 0
\end{array}
\right)\quad \text{ and }\\
Z&=
\left(
\begin{array}{c|c|c}
I_{(k+1)n} & 0 & 0 \\ \hline
0 & 1 \cdots 1 & 0 \\ \hline
0 & 0 & 1
\end{array}
\right)
\end{align*}
in which the central block of $Z$ is a row vector of size
$kn$ with every entry 1. Then
$B^{(2)}= YZ$ and we define $B^{(3)}= ZY$. In block form,
\[
B^{(3)}=
\begin{pmatrix}
A_1&A_2&A_3 & \cdots &A_{k-1}& A_k    & 0      & re_1  & -re_1  \\
I  & 0 &0   & \cdots  & 0    & 0      & 0      & 0     &0         \\
0  & I &0   & \cdots  & 0    & 0      & 0      & 0     &0         \\
0  & 0 &I   & \cdots  & 0    & 0      & 0      & 0     & 0   \\
\cdots &\cdots&\cdots   &\cdots &\cdots   &\cdots   &\cdots  &\cdots &\cdots     \\
0  & 0 & 0  & \cdots  & I    & 0      & 0      & 0     &0       \\
0  & 0 & 0  & \cdots  & 0    & I      & 0      & 0     &0     \\
u_1&u_2&u_3 & \cdots  &u_{k-1}&u_k    & 0      & 0     &0    \\
u_1&u_2&u_3 & \cdots  &u_{k-1}&u_k    & 0      & 0     &0
\end{pmatrix}  \ .
\]
Next we similarly amalgamate the last two rows, to obtain the
matrix
\[
B^{(4)}=
\left(
\begin{array}{cccccc|c|c}
A_1&A_2&A_3 & \cdots &A_{k-1}& A_k    & 0      & 0       \\
I  & 0 &0   & \cdots  & 0    & 0      & 0      & 0              \\
0  & I &0   & \cdots  & 0    & 0      & 0      & 0              \\
0  & 0 &I   & \cdots  & 0    & 0      & 0      & 0        \\
I  & 0 &0   & \cdots  & 0    & 0      & 0      & 0              \\
0  & I &0   & \cdots  & 0    & 0      & 0      & 0              \\
0  & 0 &I   & \cdots  & 0    & 0      & 0      & 0             \\
\cdots &\cdots&\cdots   &\cdots &\cdots   &\cdots   &\cdots  &\cdots     \\
0  & 0 & 0  & \cdots  & I    & 0      & 0      & 0            \\ \hline
0  & 0 & 0  & \cdots  & 0    & I      & 0      & 0          \\ \hline
u_1&u_2&u_3 & \cdots  &u_{k-1}&u_k    & 0      & 0
\end{array}
\right)
\ .
\]
This matrix is a zero extension of $\As$ and therefore is
SSE over $\rr$ to $\As$  (see Proposition 6.5). This finishes the
proof in the case
$\ell=1$ that the matrices $\As$ and $\Bs$ are SSE over $\rr$.

The proof for the case $\ell > 1$ is very similar. We will discuss it for the case
$\ell =3$, from which the general argument should be clear.
For $\ell =3$, with the same notation as in the case $\ell =1$,
and recalling $A_i=0$ if $i>k$, we have
\[
\Bs =
\begin{pmatrix}
A_1 &A_2 &A_3-rX&A_4+rU_1 & \cdots &A_{k+1}+rU_{k-2}&A_{k+2}+rU_{k-1} &A_{k+3} +rU_k    \\
I   & 0  &0     &0        & \cdots & 0            & 0           & 0      \\
0   & I  &0     &0        & \cdots & 0            & 0           & 0      \\
0   & 0  &I     &0        & \cdots & 0            & 0           & 0     \\
0   & 0  &0     &I        & \cdots & 0            & 0           & 0     \\
\cdots &\cdots &\cdots&\cdots &\cdots  &\cdots    &\cdots       &\cdots    \\
0   & 0  & 0    &0        & \cdots & I            & 0           & 0        \\
0   & 0  & 0    &0        & \cdots & 0            & I           & 0
\end{pmatrix} \ .
\]
As in the case $\ell =1$, we split columns to isolate the terms involving $r$.
The resulting matrix $B^{(1)}$ here has a  form involving
a shift of the $\ell=1$ form in the new rows:
\[
B^{(1)}=
\left(
\begin{array}{ccccccc|cc|c}
A_1&A_2&A_3 &A_4\ \  \cdots   &A_{k+1}& A_{k+2}& 0 &rU_1 \ \ \cdots   &rU_k & -re_1  \\
I  & 0 &0   &0 \ \ \    \cdots \ \   & 0    & 0     & 0  & 0   \ \ \ \cdots\   & 0   &0         \\
0  & I &0   &0 \ \  \  \cdots \ \   & 0    & 0     & 0  & 0   \ \ \  \cdots  \  & 0   &0         \\
0  & 0 &I   &0 \ \ \  \cdots \ \   & 0    & 0     & 0  & 0   \ \  \ \cdots  \  & 0   &0        \\
\cdots &\cdots&\cdots &  \cdots \ \ \  \cdots &\cdots &\cdots  &\cdots & \ \cdots \ \ \cdots\ \ &\cdots &\cdots     \\
0  & 0 & 0  &0  \ \ \   \cdots \ \  & I    & 0     & 0  & 0   \ \ \cdots \   & 0   &0       \\
0  & 0 & 0  &0  \ \ \   \cdots \ \  & 0    & I     & 0  & 0  \  \ \ \cdots \   & 0   &0     \\ \hline
0  & 0 &I   &0  \ \ \   \cdots \ \  & 0    & 0     & 0  & 0    \ \ \ \cdots \   & 0   &0         \\
0  & 0 &0   &I  \ \ \   \cdots \ \  & 0    & 0     & 0  & 0   \ \ \  \cdots \   & 0   &0         \\
0  & 0 &0   &0  \ \ \   \cdots \ \  & 0    & 0     & 0  & 0   \ \  \ \cdots \   & 0   &0        \\
\cdots  &\cdots&  \cdots &   \cdots \ \ \ \cdots &\cdots&\cdots&\cdots &\cdots \  \cdots \ \ &\cdots &\cdots     \\
0  & 0 & 0  &0  \ \   \cdots \ \  & I    & 0     & 0  & 0   \ \  \ \cdots \   & 0   &0       \\
0  & 0 & 0  &0  \ \   \cdots \ \  & 0    & I     & 0  & 0   \ \  \ \cdots \   & 0   &0     \\ \hline
0  & 0 &u_1 &u_2\ \  \cdots \ \   & u_{k-1}&u_k    & 0  & 0  \ \  \ \cdots \   & 0   &0
\end{array}
\right)  \ .
\]
From here the argument proceeds  as in the case $\ell =1$,
through slightly different matrices,
\[
B^{(2)}=
\left(
\begin{array}{ccccccc|cc|c}
A_1&A_2&A_3 &A_4\ \  \cdots   &A_{k+1}& A_{k+2}& 0 &R \ \ \cdots   &R & -re_1  \\
I  & 0 &0   &0 \ \ \    \cdots \ \   & 0    & 0     & 0  & 0   \ \ \ \cdots\   & 0   &0         \\
0  & I &0   &0 \ \  \  \cdots \ \   & 0    & 0     & 0  & 0   \ \ \  \cdots  \  & 0   &0         \\
0  & 0 &I   &0 \ \ \  \cdots \ \   & 0    & 0     & 0  & 0   \ \  \ \cdots  \  & 0   &0        \\
\cdots &\cdots&\cdots &  \cdots \ \ \  \cdots &\cdots &\cdots  &\cdots & \ \cdots \ \ \cdots\ \ &\cdots &\cdots     \\
0  & 0 & 0  &0  \ \ \   \cdots \ \  & I    & 0     & 0  & 0   \ \ \cdots \   & 0   &0       \\
0  & 0 & 0  &0  \ \ \   \cdots \ \  & 0    & I     & 0  & 0  \  \ \ \cdots \   & 0   &0     \\
\hline
0  & 0 &U'_1 &0  \ \ \   \cdots \ \  & 0    & 0     & 0  & 0    \ \ \ \cdots \   & 0   &0         \\
0  & 0 &0   &U'_2  \ \ \   \cdots \ \  & 0    & 0     & 0  & 0   \ \ \  \cdots \   & 0   &0         \\
0  & 0 &0   &0  \ \ \   \cdots \ \  & 0    & 0     & 0  & 0   \ \  \ \cdots \   & 0   &0        \\
\cdots  &\cdots&  \cdots &   \cdots \ \ \ \cdots &\cdots&\cdots&\cdots &\cdots \  \cdots \ \ &\cdots &\cdots     \\
0  & 0 & 0  &0  \ \   \cdots \ \  & U'_{k-1}    & 0     & 0  & 0   \ \  \ \cdots \   & 0   &0       \\
0  & 0 & 0  &0  \ \   \cdots \ \  & 0    & U'_k     & 0  & 0   \ \  \ \cdots \   & 0   &0     \\
\hline
0  & 0 &u_1 &u_2\ \  \cdots \ \   & u_{k-1}&u_k    & 0  & 0  \ \  \ \cdots \   & 0   &0
\end{array}
\right)
\]
and
\[
B^{(3)}=
\left(
\begin{array}{ccccccccc}
A_1&A_2&A_3 &A_4\ \  \cdots   &A_{k+1}& A_{k+2}& 0   &re_1 & -re_1  \\
I  & 0 &0   &0 \ \ \ \cdots \ \ & 0   & 0     & 0    & 0   &0         \\
0  & I &0   &0 \ \  \  \cdots \ \ & 0 & 0     & 0    & 0   &0         \\
0  & 0 &I   &0 \ \ \  \cdots \ \   & 0 & 0    & 0    & 0   &0        \\
\cdots &\cdots&\cdots &  \cdots \ \ \  \cdots &\cdots &\cdots  &\cdots & \cdots &\cdots     \\
0  & 0 & 0  &0  \ \ \   \cdots \ \  & I & 0   & 0    & 0   &0       \\
0  & 0 & 0  &0  \ \ \   \cdots \ \  & 0 & I   & 0    & 0   &0     \\
0  & 0 &u_1 &u_2\ \  \cdots \ \   & u_{k-1}&u_k    & 0    & 0   &0 \\
0  & 0 &u_1 &u_2\ \  \cdots \ \   & u_{k-1}&u_k    & 0    & 0   &0
\end{array}
\right)  \ .
\]
 This completes our proof that
$\As$ and $\Bs$ are SSE over $\rr$ in the case $E(I-A)=I-B$.

Now suppose $(I-A)E=I-B$. In place of $\As$, we consider a
matrix form corresponding to a role reversal for rows
and columns:
\[
 A^{\text{col}} \ = \
\begin{pmatrix}
A_1     & I   &0      &\cdots   &0      &0       & 0 \\
A_2     & 0   &I      &\cdots   & 0     & 0      & 0  \\
A_3     & 0   &0      &\cdots   & 0     & 0      & 0  \\
\cdots &\cdots& \cdots &\cdots  &\cdots &\cdots  &\cdots  \\
A_{k-2} & 0   &0      & \cdots  & 0     & I      & 0  \\
A_{k-1} & 0   & 0     & \cdots  &0      & 0      & I  \\
A_k     & 0   & 0     & \cdots  &0      & 0      & 0
\end{pmatrix} \ .
\]
With the roles of row and column reversed, the arguments
we've given show that $ A^{\text{col}}$ and $ B^{\text{col}}$
are SSE over $\rr$. What remains is to see that
$ A^{\text{col}} $ and $\As$ are SSE over $\rr$.
For this we define a matrix $A'$ with the
block form

\[
A'=
\left(
\begin{array}{cc|c|c|c|c|c}
A_1 & A_2 &A_3\ \ 0&A_4\  0& \cdots       &A_{k-1}\ \ 0&A_k\ \ 0 \\
I   & 0   &0\ \ \ 0  & 0 \ \ 0& \cdots   & 0\ \ \ 0 & 0\ \ 0    \\ \hline
0   & 0  &\ 0\ \ \ I_{n}& 0 \ \ 0& \cdots & 0\ \ \ 0 & 0\ \ 0    \\
I   & 0   &0\ \ \  0  & 0 \ \ 0& \cdots    & 0\ \ \ 0 & 0\ \ 0 \\  \hline
0   & 0  &\, 0\ \ \ 0& \ 0 \ \ I_{2n}& \cdots& 0\ \ \ 0 & 0\ \ 0    \\
I   & 0   &0\ \ \  0  & 0 \ \ 0& \cdots    & 0\ \ \ 0 & 0\ \ 0    \\  \hline
\cdots    &\cdots  &\cdots  &\cdots        &\cdots  &\cdots &\cdots\\ \hline
0   & 0   &0\ \ \ 0&0\ \ \ 0  & \cdots &\ \ \ \  \ 0\ \ I_{(k-3)n}   & 0\ \ 0      \\
I   & 0   &0\ \ \ 0&0\ \ \ 0  & \cdots &  0\ \ \ \ 0                 & 0\ \ 0 \\  \hline
0   & 0   &0\ \ \ 0&0\ \ \ 0  & \cdots &  0\ \ \ \ 0                 & \ \ \ \ \ \ 0\ \ I_{(k-2)n} \\
I   & 0   &0\ \ \ 0&0\ \ \ 0  & \cdots &  0\ \ \ \ 0                 & 0\ \ 0
\end{array}
\right) \ .
\]
In the display of
$A'$ above and next,  a block $I$ without subscript  is $I_n$.
%$\left(\begin{smallmatrix}A_j &0\end{smallmatrix}\right)$
%in which $0$ is $n\times (j-2)n$. $I'_j$ denotes the

%$\left(\begin{smallmatrix} 0\\I_n\end{smallmatrix}\right)$
For example, if $k=4 $ then
\[
A'=
\left(
\begin{array}{cc|cc|ccc}
A_1 & A_2 &A_3 & 0      & A_4 & 0      & 0 \\
I   & 0   &0   & 0      & 0   & 0      & 0  \\ \hline
0   & 0   &0   & I      & 0   & 0      & 0  \\
I   & 0   &0   & 0      & 0   & 0      & 0  \\  \hline
0   & 0   & 0  &0       & 0   & I      & 0  \\
0   & 0   & 0  &0       & 0   & 0      & I  \\
I   & 0   & 0  &0       & 0   & 0      & 0
\end{array}
\right) \ .
\]
Three steps remain.

First, the matrix $A'$ is SSE over $\rr$ to $\As$ by a string
of $k-2$ block row amalgamations. Beginning with $A'=A'_0$: amalgamate
to block row 2 the
block rows with $I$ in block column 1 to form $A'_1$.
From the resulting matrix,
amalgamate to block row 3 the rows  with $I$ in column 2,
to form $A'_2$.  Etc.
The last block row amalgamation produces
$\As$. For example, with $A'$ above for $k=4$ and
\[
X=
\left(
\begin{array}{ccccccc}
I   & 0   &0   & 0      & 0   \\
0   & I   &0   & 0      & 0    \\
0   & 0   &I   & 0      & 0    \\
0   & I   &0   & 0      & 0    \\
0   & 0   & 0  & I      & 0    \\
0   & 0   & 0  &0       & I    \\
0   & I   & 0  &0       & 0
\end{array}
\right) \ , \quad
Y=
\left(
\begin{array}{ccccccc}
A_1 & A_2 &A_3 & 0      & A_4 & 0      & 0 \\
I   & 0   &0   & 0      & 0   & 0      & 0  \\
0   & 0   &0   & I      & 0   & 0      & 0  \\
0   & 0   & 0  &0       & 0   & I      & 0  \\
0   & 0   & 0  &0       & 0   & 0      & I
\end{array}
\right)
\]
we have $A'=A'_0= XY$ and
\[
A'_1= YX =
\left(
\begin{array}{ccccc}
A_1 & A_2 &A_3      & A_4 & 0     \\
I   & 0   &0        & 0   & 0      \\
0   & I   &0        & 0   & 0      \\
0   & 0   & 0       & 0   & I      \\
0   & I   & 0       & 0   & 0
\end{array} \ .
\right)
\]
The next step produces $A'_2=\As$.

Second, the matrix $A'$ is conjugate to the matrix $A^*$
obtained from $A'$ by (i)  replacing
in block row 1
the blocks $A_j$,
 $2\leq j\leq k$, with the identity
block $I_n$ and (ii) replacing the  $I$ blocks in
block column 1 with $ A_2, \dots , A_k$ (with
$A_j$ appearing above $A_{j+1}$, $1\leq i < k$).
 An SSE from $A'$ to $A^*$ is achieved
by a string of diagonal refactorizations of the blocks
$A_j$. For example, in the display for  $k=4$,
let $X$ be the matrix obtained from
$A'$ by replacing the $A_2$ block with $I$. Let $D$
be the block
diagonal matrix with block indices  matching those of $A'$,
and with $D=A_2$ in the second diagonal block and $D=I$ otherwise.
Then $XD=A'$ and $DX$ has $A_2$ occupying the $2,1$ block as
desired. To move $A_j$ to its target position
in the  first  block column
takes $j-1$
  moves of this type.

Third and last, the matrix $A^*$ is SSE over $\rr$ to the
matrix $A^{\text{col}}$ by a string of block column amalgamations,
just as $A'$ is SSE over $\rr$ to $\As$ by a string of block
row amalgamations.

This finishes the proof of the lemma.
\end{proof}

We record a corollary of Theorem \ref{finecentral}.
%The following corollary follows easily from the above, and is useful enough to state on its own.
\begin{corollary} \label{sseandel}
Suppose $\Rcal$ is a ring, and suppose
$P$ and $Q$ are square
matrices over $\Rcal[t]$. Suppose $A^{\prime}$ and $B^{\prime}$ are
matrices over $\Rcal$ such that $P$ and $Q$ are $\textnormal{El}(\Rcal[t])$ equivalent
  (respectively) to
$I-tA^{\prime}$ and $I-tB^{\prime}$. Then the following are equivalent:
\begin{enumerate}
\item
$A^{\prime}$ and $B^{\prime}$ are SSE over $\Rcal$.
\item
$P$ and $Q$ are $\textnormal{El} (\Rcal)[t]$ equivalent.
\end{enumerate}
\end{corollary}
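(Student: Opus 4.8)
The plan is to obtain this corollary as a purely formal consequence of Theorem~\ref{finecentral}: no new technical ingredient is needed, only the chaining of two equivalences. Throughout, write $\sim$ for the relation appearing in condition~(2), that is, $\textnormal{El}(\Rcal)[t]$-equivalence; this is the relation of $\textnormal{El}(\Rcal[t])$-equivalence between the stabilizations (under the convention $M\mapsto M_{\stabone}$), and it is the same relation occurring in the hypothesis of the corollary and in Theorem~\ref{finecentral}. In particular $\sim$ is an equivalence relation, so symmetric and transitive; those are the only properties of it that the argument uses.

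First I would extract from Theorem~\ref{finecentral} exactly what is needed. That theorem supplies a \emph{bijection} between the set of $\textnormal{El}(\Rcal[t])$-equivalence classes of matrices $I-A$ with $A$ over $t\Rcal[t]$ and the set of SSE-$\Rcal$ classes of square matrices over $\Rcal$, under which the SSE-$\Rcal$ class of a matrix $A'$ over $\Rcal$ corresponds to the $\textnormal{El}(\Rcal[t])$-equivalence class of $I-tA'$. Since a bijection carries equal classes to equal classes and unequal classes to unequal classes, for matrices $A',B'$ over $\Rcal$ this gives
\[
A'\text{ and }B'\text{ are SSE over }\Rcal \quad\Longleftrightarrow\quad I-tA'\sim I-tB'.
\]

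Next I would bring in the hypotheses of the corollary, $P\sim I-tA'$ and $Q\sim I-tB'$. By symmetry and transitivity of $\sim$, it follows that $P\sim Q$ if and only if $I-tA'\sim I-tB'$. Combining this with the displayed equivalence yields precisely $(1)\Leftrightarrow(2)$, which is the assertion of the corollary.

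I expect no genuine obstacle in this argument; all of the work has already been done inside Theorem~\ref{finecentral} (and, through it, Theorem~\ref{trivialkseo} and the Neeman--Ranicki input on the $K$-theory of Cohn localizations). The only point worth a word is bookkeeping: $\textnormal{El}(\cdot)$-equivalence of finite matrices is, by the paper's convention, the corresponding relation between the stabilizations $(\,\cdot\,)_{\stabone}$, so the matrices $P,Q,I-tA',I-tB'$ need not have matching sizes, and the chaining above requires no adjustment on that account.
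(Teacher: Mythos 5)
Your proposal is correct and is exactly the intended argument: the paper records this statement as an immediate corollary of Theorem \ref{finecentral} without writing a proof, and your derivation (the bijection of Theorem \ref{finecentral} gives $A'$ SSE-$\Rcal$ $B'$ iff $I-tA'\sim I-tB'$, and then symmetry and transitivity of $\sim$ transfer this to $P\sim Q$) is precisely the formal chaining that justifies it.
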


%\red{With Corollary \ref{sseandel} and
%Theorem \ref{selisttheorem}, we can interpret
%Theorem \ref{aplusn2} as saying that
%$\text{NK}_1(\Rcal )$ is in bijective correspondence
%with the $\textnormal{El}(R[t])$ equivalence classes contained
%in the $\textnormal{Gl}(\calR[t]) $ equivalence
%}

\section{SSE and $\text{Nil}_0(\calR)$} \label{nilsec}
%\section{Strong shift equivalence and $\text{Nil}_0(R)$}

Nilpotent matrices $N, N'$ over $\calR$
represent the same element of
$\text{Nil}_0(\calR)$ if and only if
$I-tN$ and $I-tN'$ represent the same
element of $NK_1(\calR)$.
It therefore follows from Theorem \ref{finecentral} that
there is another characterization of when
nilpotent matrices $N,N'$
represent the same element of
$\text{Nil}_0(\calR)$:
\begin{theorem} \label{sseandnil}
Suppose $N$ and $N'$ are nilpotent matrices over a ring
$\rr$. Then the following are equivalent.
\begin{enumerate}
\item
$[N]=[N']$ in $\textnormal{Nil}_0(\rr )$.
\item
$N$ and $N'$ are SSE over $\rr$.
\end{enumerate}
\end{theorem}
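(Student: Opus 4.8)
The plan is to deduce the equivalence from Theorem \ref{finecentral} together with the classical isomorphism $NK_{1}(\rr)\cong\nil(\rr)$ recalled in Section \ref{backgroundsec}, using as a bridge the fact that when $N$ is nilpotent the matrix $I-tN$ is a unit over $\rr[t]$.

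First I would invoke Theorem \ref{finecentral} with $A=tN\in t\rr[t]$. Taking $k=1$ in Definition \ref{asharpdefinition} gives $A^{\Box}=N$, and likewise $(tN')^{\Box}=N'$, so under the bijection of Theorem \ref{finecentral} the SSE-$\rr$ class of $N$ corresponds to the $\el(\rr[t])$-equivalence class of $I-tN$ and similarly for $N'$. Hence $N$ and $N'$ are SSE over $\rr$ if and only if $I-tN$ and $I-tN'$ are $\el(\rr[t])$-equivalent.

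Next I would identify $\el(\rr[t])$-equivalence of these particular matrices with equality of classes in $K_{1}(\rr[t])$. That an $\el(\rr[t])$-equivalence $U(I-tN)V=I-tN'$ forces $[I-tN]=[I-tN']$ in $K_{1}(\rr[t])$ is immediate, since $[U]=[V]=0$. Conversely, $I-tN$ and $I-tN'$ are invertible over $\rr[t]$ because $N,N'$ are nilpotent, so if $[I-tN]=[I-tN']$ then $(I-tN')^{-1}(I-tN)$ lies in the commutator subgroup $\el(\rr[t])$ of $\GL(\rr[t])$ by Whitehead's lemma; writing it as $E$ yields the $\el(\rr[t])$-equivalence $E(I-tN)=(I-tN')$. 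The stabilization implicit in our conventions is harmless, since $(I-tN)_{\stabone}=I-t(N\oplus 0)$ with $N\oplus 0$ still nilpotent and representing the same class in $\nil(\rr)$. Finally, under the splitting $K_{1}(\rr[t])\cong K_{1}(\rr)\oplus NK_{1}(\rr)$ induced by $t\mapsto 0$, both $[I-tN]$ and $[I-tN']$ lie in the $NK_{1}(\rr)$ summand, so equality in $K_{1}(\rr[t])$ is the same as equality in $NK_{1}(\rr)$; and the isomorphism $NK_{1}(\rr)\to\nil(\rr)$, $[I-tN]\mapsto[N]$, converts this into $[N]=[N']$ in $\nil(\rr)$. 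Chaining these equivalences gives the theorem.

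All of this is routine. The only point requiring a little care --- the ``hard part'', such as it is --- is the middle step: matching $\el(\rr[t])$-equivalence of the specific units $I-tN$ and $I-tN'$ with equality of their $K_{1}$-classes, and checking that the stabilizations built into both Theorem \ref{finecentral} and the definition of $\nil(\rr)$ are compatible with this matching.
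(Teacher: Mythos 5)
Your proof is correct and follows essentially the same route as the paper's: combine Theorem \ref{finecentral} with the classical isomorphism $\nil(\rr)\cong NK_1(\rr)$, supplying explicitly the Whitehead-lemma bridge (between $\el(\rr[t])$-equivalence of the units $I-tN$, $I-tN'$ and equality of their classes in $K_1(\rr[t])$, restricted to $NK_1(\rr)$) that the paper leaves implicit. One small algebraic slip: from $(I-tN')^{-1}(I-tN)=E$ you should conclude $(I-tN)E^{-1}=I-tN'$ rather than $E(I-tN)=I-tN'$; the conclusion that the two matrices are $\el(\rr[t])$-equivalent is unaffected.
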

(There is also a shorter proof of Theorem \ref{sseandnil}, avoiding
  Theorem \ref{finecentral}, which we  forego.) Consequently,
  we can think of Theorem \ref{finecentral} as a generalization
  of the correspondence $\textnormal{Nil}_0(\rr )\to \nk_1(\rr )$,
  from nilpotent matrices to arbitrary matrices.

\begin{remark}
{\normalfont
Theorem \ref{finecentral} is an alternate ingredient
for a proof that
$\text{Nil}_0(\calR)$ and $NK_1(\calR)$ are isomorphic.
If the matrix $A$ in $\mathcal M (\rr )$
is nilpotent, then the map $\beta \colon A\mapsto I-tA$ is the
standard map inducing the group isomorphism
$\text{Nil}_0(\calR)\to \text{NK}_1(\rr )$. It is straightforward
to check that $\beta$ induces a well defined homomorphism
$\text{Nil}_0(\calR)\to \text{NK}_1(\rr )$, which is surjective
on account of the Higman trick (see \cite{WeibelBook} Proposition 3.5.3, or \cite{Rosenberg1994} Theorem 3.2.22).
The more difficult part of the proof is to show that this
epimorphism is injective. For example, Weibel proves this
with a sophisticated composition of maps
(see \cite{WeibelBook}, Section III.3.5).
Rosenberg approaches this by defining a map inducing
the inverse, but (he agrees that) the proof
\cite[p.150]{Rosenberg1994}
that the map  is
well defined is incomplete.
The map of Theorem
\ref{finecentral} restricts to define
an inverse to the standard epimorphism
$\text{Nil}_0(\calR)\to \text{NK}_1(\rr )$, and
therefore gives an alternate proof for this step,
in the spirit of Rosenberg's approach.
It also identifies the elements of
$\text{Nil}_0(\rr)$ as SSE-$\rr$ classes. } \end{remark}

\bibliographystyle{plain}

\bibliography{mbssbib}

\end{document}